\newcommand{\Zz}{\mathbb{Z}}
\newcommand{\Cc}{\mathbb{C}}
\newcommand{\Pp}{\mathbb{P}}
\newcommand{\Rr}{\mathbb{R}}
\newcommand{\Qq}{\mathbb{Q}}
\newcommand{\xx}{\mathbf{x}}
\newcommand{\Conv}{\operatorname{Conv}}
\newcommand{\Ann}{\operatorname{Ann}}
\newcommand{\Hom}{\operatorname{Hom}}
\newcommand{\rk}{\operatorname{rank}}
\newcommand{\la}{\langle}
\newcommand{\ra}{\rangle}
\newcommand{\Aa}{\mathcal{A}}
\newcommand{\Bb}{\mathcal{B}}
\newcommand{\Oo}{\mathcal{O}}
\newcommand{\Ee}{\mathcal{E}}
\newcommand{\Ff}{\mathcal{F}}
\newcommand{\Hh}{\mathcal{H}}
\newcommand{\Ll}{\mathcal{L}}
\newcommand{\Xx}{\mathcal{X}}
\newcommand{\Ss}{\mathcal{S}}
\newcommand{\Yy}{\mathcal{Y}}
\newcommand{\DT}{D((T_{i,j})_{i,j})}
\newtheorem{theorem}{Theorem}[section]
\newtheorem{lemma}[theorem]{Lemma}
\newtheorem{proposition}[theorem]{Proposition}
\newtheorem{definition}[theorem]{Definition}
\newtheorem{corollary}[theorem]{Corollary}
\newtheorem{remark}[theorem]{Remark}
\newtheorem{conjecture}[theorem]{Conjecture}
\numberwithin{equation}{section}
\begin{document}
\title{On Clifford  double mirrors of toric complete intersections}

\begin{abstract}
We present a construction of noncommutative double mirrors to
complete intersections in toric varieties. This construction unifies
existing sporadic examples and explains the underlying combinatorial
and physical reasons for their existence.
\end{abstract}

\author{Lev A. Borisov}
\address{Department of Mathematics\\
Rutgers University\\
Piscataway, NJ 08854} \email{borisov@math.rutgers.edu}

\author{Zhan Li}
\address{Beijing International Center for Mathematical Research\\
Beijing 100871, China} \email{lizhan@math.pku.edu.cn}

\maketitle

\tableofcontents

\section{Introduction.}\label{section1}
Two Calabi-Yau varieties $X$ and $Y$ are called a mirror symmetric pair if, together with some
K\"ahler data, they give rise to two superconformal field theories that differ by a twist, see \cite{CK99}. While this string-theoretic statement can not be at present rigorously understood, some
of its
consequences can be stated and even proved mathematically. For example, the (stringy)
Hodge numbers of the mirror pair $X$ and $Y$ are expected to obey
the relation $h^{p,q}(X) = h^{\dim Y -p, q}(Y)$. This so-called mirror duality test of an alleged mirror pair connects rather accessible invariants of $X$ and $Y$ and is the easiest to verify.
A significantly more complicated test of mirror symmetry connects quantum cohomology of $X$
with period integrals of $Y$. Even in the simple case when $X$ is a smooth quintic hypersurface
in $\Pp^4$, this is a highly nontrivial result due to Givental \cite{Givental}, which was later clarified
by Lian, Liu and Yau
in \cite{LLY}.
When one considers surfaces with boundary (open strings), then homological mirror symmetry \cite{Kon94} predicts that
the bounded derived category of coherent sheaves on $X$ is  equivalent to the Fukaya
category of $Y$ with the appropriate symplectic structure.

\medskip
An arbitrary Calabi-Yau variety may not always possess a mirror,
moreover, even if a mirror exists, it may not be unique. In fact, it is common for a Calabi-Yau
variety $Y$ to possess multiple mirror partners $X_i$. In physics terms the expectation
is that the superconformal field theories associated to $X_i$ are obtained from each other
by some kind of analytic continuation along the parameter space of such theories.
In this case, it is reasonable to refer to $X_1$ and $X_2$ as double mirrors of each
other in the sense that $X_2$ is a mirror of a mirror of $X_1$ and vice versa.
Even more generally, we will say that $X_1$ and $X_2$ are double mirror to each other if
they pass some basic compatibility tests below.\footnote{As such, we don't require that their mirror
exists in any sense.}

\begin{itemize}
\item
Hodge numbers of $X_1$ and $X_2$ coincide.

\item
Complex moduli spaces of $X_1$ and $X_2$ coincide.

\item
Quantum cohomology of $X_1$ and $X_2$ are obtained from each other by analytic continuation.

\item
Bounded derived categories of coherent sheaves on $X_1$ and $X_2$ coincide
(under some identification of the aforementioned complex moduli spaces).

\end{itemize}

Common examples of such $X_1$ and $X_2$ differ from each other by flops or more generally
by $K$-equivalences. In this case the last two statements are known as Ruan's and Kawamata's conjectures respectively.

\medskip
There are other prominent examples of double mirror Calabi-Yau varieties, such as
the Pfaffian-Grassmannian example, where $X_1$ and $X_2$ are not birational. In addition, it is natural to move a little bit beyond the category of algebraic varieties to allow Deligne-Mumford stacks, as well as mildly noncommutative ``varieties".
\footnote{While we do not wish to be explicit in this definition, by a mildly noncommutative
variety we mean a sheaf of finite rank algebras over a usual variety or stack.}
It is the latter kind of varieties that are the subject of this paper.

\medskip
There is a number of results in the literature where some usual or noncommutative varieties
$X_1$ and $X_2$ satisfy $D^b(Coh-X_1)=D^b(Coh-X_2)$ in the sense of equivalence of
categories. It is reasonable to postulate that most if not all such examples should be viewed as particular cases of the double mirror phenomenon.\footnote{As far as we know, there is no systematic study of nonlinear sigma models with noncommutative targets in the physics literature. Perhaps this paper may provide a motivation for it.} One such example is a construction
 of Kuznetsov, who shows that the derived category of a Calabi-Yau complete intersection
 of $k$ quadrics in $\Cc\Pp^{2k-1}$ is derived equivalent to a certain noncommutative crepant resolution of the double cover of $\Cc\Pp^{k-1}$ ramified over a determinant of a symmetric
 $2k\times 2k$ matrix of linear forms, twisted by a Brauer class. More precisely, this
 variety can be viewed as a certain sheaf of even Clifford algebras over $\Cc\Pp^{k-1}$.
A related example has been also considered by Calabrese and Thomas \cite{CT14}.

\medskip
The goal of this paper is to uncover the toric geometry that
underlies Kuznetsov's and Calabrese-Thomas'
constructions. This more general
framework allows us to construct additional examples and leads to
the more conceptual understanding of these derived equivalences.

\begin{remark}
We should also point out that there are examples of derived equivalences between noncommutative varieties which are not covered by our construction. For instance, C\u ald\u araru studied the derived category of elliptic fibration and the twisted derived category of its relative Jacobian \cite{Cal00a, Cal00b}. This is a family version of classical derived equivalence between abelian varieties. The twists are used to glue  the universal object in the Fourier-Mukai transform which may not exist in the ordinary sense. There are also Hosono-Takagi examples \cite{HT14},
which are closer in spirit to this paper, since they involve quadric fibrations. However, they
appear to be non-toric in nature and thus not covered by our construction.
\end{remark}

\medskip
Our construction starts off with a pair of dual reflexive Gorenstein cones $(K, K^\vee)$
in dual lattices $M$ and $N$. These are dual cones in the usual sense, with the property
that lattice generators of rays of $K$ and $K^\vee$ lie in the hyperplanes $\la -,\deg^\vee\ra=1$
and $\la \deg,- \ra=1$ respectively, where $\deg$ and $\deg^\vee$ are (uniquely defined)
lattice elements of $K$ and $K^\vee$. We consider
decompositions of the degree element $\deg^\vee$ in $K^\vee$ under certain appropriate conditions. We associate noncommutative varieties to such decompositions, and the change of the decomposition   results in conjectural double mirrors.

\medskip
To make this a bit more precise, suppose we have
\begin{equation}\label{decintro}
\deg^\vee = \frac 1 2 (s_1 + \cdots + s_{2r}) + t_1 + \cdots + t_{k-r}
\end{equation}
where $s_i$ and $t_j$ are linearly independent lattice elements of $K^\vee$
which satisfy $\la \deg,s_i\ra =\la \deg,t_j\ra=1$. The index $k=\la \deg,\deg^\vee\ra$
 of reflexive Gorenstein pair $(K,K^\vee)$ is fixed, but $r$ could vary.
We construct a stack $\Ss$ which is a complete intersection in a
toric stack by equations associated to $\{t_i\}$.
Then $\{s_i\}$ give a sheaf of even Clifford
algebras $\Bb_0$, and they combine to give us a
noncommutative variety $(\Ss,\Bb_0)$. \footnote{For our construction
to work best, we need some additional technical centrality condition
on a fan $\Sigma$ in $K^\vee$ and a certain
flatness assumption.}

\medskip
Two extreme cases are of particular importance: when $r=0$, there
are no {$\{s_i\}$} in the expression of $\deg^\vee$
and $\Bb_0 = \Oo$. The noncommutative stacks $(\Ss, \Bb_0)$ are just
usual DM stacks $\Ss$ which are crepant resolutions of Calabi-Yau
complete intersections in toric Gorenstein Fano varieties. If there
is a mirror, then one gets the classical Batyrev-Borisov
construction. On the other end of the spectrum, when $r=k$, there
are no {$\{t_i\}$} in the expression of $\deg^\vee$.
In this case $\Ss$ is a toric DM stack.

\medskip
The main result of the paper is the following theorem.

\medskip
\noindent
\begin{theorem}{\bf (= Theorem \ref{dmthm})}
Suppose that a complete intersection $\Xx$ and a Clifford noncommutative variety $\Yy$
are given by different decompositions of the degree element $\deg^\vee$ of a reflexive Gorenstein
cone $K^\vee$ and the appropriate regular simplicial fans in $K^\vee$. Then the bounded derived categories of $\Xx$ and $\Yy$ are equivalent, provided the centrality and the flatness assumptions on $\Yy$ hold.
\end{theorem}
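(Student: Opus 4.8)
The plan is to reduce the theorem to a single ``elementary move'' on the decomposition of $\deg^\vee$ and then to prove the derived equivalence for that move by realizing both sides as phases of one gauged linear sigma model, using the fibered Kn\"orrer/Clifford picture as the geometric content and a window (grade-restriction) argument as the engine. First I would dispose of the easy part: if the decomposition is held fixed but the regular simplicial fan in $K^\vee$ is changed, the two resulting stacks differ by a sequence of toric flops, hence are derived equivalent by the known derived equivalences for toric varieties and DM stacks (Kawamata-type statements in the toric setting). So the essential content is the change of decomposition. Next I would show that any two admissible decompositions are linked by a finite chain of moves in which the parameter $r$ changes by exactly one — passing from $r$ to $r+1$ means picking one $t_j$ in the current decomposition, writing $t_j=\tfrac12(s+s')$ for a pair of linearly independent lattice points $s,s'\in K^\vee$ with $\la\deg,s\ra=\la\deg,s'\ra=1$, and trading the hypersurface equation attached to $t_j$ for the rank-two Clifford data attached to $\{s,s'\}$, the reverse move also being allowed. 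By transitivity of derived equivalence it then suffices to treat one such move, i.e. to compare $\Xx=(\Ss,\Bb_0)$ and $\Yy=(\Ss',\Bb_0')$ differing only in whether the datum $t_j$ is carried as a defining equation or folded into the Clifford algebra.

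For a fixed elementary move, both $\Xx$ and $\Yy$ sit over a common stack $\Bb$: the complete intersection cut out by the $t$'s shared between the two decompositions, equipped with the sheaf of even Clifford algebras coming from the shared $s$'s. Relative to $\Bb$, the variety $\Xx$ is a crepant resolution of the hypersurface defined by the remaining quadratic form $q$ (together with the shared Clifford structure), while $\Yy$ is obtained by instead passing $q$ into the Clifford algebra. Over a point of $\Bb$ this is exactly the local statement underlying Kuznetsov's construction and the Calabrese--Thomas variant \cite{CT14}: the $\Zz/2$-graded category of matrix factorizations of a quadratic form — equivalently the category of maximal Cohen--Macaulay modules over the associated quadric — is equivalent to the category of modules over the even Clifford algebra of that form, a Kn\"orrer-type periodicity. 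I would package this as an exact functor realized by a Fourier--Mukai-type kernel built from the universal Clifford module on $\Xx\times_\Bb\Yy$.

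The heart of the argument is to promote this fiberwise equivalence to an equivalence of the global derived categories, and here the toric combinatorics does the work. Using the Cox ring of the toric stack attached to the fan in $K^\vee$ together with the explicit weights carried by the $s_i$ and $t_j$, one writes both $\Xx$ and $\Yy$ as geometric phases of a single toric stack $\mathfrak{X}$ with superpotential $W$; the two sides correspond to two chambers in the secondary fan of $\mathfrak{X}$. The $\Zz/2$-graded category of matrix factorizations of $(\mathfrak{X},W)$ carries a ``window'' subcategory that maps to each phase, and the reflexive Gorenstein condition on $K^\vee$ — which forces the Calabi--Yau normalization and makes the weight of $W$ match the anticanonical class — is precisely what guarantees that this window maps onto \emph{both} phase categories as an equivalence rather than as a strict semiorthogonal inclusion. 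The \emph{centrality} assumption on $\Sigma$ is what makes the sheaf of even Clifford algebras and the universal Clifford module exist globally over $\Bb$: it supplies the consistent ``square root'' of the line bundle needed to define the quadric fibration coherently. The \emph{flatness} assumption guarantees that $\Ss$ has the expected dimension, so that the fiberwise Cohen--Macaulay / matrix-factorization computations glue with no correction terms. Feeding the $\Bb$-relative Kn\"orrer kernel into the window comparison then yields $D^b(\mathrm{Coh}\text{-}\Xx)\simeq D^b(\mathrm{Coh}\text{-}\Yy)$.

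The step I expect to be the main obstacle is exactly this globalization. Checking that the local Kn\"orrer/Kuznetsov equivalences assemble into a single kernel, and — more delicately — that the window subcategory has the same ``width'' on the two sides, requires a careful combinatorial analysis of the secondary-fan geometry of $\mathfrak{X}$ and of how the decomposition data of $\deg^\vee$ controls the GIT chambers and the grade-restriction rule. Identifying the precise point at which centrality and flatness enter that analysis, and verifying that they are exactly the hypotheses needed for the window comparison to be an equivalence (and not merely a fully faithful embedding), is where the bulk of the technical work lies. The two extreme cases $r=0$ and $r=k$ provide useful sanity checks along the way: the former should specialize to the Batyrev--Borisov picture and the latter to a purely toric statement.
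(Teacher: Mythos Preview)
Your overall instinct to route the comparison through a gauged-linear-sigma-model / matrix-factorization category is correct and is exactly what the paper does, but the scaffolding you build around it introduces an unnecessary detour and a genuine gap.

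\medskip
\textbf{The paper's approach.} The paper does not decompose into elementary moves at all. Instead it proves, once and for all, that for \emph{any} regular simplicial fan $\Sigma$ in $K^\vee$ the category $D_B(K,c;\Sigma)$ of $\hat G$-equivariant matrix factorizations of the potential $C$ on $U_\Sigma$ is independent of $\Sigma$ (this is the BFK wall-crossing statement you allude to). Then it shows separately that (i) for a complete-intersection decomposition and a fan satisfying \eqref{central}, $D_B(K,c;\Sigma)\simeq D^b(\Xx)$ by Isik--Shipman, and (ii) for a Clifford decomposition and a fan satisfying \eqref{central2}, $D_B(K,c;\Sigma)\simeq D^b(\Ss,\Bb_0)$. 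Step (ii) is done \emph{globally} in one shot: one compares two semiorthogonal decompositions of $D^b(\Xx_{quad})$ for the quadric fibration $\Xx_{quad}\to\Ss$, one coming from Kuznetsov's theorem (whose orthogonal complement is $D^b(\Ss,\Bb_0)$) and one from the relative Orlov theorem of \cite{BDFIK14} (whose orthogonal complement is $D_B(K,c;\Sigma)$). The ``junk'' pieces $\pi^*D^b(\Ss)\otimes\Oo(i)$ match, so the complements are equivalent. Flatness is needed for Kuznetsov's theorem to apply; centrality is what makes $U_\Sigma\to U_{\overline\Sigma}$ a vector bundle so that one has a quadric fibration at all.

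\medskip
\textbf{The gap in your plan.} Your reduction to elementary $r\to r+1$ moves requires, at each step, writing some $t_j=\tfrac12(s+s')$ with $s,s'\in K^\vee_{(1)}$ linearly independent from the other data. Nothing in the hypotheses guarantees such $s,s'$ exist; the two decompositions in the theorem are just two unrelated presentations of $\deg^\vee$, and there is no reason the individual $t_j$ on the complete-intersection side admit such splittings. Worse, even if the chain of moves existed, each intermediate stage would be a mixed $(0<r<k)$ decomposition, and the derived equivalence for those is only stated as a \emph{conjecture} in the paper (Section~\ref{general}); your plan would require proving that conjecture at every step. The paper sidesteps all of this by never touching intermediate decompositions: it compares $\Xx$ and $\Yy$ each directly to the fan-independent category $D_B(K,c)$.

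\medskip
\textbf{On the globalization worry.} Your anticipated ``main obstacle'' --- gluing fiberwise Kn\"orrer equivalences --- is also avoided in the paper. There is no fiberwise-then-glue step: Kuznetsov's semiorthogonal decomposition and the relative Orlov theorem are already global statements over the base, so the equivalence $D^b(\Ss,\Bb_0)\simeq D_B(K,c;\Sigma)$ drops out from comparing left and right orthogonals of the same admissible subcategory.
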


\medskip
\medskip
For the benefit of the reader, we try to keep the paper as self-contained as possible.
It is structured as follows.

\medskip
In Section \ref{section2} we review the definition of reflexive
Gorenstein cones and set up some of the {notations}
that recur throughout the paper. We give a quick
introduction to Batyrev-Borisov construction from the viewpoint of
the pairs of reflexive Gorenstein cones $(K,K^\vee)$. This is a
slightly different approach to the subject than the more traditional
study of nef-partitions as in \cite{Bor93}. We find it more natural both in its own
right and for the purposes of this paper. In fact, we only mention
nef-partitions in passing remarks.

\medskip
In Section \ref{philosophy} we outline the physical intuition that guides this paper and is key to
the proper understanding of the construction and its possible generalizations. We hope that even readers not interested in the more technical details of the rest of the paper will read this section.
We argue that the principal category of interest is the graded equivariant derived category of factorizations of  the potential in the homogeneous coordinates, considered in \cite{BFK12}.
It is defined for every regular simplicial fan in $K^\vee$ but is unique up to equivalence.

\medskip
In Section \ref{K.recall} we recall the construction of Kuznetsov of Clifford double mirrors of
complete intersections of quadrics in  $\Cc\Pp^n$. This section serves a dual purpose. On the one
hand, we introduce the key example that served as the original motivation behind this paper.
On the other hand, we introduce (even) Clifford algebras which will be used further in the paper.

\medskip
In Section \ref{defcliff} we consider the decompositions  \eqref{decintro} with $r=k$. We construct
sheaves of Clifford algebras over toric DM stacks which generalize Kuznetsov's construction and open the door to many more examples of the phenomenon. In particular, we describe a double mirror to the quotient of complete intersection of four quadrics in $\Cc\Pp^7$ by a fixed point free involution.

\medskip
In Section \ref{sectionderived} we prove the first case of our main result, namely the equivalence of derived categories of Clifford double mirrors to the graded equivariant derived categories of factorizations from Section \ref{philosophy}. Our argument is based heavily on
the work of \cite{BFK12}.

\medskip
In Section \ref{general} we generalize the construction of Section \ref{defcliff} to the $0<r<k$
case of \eqref{decintro}. We also conjecture that the derived equivalence statement of Section \ref{sectionderived} extends to this more general case.

\medskip
In Section \ref{sectioncomb} we discuss in detail the combinatorics of Clifford double mirrors.
It is likely to be useful in further study of the phenomenon.

\medskip
In Section \ref{sectionex} we describe several additional examples
of the construction. Some of them such as Calabrese-Thomas'
example \cite{CT14}  already appear in
the literature, and the others are new. We specifically look at what
happens if some of the assumptions of the main theorem are relaxed.

\medskip
Finally, in Section \ref{sectionrem} we make some concluding remarks and pose open questions
that we hope the readers or the authors will address in future research.

\medskip
\noindent{\it Acknowledgements.} \ We thank Nicolas Addington, Matt Ballard, Tyler Kelly, Alexander Kuznetsov and Howard Nuer for stimulating conversations and useful comments.
L. Borisov was partially supported by NSF grant DMS-1201466.

\section{Review of reflexive Gorenstein cones, Batyrev-Borisov mirror construction and double mirror phenomenon.}\label{section2}
In this section we give an overview of Batyrev-Borisov mirror construction with the emphasis on reflexive Gorenstein
cones. This viewpoint is essential for understanding the rest of the paper. So even a reader familiar with Batyrev-Borisov
construction in terms of nef-partitions will find it necessary to at least briefly look through this section. We describe the
crucial work of Batyrev and Nill \cite{BN08} which forms the basis for the double mirror phenomenon in the Batyrev-Borisov setting.
In the process we fix the notations and explain the way they are used throughout the paper.

\subsection{Reflexive Gorenstein cones.}\label{subsection: Reflexive Gorenstein cones}

Let $M \cong \Zz^{\rk M}$ be a lattice and let $N:=\Hom_\Zz(M, \Zz)$ be its dual. The natural pairing is given by
$$\la , \ra : M \times N \to \Zz.$$
Let $M_\Rr:= M \otimes_\Zz \Rr, N_\Rr:= N \otimes_\Zz \Rr$ be the $\Rr$-linear extensions of $M, N$. The pairing can be $\Rr$-linearly extended as well, and we still use $\la , \ra$ to denote this extension.

\begin{definition}
A rational polyhedral cone $K \subset M_\Rr$ is a convex cone generated by a finite set of lattice points. We assume that $K\cap (-K)=\{0\}$.
We call the first lattice point of a ray $\rho$  of $K$ a primitive element or a lattice generator of $\rho$.
\end{definition}

\begin{definition}(\cite{BB94})
A full-dimensional rational polyhedral cone $K \subset M_\Rr$ is called a \emph{Gorenstein} cone if all the primitive elements of its rays lie on some hyperplane $\la - , n \ra =1$ for some \emph{degree element} $n$ in $N$.
A Gorenstein cone $K \subset M_\Rr$ is called a \emph{reflexive} Gorenstein cone iff its dual cone $K^\vee: =\{y \mid \la x, y \ra
\geq 0 ~\forall~ x \in K\}$ is also a Gorenstein cone with respect to the dual lattice $N$.
\end{definition}

\begin{remark}
Note that $(K^\vee)^\vee=K$, which is why we typically talk about a pair of dual reflexive
Gorenstein cones $K$ and $K^\vee$. For any such pair
the degree elements are unique and are denoted by $\deg^\vee$ and $\deg$ respectively.
Observe that $\deg^\vee$ is a lattice point in the interior $(K^\vee)^\circ$ of
$K^\vee$ and there holds
$$
(K^\vee)^\circ\cap N = \deg^\vee+ (K^\vee\cap N).
$$
Similarly $K^\circ\cap M=\deg+(K\cap M)$.
\end{remark}

\begin{definition}
For a pair of dual reflexive Gorenstein cones $(K,K^\vee)$, the pairing of their two degree elements
$\la \deg, \deg^\vee \ra = k$ is called the \emph{index} of the pair. The index is
always a positive integer.
\end{definition}

The following is an example of a $3$-dimensional reflexive Gorenstein cone and its dual cone. Notice that two degree elements $\deg, \deg^\vee$ happen to lie on  the hyperplanes $\la - , \deg^\vee \ra =1$ and $\la \deg, -\ra=1$ which may not be the case in general. Indeed, this will never happen as soon as the index of the pair of Gorenstein cones is larger than $1$.
\[\begin{array}{cc}
\begin{tikzpicture}[scale=1.2]
\draw (0,1) -- (1.6, 1);
\draw (-0.4,0) -- (1.2, 0);
\draw (0, 1) -- (-0.4,0);
\draw (1.6, 1) -- (1.2, 0);
\draw[dashed] (-0.075, 1.625) -- (0.3,-1.5);
\draw[dashed] (1.925, 1.625) -- (0.3,-1.5);
\draw (-0.575, 0.425) -- (0.3,-1.5);
\draw (1.425,0.425) --(0.3,-1.5);

%\draw[fill] (.6,0.5) circle [radius=.04];
\draw[fill] (.6,0.5) circle (.04);
\node[below] at (.6,0.5) {\scriptsize $\deg$};

\draw[fill] (1.2, 0)  circle (.04);
\node[right] at (1.2, 0) {\scriptsize$ (1,1,-1)$};
\draw[fill] (1.6, 1)  circle (.04);
\node[above] at (1.6, 1) {\scriptsize$(1,1,1)$};
\draw[fill] (0, 1)  circle (.04);
\node[above left] at (0, 1) {\scriptsize$(1,-1,1)$};
\draw[fill] (-0.4, 0)  circle (.04);
\node[above left] at (-0.4, 0) {\scriptsize$(1,-1,-1)$};
\draw[fill] (0.3,-1.5)  circle (0.04);
\node[right] at (0.3,-1.5) {\scriptsize$(0,0,0)$};
\node[below] at (0.3,-1.6) {Gorenstein cone $K$.};

\draw [->, dashed]  (.4, 1.5) -- (.5,.8);
\node[above] at (.4, 1.5) {\scriptsize$\la -, \deg^\vee \ra =1$};
\end{tikzpicture}

&

\begin{tikzpicture}[scale=2.5]
\draw (0.5,1) -- (.2, 1.2);
\draw (.2, 1.2) -- (-.5,1);
\draw (-.5,1) -- (-0.2,.8);
\draw (-0.2,.8) -- (.5, 1);

\draw (0,0) -- (.625, 1.25);
\draw[dashed] (0,0) -- (.25, 1.5);
\draw (0,0) -- (-.625, 1.25);
\draw (0,0) -- (-.25, 1);

\draw[fill] (0,1) circle (0.02);
\node[right] at (0,1) {\scriptsize$\deg^\vee$};

\draw[fill] (0.5,1) circle (0.02);
\node[above right] at (0.5,.9) {\scriptsize$(1,1,0)$};
\draw[fill] (0.2,1.2) circle (0.02);
\node[above right] at (0.2,1.1) {\scriptsize$(1,0,1)$};
\draw[fill] (-0.5,1) circle (0.02);
\node[above left] at (-0.5,1) {\scriptsize$(1,-1,0)$};
\draw[fill] (-.2,.8) circle (0.02);
\node[left] at (-.2,.8) {\scriptsize$(1,0,-1)$};
\draw[fill] (0,0) circle (0.02);
\node[right] at (0,0) {\scriptsize$(0,0,0)$};
\node[below] at (0.0,-.1) {The dual Gorenstein cone $K^\vee$.};

\draw [->, dashed]  (-.1, 1.3) -- (-0.07, 1.05);
\node[above] at (-.1, 1.3) {\scriptsize$\la \deg, - \ra =1$};
\end{tikzpicture}
\end{array}\] We will revisit this example in Section \ref{subsection: a first non-trivial example}.

\begin{remark}
If a pair of Gorenstein cones leads to a complete intersection in a toric variety, the index $k$
of the pair is the codimension of the complete intersection. The dimension of the complete intersection is
$d-2k$ where $d={\rk M}={\rk N}$. In particular, the original Batyrev's hypersurface construction corresponds to
$k=1$, as is the case in the above figure. In this paper we are primarily interested in the case $k>1$, so the reader
should not rely too much on their knowledge of the original Batyrev's hypersurface construction.
\end{remark}

Given a pair of reflexive Gorenstein  cones $(K, K^\vee)$ we define two lattice polytopes
$$
\Delta =\{x\in K, \la x,\deg^\vee\ra = 1\},~~
\Delta^\vee =\{y\in K^\vee, \la \deg, y\ra = 1\}.
$$
Their sets of lattice points are denoted by
$$
K_{(1)}:=\Delta\cap M,~~K^\vee_{(1)}:=\Delta^\vee\cap N
$$
respectively.
When the index $k$ is one, these are the original reflexive polytopes of Batyrev \cite{Bat94}. For $k>1$, these polytopes
have no interior lattice points, although they can and often do have non-vertex lattice points on the boundary.

\medskip
A crucial part of the data necessary to define Calabi-Yau varieties (or more generally triangulated
categories of Calabi-Yau type) in the setting of Gorenstein reflexive cones, is a family of \emph{coefficient functions}
$$
c:K_{(1)}\to \Cc.
$$
We typically fix an element of this family in general position.

\subsection{Reflexive cones to complete intersections.}
The original Borisov's extension of Batyrev's construction was accomplished by the use of nef-partitions.
However, a more flexible and conceptually superior way of looking at the construction has been later provided
by the work of Batyrev and Nill \cite{BN08}. This new approach allows for a very clear way of constructing double mirrors
in the Batyrev-Borisov setting. While we follow the idea of Batyrev and Nill, the exposition below is different
from their paper. Indeed, we are trying to set up the viewpoint that will naturally extend to our noncommutative
double mirror setting.

\medskip
The main idea  of Batyrev-Nill's paper is the following concept of the decomposition of the dual degree element.
\begin{definition}
Let $(K\subset M_\Rr,K^\vee\subset N_\Rr)$ be a pair of dual reflexive Gorenstein cones of index $k$.
We call
$$
\deg^\vee =t_1+\cdots + t_k
$$
a decomposition of $\deg^\vee$ if all $t_i$ are elements  of $K^\vee_{(1)}$.
\footnote{It is easy to show that these $t_i$ are linearly independent, so it does not need to be a part of the definition.}
\end{definition}

Given a decomposition of $\deg^\vee$, one can construct a singular toric variety $\mathbb P_{sing}$ and
a  family of Calabi-Yau complete intersections in it. We will later describe how one can construct
a Deligne-Mumford stack crepant resolution  of this family.
\begin{definition}
Since $t_i\in K^\vee\cap N$, the pairing with $t_i$ provides the semigroup ring
$\Cc[K\cap M]$ with a $(\Zz_{\geq 0})^k$ grading. Indeed, a monomial associated to $m$
will have grading $(\la m,t_1\ra,\ldots,\la m,t_k\ra)$. One can then define the multi-Proj of this ring
the same way one defines a usual ${\rm Proj}$ to get
$$
\mathbb P_{sing}:= {\rm multiProj}(\Cc[K\cap M]).
$$
\end{definition}

\begin{remark}
We will later see a more conventional definition of this toric variety $\mathbb P_{sing}$, which does not use the multi-Proj construction but is a bit less intuitive.
\end{remark}

The decomposition $\deg^\vee = \sum_{i=1}^k t_i$ provides a decomposition of the set of lattice points of $\Delta$ into
a disjoint union of the sets
$$
K_{(1)} =\bigsqcup_{i=1}^k K_{(1),i},~K_{(1),i} = \{x\in K_{(1),i},
\la x, t_j\ra = \delta_{i,j} \}
$$
where $\delta_{i,j}$ is the Kronecker delta. Importantly,
these sets $K_{(1),i}$ are the sets of lattice points of lattice
polytopes $\Delta_i$ in $M$, which are faces of $\Delta$. A generic
coefficient function $c:K_{(1)}\to \Cc$ now allows one to define $k$
homogeneous elements of $\Cc[K\cap M]$
$$
\sum_{m\in K_{(1),1}} c(m) [m], \ldots, \sum_{m\in K_{(1),k}} c(m) [m],
$$
where $[m]$ is the monomial element of $\Cc[K\cap M]$ that corresponds to $m$.

\begin{definition}
For a generic choice of $c$, we define the complete intersection $X_{c,sing}\subset\mathbb P_{sing}$ by
$$
X_{c,sing} := {\rm multiProj}(\Cc[K\cap M]\slash\la \sum_{m\in K_{(1),1}} c(m) [m], \ldots, \sum_{m\in K_{(1),k}} c(m) [m]\ra).
$$
\end{definition}

\begin{remark}
In the absence of a decomposition of $\deg^\vee$, one may only consider the usual
$$
H={\rm Proj}(\Cc[K\cap M]\slash\la \sum_{m\in\Delta} c(m) [m]\ra)\subset
{\rm Proj}(\Cc[K\cap M])
$$
which is a hypersurface in a toric variety. A choice of a decomposition allows one to realize the above
hypersurface as a so-called Cayley hypersurface of a complete intersection. There are also situations where
a decomposition of $\deg^\vee$ does not exist, as in the example of \cite{BB94}. In these cases one
can try to work with the hypersurface $H$ as if it were a Cayley hypersurface of a complete intersection, even though no such complete intersection is available.
\end{remark}

Let us now describe $\mathbb P_{sing}$ in a more traditional way, which will also allow us to talk
about its desingularizations.

\medskip
Observe that the sublattice of $N$ given by $\Zz t_1+\ldots +\Zz t_k$ is
of rank $k$ and is saturated. Indeed, if a set $K_{(1),i}$ were empty for some $i$, it would mean that
all elements of $K_{(1)}$ had zero pairing with $t_i$, in contradiction to $K$ being full-dimensional. Thus, there are
elements in each $K_{(1),i}$ and a pairing with them provides a splitting to the natural map $\Zz^k \to N$ given
by $t_i$. We define the quotient lattice $\overline N$ of $N$ and a sublattice $\overline M$ of $M$ by
$$
\overline N := N/(\Zz t_1+\ldots +\Zz t_k),~~\overline M:={\rm Ann}(\Zz t_1+\ldots +\Zz t_k).
$$
Note that
$\overline M$ and $\overline N$ are naturally dual to each other.
The image of the polytope $\Delta^\vee$ under the quotient map
$\phi:N_\Rr\to \overline N_\Rr$ is a polytope $\phi(\Delta^\vee)$
which is reflexive (see \cite{Li13}). Consider the minimum fan
$\Sigma_1$ in $\overline N_\Rr$ associated to $\phi(\Delta^\vee)$,
i.e. the fan whose maximum cones correspond to facets of
$\phi(\Delta^\vee)$. Then $\mathbb P_{sing}$ is the toric Fano
variety associated to $\Sigma_1$.

\medskip
Note that the polytopes $\Delta_i$ lie in parallel translates of $\overline M$. We also observe \cite{Li13} that
$$
\sum_{i=1}^k\Delta_i-\deg
$$
is the reflexive polytope dual to $\phi(\Delta^\vee)$.
This allows us to view polytopes $\Delta_i$ as support polytopes for global sections of $k$ globally generated line
bundles on $\mathbb P_{sing}$. The Minkowski sum of the polytopes is the anti-canonical polytope, which
means that the tensor product of the line bundles is the anti-canonical bundle of $\Pp_{sing}$.
Thus, provided that the intersection $X_{c,sing}$ is of the expected dimension, it will be a Calabi-Yau variety
by the adjunction formula. For a generic choice of the coefficient function $c$ the
resulting $X_{c,sing}$ is of the correct dimension and has a DM stack resolution induced by a resolution
of the ambient space $\Pp_{sing}$.

\medskip
Before we prove the main result of this section, let us state and prove a simple lemma.
\begin{lemma}\label{above}
We have
$$
K= \sum_i \Rr_{\geq 0} \Delta_i.
$$
Moreover, if a point $v\in K$ has $\la v, t_i\ra = \alpha_i$, then
$v\in \sum_i \alpha_i \Delta_i$.
\end{lemma}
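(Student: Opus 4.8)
The plan is to reduce the refined assertion to a statement about the polytope $\Delta$, and then to identify $\Delta$ with the convex hull of the faces $\Delta_1,\dots,\Delta_k$. First I would set up the reduction: for $v\in K$ put $\alpha_i=\la v,t_i\ra$, which is nonnegative since $t_i\in K^\vee$, and note $\sum_i\alpha_i=\la v,\deg^\vee\ra$. If this sum vanishes then $v=0$, because $\deg^\vee$ lies in the interior of $K^\vee$ and hence pairs strictly positively with every nonzero point of $K$; then the claim is trivial. Otherwise $v/\la v,\deg^\vee\ra$ lies in $\Delta$, so it suffices to prove that any $x\in\Delta$ lies in $\sum_i\la x,t_i\ra\,\Delta_i$ (note $\sum_i\la x,t_i\ra=1$ here).

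The key step is to show $\Delta=\Conv(\Delta_1\cup\cdots\cup\Delta_k)$. For this I would examine a vertex $w$ of $\Delta$: it is a primitive generator of a ray of $K$, so $\la w,\deg^\vee\ra=1$, and therefore the nonnegative integers $\la w,t_1\ra,\dots,\la w,t_k\ra$ sum to $1$. Exactly one of them must equal $1$ and the rest must vanish, so $w$ lies in $K_{(1),i}\subseteq\Delta_i$ for a single index $i$. Since $\Delta$ is the convex hull of its vertices, this gives $\Delta\subseteq\Conv(\Delta_1\cup\cdots\cup\Delta_k)\subseteq\Delta$, hence equality; moreover the vertex sets of the faces $\Delta_i$ partition the vertex set of $\Delta$.

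To finish, I would do the bookkeeping of convex coefficients. Write $x\in\Delta$ as $\sum_w\mu_w w$, a convex combination of vertices of $\Delta$; using the partition above, group the $w$ by the index $i$ of the face containing them and let $S_i$ be the total weight of the $i$-th group, so $S_i\geq 0$ and $\sum_iS_i=1$. For each $i$ with $S_i>0$ form $x_i=S_i^{-1}\sum_{i(w)=i}\mu_w w\in\Delta_i$, so $x=\sum_{S_i>0}S_ix_i$. Pairing against $t_i$ and using $\la y,t_j\ra=\delta_{ij}$ for $y\in\Delta_i$ shows $\la x,t_i\ra=S_i$, whence $x\in\sum_i\la x,t_i\ra\,\Delta_i$. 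The identity $K=\sum_i\Rr_{\geq 0}\Delta_i$ then follows immediately: ``$\subseteq$'' is the assertion just proved, and ``$\supseteq$'' holds because each $\Delta_i\subseteq\Delta\subseteq K$ and $K$ is a convex cone.

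I expect the only genuine subtlety to be the vertex argument giving $\Delta=\Conv(\bigcup_i\Delta_i)$, and more precisely the integrality observation that a primitive ray generator of $K$, being of degree $1$ against $\deg^\vee$, cannot distribute its $t_i$-pairings across more than one index; everything else is routine convexity. In particular nonemptiness of the $\Delta_i$ is not needed as a separate input, since $S_i>0$ already forces a vertex of $\Delta$ inside $\Delta_i$.
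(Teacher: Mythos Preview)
Your argument is correct and follows essentially the same approach as the paper: both hinge on the observation that each primitive ray generator of $K$ pairs to $1$ with $\deg^\vee=\sum_i t_i$, hence lies in exactly one $\Delta_i$, and then use convexity to conclude. Your version simply spells out in detail the convex-combination bookkeeping that the paper compresses into the single phrase ``follows by considering the pairing with $t_1,\ldots,t_k$.''
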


\begin{proof}
Since $\Delta_i\subset K$, we have $\sum_i \Rr_{\geq 0} \Delta_i\subseteq K$.
In the other direction, observe that for every ray of $K$ its generator lies in one of the $\Delta_i$.
Finally, the last statement follows by considering the pairing with $t_1,\ldots,t_k$.
\end{proof}

Let us now formulate an important result that connects triangulations of the boundary of $\phi(\Delta^\vee)$ with fans on $K^\vee$.
\begin{proposition}\label{2.12}
Let $\overline \Sigma$ be a simplicial fan in $\overline N$ which comes from a regular triangulation of the boundary of $\phi(\Delta^\vee)$.
Consider the fan $\Sigma$ in $N$ which is the preimage of $\overline\Sigma$ intersected with $K^\vee$. Then preimages of the maximum cones of $\overline\Sigma$ are themselves simplicial cones in $N_\Rr$, and the fan $\Sigma$ comes from a regular
triangulation of $\Delta^\vee$.
\end{proposition}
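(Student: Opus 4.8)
The plan is to set up coordinates that make the preimage construction transparent, and then to transfer the two desired properties — simpliciality and the fact that $\Sigma$ triangulates $\Delta^\vee$ — across the splitting $N \cong \overline N \oplus \Zz^k$ induced by the $t_i$. First I would fix, using the argument already recorded just before Lemma \ref{above}, lattice elements $m_1,\dots,m_k$ with $m_i \in K_{(1),i}$; pairing with these gives a splitting of $0 \to \overline N \to N \to \Zz^k \to 0$, so every $n \in N_\Rr$ is written uniquely as $\bar n + \sum_j \la m_j, n\ra\, t_j$ with $\bar n \in \overline N_\Rr$. Under this identification the quotient map $\phi$ is the projection to the first summand, and the affine hyperplane $\{\la \deg, -\ra = 1\}$ carrying $\Delta^\vee$ is precisely $\{\sum_j \la m_j, -\ra = 1\}$, since $\deg = \sum_j m_j$ on $\overline M^{\perp}$-complement considerations (more precisely $\la \deg, t_j\ra = 1$ and $\la \deg, \bar n\ra$ records the pairing on the quotient). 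So $\Delta^\vee$ sits over $\phi(\Delta^\vee)$ as the graph-type region, and $K^\vee = \Cone(\Delta^\vee)$ sits over $\Cone(\phi(\Delta^\vee))$.

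Next I would analyze a single maximum cone $\bar\sigma$ of $\overline\Sigma$, coming from a facet $F$ of $\phi(\Delta^\vee)$ with vertices $\bar v_0,\dots,\bar v_{d-k-1}$ (here $d = \rk N$). Its preimage in $N_\Rr$ intersected with $K^\vee$ is $\phi^{-1}(\bar\sigma) \cap K^\vee$. I claim this equals the cone generated by the lattice points $\{v_0,\dots,v_{d-k-1}\} \cup \{t_1,\dots,t_k\}$, where $v_i \in \Delta^\vee$ is the vertex of $\Delta^\vee$ lying over $\bar v_i$ (such a vertex exists and is unique because the $v_i$ are vertices of $\phi(\Delta^\vee)$, hence the fiber of $\phi$ over $\bar v_i$ in $\Delta^\vee$ is a single point — this is the content of $\phi(\Delta^\vee)$ being reflexive and $\Delta^\vee$ being a lattice polytope, cf. \cite{Li13}). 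Granting the claim, this cone has $(d-k) + k = d$ generators in a $d$-dimensional space, and I must check they are linearly independent, i.e. the cone is simplicial: this follows because $\bar v_0,\dots,\bar v_{d-k-1}$ are affinely independent (facet of a simplicial fan's triangulation) together with the fact that $t_1,\dots,t_k$ span the kernel of $\phi$ and the $v_i$ project to the $\bar v_i$. The containment "$\subseteq$" in the claim is the only nontrivial inclusion: given $v \in K^\vee$ with $\phi(v) \in \bar\sigma$, I would write $\phi(v) = \sum_i c_i \bar v_i$ with $c_i \ge 0$, lift to $\sum_i c_i v_i \in \Cone(\Delta^\vee) = K^\vee$, and observe $v - \sum_i c_i v_i$ projects to $0$ under $\phi$, hence lies in $\spa_\Rr(t_1,\dots,t_k)$; the point is to check its $t_j$-coordinates are nonnegative, which should follow from $v \in K^\vee$ and $\la -, t_j\ra$-type inequalities together with Lemma \ref{above} applied on the dual side (or the analogous statement with the roles of $K$ and $K^\vee$ swapped).

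Having done this for each maximum $\bar\sigma$, I would then verify that these preimage cones, together with their faces, form a fan $\Sigma$ covering $K^\vee$ (covering is clear since $\overline\Sigma$ covers $\Cone(\phi(\Delta^\vee))$ and $K^\vee$ lies over it; compatibility on overlaps follows from compatibility of the $\bar\sigma$), and that its rays are generated by lattice points of $\Delta^\vee$, namely the $v_i$ over vertices of $\phi(\Delta^\vee)$ and the $t_j$ themselves. Hence $\Sigma$ is a simplicial refinement of $\Cone(\Delta^\vee) = K^\vee$ all of whose rays hit $\Delta^\vee$ in lattice points, i.e. it comes from a lattice triangulation of $\Delta^\vee$. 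Finally, regularity: a regular triangulation of $\partial\phi(\Delta^\vee)$ is cut out by a convex piecewise-linear support function $\bar\psi$ on $\Cone(\phi(\Delta^\vee))$; I would pull it back via $\phi$ and add a linear correction so that it is also strictly convex across the "new" facets introduced by the $t_j$-directions — concretely $\psi(n) := \bar\psi(\phi(n)) + \epsilon \sum_j \la m_j, n\ra$ for small $\epsilon$, or an argument that the triangulation of $\Delta^\vee$ is simply the join-type refinement and joins of regular triangulations are regular. The main obstacle I anticipate is the "$\subseteq$" direction of the cone identification — precisely, checking that the $t_j$-coordinates of $v - \sum c_i v_i$ are nonnegative — since this is where one genuinely uses that $(K,K^\vee)$ is a reflexive Gorenstein pair and not merely that $\phi(\Delta^\vee)$ is reflexive; I would expect to need Lemma \ref{above} or its dual analogue here, possibly applied facet-by-facet.
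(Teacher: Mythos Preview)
Your overall architecture matches the paper's: identify each preimage cone as $\Cone(v_1,\dots,v_{d-k},t_1,\dots,t_k)$, isolate the ``$\subseteq$'' direction as the real content, and pull back the support function for regularity. Two points, however, are not yet under control.

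First, the lattice lifts. You justify existence and uniqueness of $v_i\in\Delta^\vee$ over $\bar v_i$ by asserting that the $\bar v_i$ are vertices of $\phi(\Delta^\vee)$. They need not be: $\overline\Sigma$ comes from a regular triangulation of $\partial\phi(\Delta^\vee)$ and may well use non-vertex lattice points. The paper proves lifting for \emph{every} lattice point $p\in\phi(\Delta^\vee)$ by a direct minimality argument: choose any $\hat q\in K^\vee\cap N$ over $p$ of minimal degree, find for each $i$ a ray generator $v_i\in\Delta_i$ with $\la v_i,\hat q\ra=0$, and conclude that the $t_i$-coefficients in $\hat q$ are all $\leq 0$, forcing $\hat q\in\Delta^\vee$ (and giving uniqueness).

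Second, and this is the main gap, the ``$\subseteq$'' inclusion. You write $w-\sum_j c_j v_j=\sum_i\alpha_i t_i$ and hope to get $\alpha_i\geq 0$ from ``$\la -,t_j\ra$-type inequalities'' or Lemma~\ref{above} on the dual side. That does not work directly: no single element of $K$ pairs to isolate one $\alpha_i$ while killing the $v_j$-contributions, unless you construct it. The paper's trick is to use the \emph{facet} of $\phi(\Delta^\vee)$ containing the simplex spanned by the $\bar v_j$. Its inward normal $v\in\overline M_\Rr$ satisfies $\la v,\Delta^\vee\ra\geq -1$ and $\la v,v_j\ra=-1$, so $v+\deg\in K$ with $\la v+\deg,v_j\ra=0$. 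Now apply Lemma~\ref{above} to $v+\deg$ to write it as $\sum_i u_i$ with $u_i\in\Delta_i$; each $u_i$ is nonnegative on $K^\vee$, kills all $v_j$ (since their sum does and each term is $\geq 0$), and pairs to $\delta_{i,l}$ with $t_l$, so $\la u_i,w\ra=\alpha_i\geq 0$.

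For regularity you are overcomplicating matters. Since every maximal cone of $\Sigma$ contains all $t_i$, every interior wall of $\Sigma$ is the $\phi$-preimage of an interior wall of $\overline\Sigma$; the plain pullback $\bar\psi\circ\phi$ is already strictly convex across each of them. Your correction term $\epsilon\sum_j\la m_j,-\ra$ is linear and changes nothing.
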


\begin{proof}
It is clear that the preimages of the cones of $\overline\Sigma$ form a fan, when intersected with $K^\vee$.
To make sure that the generators of the rays of $\Sigma$ are lattice points of $\Delta^\vee$,
we need to show that all lattice points of $\phi(\Delta^\vee)$ are images of lattice points of $\Delta^\vee$.
Suppose that there is $p\in \phi(\Delta^\vee)\cap \overline N$. We have $p=\phi(q)$ for some $q\in \Delta^\vee$. While we can not assume that $q\in N$, we know that
there exist $\alpha_i\in\Rr$ such that
$$
\hat q=\sum_{i=1}^k \alpha_i t_i + q  \in  N.
$$
We may assume that $\hat q\in K^\vee$ and by picking such $\hat q$
of minimum degree $\la \deg,\hat q\ra$
 we can assure that for all $i$  we have $\hat q-
t_i\not\in K^\vee$. This means that there are ray generators of $K$ which we denote
by $v_i$
such that $\la v_i ,\hat q - t_i\ra<0$. Since $\la v_i ,\hat
q\ra \geq 0$, this shows that $\la v_i ,t_i\ra>0$, thus $v_i\in
\Delta_i\cap M$. Since $\la v_i,t_i\ra=1$ and $\la v_i ,\hat q\ra$
is integer, we see that $\la v_i ,\hat q\ra=0$. Since $ \la v_i
,\hat q\ra = \alpha_i + \la v_i , q\ra $ and $\la v_i,q\ra$ are
nonnegative, we see that $\alpha_i\leq 0$. Unless all $\alpha_i$ are
zero, we see that
$$
\la \deg,\hat q\ra = \sum_i \alpha_i +\la \deg,q\ra = \sum_i \alpha_i +1 <1.
$$
Since $\hat q\in N\cap K^\vee$, we get
$\hat q={\bf 0}$, in which case the above argument shows that the only lattice
preimages $q$ of ${\bf 0}\in \overline N$ inside $\Delta^\vee$ are $t_i$.
If all $\alpha_i$ are zero, it means that $q=\hat q$ is a lattice
point. In fact, the above argument show the uniqueness of such $q$.

\medskip
Let us now show that the intersection of the  preimage of a maximum-dimensional cone
$$
\sigma_1 = \Rr_{\geq 0}\phi(w_1)+\cdots+\Rr_{\geq 0}\phi(w_{d-k})
$$
of $\overline\Sigma$ with $K^\vee$ is the cone
$$
\sigma =  \Rr_{\geq 0}w_1+\cdots+\Rr_{\geq 0}w_{d-k}+\sum_{i=1}^k \Rr_{\geq 0}t_i.
$$
The inclusion
$$
\sigma \subseteq \phi^{-1}(\sigma_1)\cap K^\vee
$$
is clear. In the other direction, observe that there is a facet of $\phi(\Delta^\vee)$ that
contains all $\phi(w_j)$. This means that there is an element $v\in \overline M_\Rr\subset M_\Rr$ (we can pick it in $M$, but we will
not need it) such that $\la v,t_i \ra=0$ for all $i$,  $\la v, \Delta^\vee\ra \geq -1$ and $\la v, w_j\ra=-1$ for all $j$.
We see that $(v+\deg)$ is nonnegative on $\Delta^\vee$, therefore $v+\deg\in K$. By Lemma \ref{above}
we see that
$$
v+\deg = \sum_{i=1}^k v_k
$$
with $v_i\in \Delta_i$. Observe that $\la v+\deg , w_j \ra =0$ and $v+\deg \in K$ implies
that $\la v_i,w_j\ra = 0$ for all $i$ and $j$.

\medskip
Now suppose that
$$w = \sum_{i} \alpha_i t_i + \sum_j \beta_j w_j\in  \phi^{-1}(\sigma_1)\cap K^\vee$$
Then for all $i$ we have $\la v_i,w\ra \geq 0$ which  implies  that $\alpha_i\geq 0$, so $w\in \sigma$.

\medskip
Last, we observe that the fan $\Sigma$ is regular, since we may simply use the pullback of the
convex  piecewise-linear function on $\overline\Sigma$ to give one for $\Sigma$.
\end{proof}

By Proposition \ref{2.12}, we can construct a regular simplicial fan $\Sigma$ on $K^\vee$ with the
following property.
\begin{equation}\label{central}
\emph{All maximum dimensional cones of $\Sigma$ contain $t_1,\ldots,t_k$.}
\end{equation}

\medskip
The toric variety $\Pp_\Sigma$ is a vector bundle
over the toric variety $\Pp_{\overline\Sigma}$. However, we are interested in the corresponding Deligne-Mumford
stacks. We will review the construction  of these stacks briefly. It is much easier in this setting than
in general \cite{BCS05} since the lattices in question do not have torsion.

\medskip
Consider the open torus-invariant subset $U_\Sigma$ of the variety $\Cc^{K^\vee_{(1)}}$ of complex-valued functions
on the finite set $K^\vee_{(1)}$
given by the points ${\bf z}:K^\vee_{(1)}\to \Cc$ with the zero locus a subset $\sigma\in \Sigma$.
\footnote{Here we mean that the indices of the zero locus form a set in the simplicial complex that
corresponds to $\Sigma$. In particular, if a point in $K^\vee_{(1)}$ is not used in $\Sigma$, the corresponding
variable is always nonzero. The idea of using such invisible rays has appeared soon after the initial construction of stacky fans,
see for example \cite{J08}.}
There is a group $G$ which acts on $U_\Sigma$. It is the subgroup of the torus $(\Cc^*)^{K^\vee_{(1)}}$
which consists of ${\bf \lambda}:K^\vee_{(1)}\to \Cc^*$ with the property that
$$
\prod_{n\in K^\vee_{(1)}} \lambda(n)^{\la m,n\ra} = 1
$$
for all $m\in M$. The smooth Deligne-Mumford stack $[U_\Sigma/G]$ is
a resolution of singularities of ${\rm Spec}(\Cc[K\cap M])$.

\medskip
Similarly, we consider the open subset
$U_{\overline \Sigma}$ of the variety $\Cc^{K^\vee_{(1)}-\{t_1,\ldots,t_k\}}$ given
by the property that the zero locus of $\bf z$ lies in a cone of $\overline\Sigma$. We see that
$$
U_\Sigma = U_{\overline\Sigma}\times \Cc^k
$$
where the second factor corresponds to the values of ${\bf z}$ on $t_1,\ldots,t_k$. Moreover, our description of
the set of lattice points of $\phi(\Delta^\vee)$ implies that the group $\overline G$ used to construct the Deligne-Mumford
stack that corresponds to $\overline\Sigma$ coincides with the group $G$ above. Specifically, every function
$$
\overline \lambda:{K^\vee_{(1)}-\{t_1,\ldots,t_k\}}\to \Cc^*
$$
which satisfies
$$\prod_{n\in K^\vee_{(1)} - \{t_1, \ldots, t_k\}}
\overline \lambda(n)^{\la m,n\ra} = 1$$
for all $m\in \overline M =
{\rm Ann}(\Zz t_1+\cdots \Zz t_k)$ can be uniquely extended to
$$
 \lambda:K^\vee_{(1)}\to \Cc^*
$$
which satisfies the above condition for all $m\in M$. This gives $U_\Sigma$ a structure of the $G$-equivariant vector
bundle over $U_{\overline\Sigma}$ and gives $\Pp_\Sigma = [U_\Sigma/G]$ a structure of a vector bundle over
$\Pp_{\overline\Sigma} = [U_{\overline\Sigma}/G]$. Moreover, this vector bundle is naturally the direct sum of $k$ line bundles that
correspond to the individual coordinates $z(t_i)$.

\medskip
As before, consider a generic coefficient function $c$. It allows us to define polynomials $\overline h_i$ in coordinates
of $\Cc^{K^\vee_{(1)}-\{t_1,\ldots,t_k\}}$
\[
 \overline h_i:= \sum_{m\in K_{(1),i}} c(m) \prod_{n\in K^\vee_{(1)}-\{t_1,\ldots,t_k\}}
{z(n)^{\la m, n\ra}},
\]
and the corresponding complete intersection on $U_{\overline\Sigma}$
\[
\overline X_{c} := \{ \overline h_1 = \cdots =  \overline h_k =0\} \subset U_{\overline\Sigma}.
\]
Observe that we have
$$
C({\bf z})=\sum_{i=1}^k z(t_i)\overline h_i = \sum_{m\in K_{(1)}}
c(m) \prod_{n\in K^\vee_{(1)}} {z(n)^{\la m, n\ra}},
$$
and therefore $\{C=0\}$ is the Cayley hypersurface associated to the complete intersection of $\{\overline h_i=0\}$.
Since the above construction  is $G$-equivariant, we see that the same is true for the stacks
$$
\Hh_c:=[\{C({\bf z})=0\}/G]\subset \Pp_\Sigma
$$
and
$$
\Xx_c:=[\cap_i \{\overline h_i=0\}/G]\subset \Pp_{\overline\Sigma}.
$$
We observe that if $c$ is generic (for example $\Delta$-nondegenerate in the sense of Batyrev) then
$\Xx_c$ is a smooth Deligne-Mumford stack. The stack $\Hh_c$ is singular along $\Xx_c$ which is naturally
included as a substack in the zero section of the vector bundle $\Pp_{\Sigma}\to \Pp_{\overline\Sigma}$.

\medskip
We have thus described how to  associate to a decomposition
$$
\deg^\vee= t_1+\cdots +t_k
$$
and a fan $\Sigma$ with the property \eqref{central} a family of smooth Calabi-Yau DM stacks.

\subsection{Mirrors and double mirrors.}
To construct the mirror family, one should consider a decomposition
$$
\deg = u_1+\ldots + u_k
$$
where $u_i\in K_{(1)}$. One can reindex $u_i$ to ensure that $\la u_i, t_j \ra=\delta_{i,j}$. One can show that
the combinatorial data of $(K,K^\vee)$ together with the pair of decompositions of $\deg$ and $\deg^\vee$
are precisely equivalent to the data of the nef-partition considered originally in \cite{BB94}.
It allows one to construct  a family
$$
\{\Yy_{c^\vee}\}
$$
for generic mirror coefficient functions $c^\vee:K^\vee_{(1)}\to \Cc$, provided one picks a simplicial subdivision
of $K$.

\medskip
The double mirrors  of $\{\Xx_c\}$ are then defined as \emph{mirrors of mirrors}  of  $\{\Xx_c\}$. As the above
discussion shows, they can be obtained from the same pair of cones $(K,K^\vee)$ and the coordinate functions $c$
by changing the fan $\Sigma$ and more interestingly \emph{by changing a decomposition of $\deg^\vee$}
to
$$
\deg^\vee = t_1' +\cdots + t_k'.
$$
A striking observation regarding this double mirror construction is
that the coefficients $c(m)$ are generally sorted into different subsets
to define the complete intersection! Nonetheless the resulting stacks are expected to share many properties.

\begin{remark}
In what follows we will call $\{\Xx_{c}'\}$ a double mirror of
$\{\Xx_{c}\}$ even in the absence of a choice of a decomposition of
$\deg$. \footnote{Such decomposition may not exist, in which case
the families in questions are in some sense generalized
double-mirrors of each other. We do not expect this to be
essential.} We are less interested in the choice of $\Sigma$ and
$\Sigma'$, although they of course are needed. The reason is that
different choices of the fans amount to a composition of toric
flops.
\end{remark}

\medskip
We collect existing results related to Batyrev-Borisov double mirrors $\Xx, \Xx'$.

\begin{theorem}
For any $p, q$, the stringy Hodge numbers $h^{p,q}_{\rm st}(X_{c,sing})$ and $h^{p,q}_{\rm st}(X_{c,sing}')$ coinside.
\end{theorem}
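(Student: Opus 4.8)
The plan is to reduce the statement to an equality of combinatorial generating functions attached to the pair $(K,K^\vee)$ alone, and then to observe that $X_{c,sing}$ and $X_{c,sing}'$ are built from the \emph{same} pair $(K,K^\vee)$ — only the sorting of the lattice points of $\Delta$ into the faces $\Delta_1,\dots,\Delta_k$ (coming from the two decompositions of $\deg^\vee$) changes. First I would replace the $h^{p,q}_{\rm st}$ by the stringy $E$-function $E_{\rm st}(X_{c,sing};u,v)=\sum_{p,q}(-1)^{p+q}h^{p,q}_{\rm st}(X_{c,sing})\,u^pv^q$; since these stringy Hodge numbers are independent of the generic $c$ and of the chosen crepant (stacky) resolution $\Xx_c$, and since for these Calabi--Yau complete intersections $E_{\rm st}$ is a genuine polynomial, it suffices to prove $E_{\rm st}(X_{c,sing};u,v)=E_{\rm st}(X_{c,sing}';u,v)$.

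For generic $c$ the complete intersection $X_{c,sing}\subset \mathbb{P}_{sing}$ is a Calabi--Yau complete intersection of $k$ nef divisors (with support polytopes $\Delta_1,\dots,\Delta_k$, whose Minkowski sum is the anticanonical polytope) in a Gorenstein toric Fano, so $E_{\rm st}(X_{c,sing};u,v)$ is computed by the Batyrev--Borisov formula. That formula is a finite sum indexed by faces $\theta\preceq\Delta$ together with their dual faces $\theta^*\preceq\Delta^\vee$, with summands assembled from the ``$B$-polynomials'' $B(\theta,\theta^*;u,v)$ and the Ehrhart-type ``$S$-polynomials'' $S(\,\cdot\,;t)$ of lattice polytopes, and with a weighting that records how the lattice points in question are distributed among the pieces $K_{(1),1},\dots,K_{(1),k}$ (equivalently, among the $\Delta_i$ and the dual polytopes $\nabla_i$). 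It is precisely through this weighting and through the shapes of the $\Delta_i$ that the decomposition of $\deg^\vee$ enters the formula a priori.

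The heart of the argument, following Batyrev and Nill \cite{BN08}, is to re-package this sum so that the decomposition drops out. A decomposition $\deg^\vee=t_1+\cdots+t_k$ exhibits $K^\vee$ as the Cayley cone over the polytopes $\nabla_i$ and, dually, $K$ as the Cayley cone over the $\Delta_i$; but the cones $K$ and $K^\vee$ themselves do not remember this Cayley structure, and their faces and lattice points are controlled by $\phi(\Delta^\vee)$ independently of the decomposition (this is the combinatorial input already exploited in Proposition \ref{2.12}). Using the behaviour of the $B$- and $S$-polynomials under the Cayley/join construction, under passage to faces, and under the reciprocity/duality they satisfy, one rewrites each summand of the Batyrev--Borisov formula as a quantity depending only on the corresponding pair of faces of $K$ and $K^\vee$ and on the index $k$. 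Summing, $E_{\rm st}(X_{c,sing};u,v)$ becomes a manifestly decomposition-independent expression $E_{\rm st}(K,K^\vee;u,v)$ — consistently with Batyrev--Borisov mirror duality, which interchanges $K$ and $K^\vee$ and $u\leftrightarrow u^{-1}$. Applying this to the two decompositions of $\deg^\vee$ gives $E_{\rm st}(X_{c,sing};u,v)=E_{\rm st}(X_{c,sing}';u,v)$, hence the theorem.

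The main obstacle is this last reshuffling: one must check that \emph{every} contribution which naively depends on how the lattice points of $\Delta$ are distributed among the $\Delta_i$ either cancels or recombines into an intrinsic invariant of $(K,K^\vee)$. This requires the precise recursion relations for $B$- and $S$-polynomials, their multiplicativity under Cayley sums, and their compatibility with taking faces — the technical core of \cite{BN08}. A secondary point, easy but worth stating, is the reduction in the first paragraph: that $h^{p,q}_{\rm st}$ is well defined (the singularities of $X_{c,sing}$ are Gorenstein canonical, admitting the crepant resolution $\Xx_c$) and that $E_{\rm st}$ is polynomial here, so that the $h^{p,q}_{\rm st}$ can legitimately be read off from $E_{\rm st}$.
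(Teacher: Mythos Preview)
Your conclusion is correct, but you have taken a significantly more circuitous route than the paper and, in doing so, misattributed the key input. The paper's proof is a single sentence: the main theorem of \cite{BB96} already expresses $E_{\rm st}(X_{c,sing};u,v)$ as a combinatorial formula depending \emph{only} on the pair of reflexive Gorenstein cones $(K,K^\vee)$ --- a sum over dual pairs of faces of $K$ and $K^\vee$ with summands built from the $S$- and $B$-polynomials of those faces. No weighting by the pieces $K_{(1),i}$ appears; the nef-partition (equivalently, the decomposition of $\deg^\vee$) simply does not enter the final formula. Since $X_{c,sing}$ and $X_{c,sing}'$ arise from the same $(K,K^\vee)$, equality of their stringy $E$-functions is immediate.

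Your plan posits that the Batyrev--Borisov formula \emph{does} depend a priori on the decomposition and that one must invoke a ``reshuffling'' argument from \cite{BN08} to remove this dependence. This is a misconception on two counts. First, \cite{BB96} already proved the decomposition-free formula --- indeed, that is precisely what made the mirror duality $E_{\rm st}(X;u,v)=(-u)^{d}E_{\rm st}(X^\vee;u^{-1},v)$ visible. Second, \cite{BN08} does not contain the technical reshuffling you describe; its role in the present paper is to supply the \emph{framework} of decompositions of $\deg^\vee$ and to pose the birationality and derived-equivalence conjectures, not to reprove the Hodge-number identity. So while your outline is not mathematically wrong in spirit, the hard work you flag as ``the main obstacle'' was completed in \cite{BB96}, and your citation of \cite{BN08} for it is misplaced.
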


This is a direct consequence of the main theorem of \cite{BB96} which provides a combinatorial
formula for $h_{\rm st}^{p,q}(X_{c,sing})$ in terms of the combinatorics of the cones $K$ and $K^\vee$. Note
that these Hodge numbers are the orbifold Hodge numbers of the crepant stacky resolutions $\Xx_{c}$
and $\Xx_c'$ considered above.

\medskip

In \cite{BN08}, Batyrev and Nill proposed conjectures on the birationality and derived equivalence of double mirrors. These conjectures have been answered in \cite{Li13, FK14}.

\begin{theorem}(\cite{Li13})
Under some mild technical assumptions, the double mirrors $\Xx_c$ and
$\Xx'_c$ are birational.
\end{theorem}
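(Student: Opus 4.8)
\emph{Strategy.} The plan is to reduce the claim to a birational equivalence between two quotients of complete intersections inside an algebraic torus, and then to build the birational map by playing the two decompositions of $\deg^\vee$ against each other. The first reduction is harmless: as observed in the remark after the definition of double mirrors, changing the regular simplicial fan on $K$ replaces $\Xx_c$ only by a variety obtained from it by a composition of toric flops, hence a birational one, so it suffices to fix one convenient pair of fans. We may thus argue on the dense tori. Over $T_N=\spec\Cc[M]$ the complete intersection $\Xx_c$ is birational to the quotient $\{h_1=\cdots=h_k=0\}/T_k$, where $h_i:=\sum_{m\in K_{(1),i}}c(m)\chi^m$ and $T_k\subset T_N$ is the subtorus whose cocharacter lattice is $\Zz t_1+\cdots+\Zz t_k$; each $h_i$ is a $T_k$-semiinvariant of the $i$-th weight, so the quotient is well defined and has dimension $d-2k$. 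The one object here independent of the decomposition is $F:=\sum_i h_i=\sum_{m\in K_{(1)}}c(m)\chi^m$, which also equals $\sum_j h'_j$ for the second decomposition; likewise the diagonal one-parameter subgroup of $T_k$, whose cocharacter is $\deg^\vee$, coincides with that of $T'_k$.

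\emph{Construction of the map.} I would connect the two decompositions by a finite chain of \emph{elementary moves}, each changing only two summands: say $t_1+t_2$ is replaced by $s_1+s_2$ with $t_1+t_2=s_1+s_2$, where $s_1,s_2$ are again lattice points of $\Delta^\vee$ and $t_3,\dots,t_k$ are kept. Under such a move the sets $K_{(1),i}$ and the polynomials $h_i$ with $i\ge 3$ are unchanged, while $K_{(1),1}\sqcup K_{(1),2}$ gets re-partitioned and $h_1,h_2$ are replaced by a different splitting $h_{s_1},h_{s_2}$ of $h_1+h_2$; the grading torus becomes $T_k^{\mathrm{new}}$, with cocharacter lattice $\Zz s_1+\Zz s_2+\Zz t_3+\cdots+\Zz t_k$. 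It then suffices to prove, for a single move, that $\{h_1=\cdots=h_k=0\}/T_k$ and $\{h_{s_1}=h_{s_2}=h_3=\cdots=h_k=0\}/T_k^{\mathrm{new}}$ are birational. This is in essence a two-variable problem: after a suitable monomial change of coordinates adjusting for the difference between $T_k$ and $T_k^{\mathrm{new}}$, the two systems of $k$ equations agree outside the two affected ones, and the re-splitting $h_1+h_2=h_{s_1}+h_{s_2}$ is realized by a standard quadratic (Cremona-type) transformation in the relevant pair of coordinates, which is birational. Concatenating the moves produces an isomorphism between dense open subsets of $\Xx_c$ and $\Xx'_c$, that is, a birational equivalence between them.

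\emph{Where the difficulty lies.} The crux is the combinatorial statement that any two decompositions of $\deg^\vee$ can be joined by a chain of elementary moves whose intermediate decompositions all lie in $K^\vee_{(1)}$. This is where the geometry of reflexive Gorenstein cones must enter — in particular the reflexivity of the projected polytope $\phi(\Delta^\vee)$ and Lemma \ref{above} — and it is conceivable that for some cones one must also allow moves affecting three or more summands, which would make both the combinatorics and the corresponding local models heavier. A second, more technical point is to verify that each local model is genuinely a birational map rather than merely a dominant rational map of equal dimension; here one uses that the pieces $h^j_i:=\sum_{m\in K_{(1),i}\cap K'_{(1),j}}c(m)\chi^m$ occur with pairwise distinct torus weights, together with $\Delta$-nondegeneracy of the generic coefficient function $c$, to conclude that the relevant loci are irreducible of the expected dimension and that generic fibres are single reduced points. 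If the reduction to elementary moves turns out to be intractable, an alternative is a degeneration argument: $\Xx_c$ and $\Xx'_c$ both admit toric degenerations attached to a common coherent triangulation of the cones, and in many cases Calabi-Yau varieties sharing such a semistable toric degeneration are known to be connected by flops; making this precise would still require controlling the stack structure through the degeneration.
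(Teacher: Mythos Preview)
The paper does not actually prove this theorem: it is stated with a bare citation to \cite{Li13} and no argument is given here. So there is no in-paper proof to compare your proposal against; what I can do is assess whether your sketch would stand on its own.

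As written, it does not. You have correctly identified the two load-bearing steps and, to your credit, flagged both as open --- but that means what you have is a plan, not a proof. Concretely:

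\textbf{Connectivity of decompositions.} Your inductive scheme rests on the claim that any two decompositions $\deg^\vee=\sum t_i=\sum t'_i$ with $t_i,t'_i\in K^\vee_{(1)}$ can be linked by a chain of ``elementary moves'' $t_1+t_2\leadsto s_1+s_2$ that stay inside $K^\vee_{(1)}$. You neither prove this nor cite a result that implies it, and you yourself concede it may fail (``it is conceivable that\ldots one must also allow moves affecting three or more summands''). Without it the whole induction collapses, and for higher-index cones there is no obvious reason such two-term moves suffice.

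\textbf{The local model.} Even granting connectivity, the birational map for a single move is asserted (``standard quadratic (Cremona-type) transformation'') rather than constructed. Note that $t_1+t_2=s_1+s_2$ does \emph{not} force $t_1,t_2,s_1,s_2$ to span a two-dimensional sublattice, so your ``in essence a two-variable problem'' reduction is unjustified: the cocharacter lattices $\Zz t_1+\cdots+\Zz t_k$ and $\Zz s_1+\Zz s_2+\Zz t_3+\cdots+\Zz t_k$ can genuinely differ, and the quotient tori $T_k$, $T_k^{\mathrm{new}}$ with them. The phrase ``suitable monomial change of coordinates'' hides exactly the content that needs to be supplied.

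Your fallback degeneration idea is also left as a heuristic. If you want to turn this into a proof, you must either (a) establish the connectivity lemma and write down the explicit rational map for an elementary move, checking it is birational on the relevant irreducible components, or (b) abandon the move-by-move strategy and instead produce a single birational model common to $\Xx_c$ and $\Xx'_c$ directly from the shared data $(K,K^\vee,c)$ --- which is closer in spirit to how \cite{Li13} proceeds.
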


\begin{theorem}(\cite{FK14})
The double mirrors $\Xx_c$ and $\Xx'_c$ are derived equivalent.
\end{theorem}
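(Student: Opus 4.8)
The plan is to realize both $D^b(\Xx_c)$ and $D^b(\Xx'_c)$ as graded equivariant categories of matrix factorizations of one and the same potential, differing only by a choice of GIT linearization, and then to join the two linearizations by a chain of \emph{balanced} wall-crossings in the sense of Ballard--Favero--Katzarkov \cite{BFK12}. Keeping $(K,K^\vee)$ and the coefficient function $c$ fixed, form the affine space $\mathbb{A}:=\spec\Cc[z(n):n\in K^\vee_{(1)}]$, the torus $G\subset(\Cc^*)^{K^\vee_{(1)}}$ of the excerpt, and the potential
\[
C(\zz)\ =\ \sum_{m\in K_{(1)}}c(m)\prod_{n\in K^\vee_{(1)}}z(n)^{\la m,n\ra}.
\]
None of $\mathbb{A}$, $G$, $C$ depends on a decomposition of $\deg^\vee$ or on a fan: $G$ is intrinsic to the map $K^\vee_{(1)}\hookrightarrow N$, and $C$ is $G$-semi-invariant of a single weight $\theta\in\Hom(G,\Cc^*)$, since for $m,m'\in K_{(1)}$ the two corresponding monomials differ by $\prod_n z(n)^{\la m-m',n\ra}$, whose character is trivial on $G$ because $m-m'\in M$. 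A decomposition $\deg^\vee=t_1+\cdots+t_k$ together with a fan $\Sigma$ satisfying \eqref{central} contributes only the open subset $U_\Sigma\subset\mathbb{A}$, which is precisely the semistable locus of a character $\chi_\Sigma\in\Hom(G,\Cc^*)$ (an ample class on $\Pp_\Sigma$); with a compatible $\Cc^*$ ($R$-charge) grading, $([U_\Sigma/G],C)$ is a well-posed Landau--Ginzburg model.

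First I would identify the geometric phase with a factorization category. Since $\Sigma$ satisfies \eqref{central}, $\Pp_\Sigma=[U_\Sigma/G]$ is the total space over $\Pp_{\overline\Sigma}$ of $L_1^\vee\oplus\cdots\oplus L_k^\vee$, where $L_i$ is the line bundle with support polytope $\Delta_i$, the functions $\overline h_i$ are the sections of the $L_i$ cut out by $c$, and $C=\sum_i z(t_i)\overline h_i$ is the tautological pairing. For generic $c$ the complete intersection $\Xx_c=\{\overline h_1=\cdots=\overline h_k=0\}$ has the expected dimension, so the equivariant, stacky form of the Cayley-trick theorem of Isik and Shipman (and Orlov in the hypersurface case) yields an equivalence
\[
D^b(\Xx_c)\ \cong\ D^{\mathrm{gr}}_{\mathrm{fact}}\bigl([U_\Sigma/G],\,C\bigr)\ \cong\ D^{\mathrm{gr}}_{\mathrm{sg}}(\Hh_c),
\]
and in the same way $D^b(\Xx'_c)\cong D^{\mathrm{gr}}_{\mathrm{fact}}([U_{\Sigma'}/G],C)$ for the primed decomposition and fan.

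Next I would move between the two linearizations. The fans $\Sigma$ and $\Sigma'$ are both regular simplicial subdivisions of $K^\vee$ with rays among $K^\vee_{(1)}$, so $\chi_\Sigma$ and $\chi_{\Sigma'}$ lie in two chambers of the GKZ (secondary) fan of $K^\vee_{(1)}$ and can be joined by a path crossing finitely many walls, passing if necessary through chambers whose factorization category is a Landau--Ginzburg orbifold rather than a geometric complete intersection. At a wall, given by a one-parameter subgroup $\lambda$ of $G$ with fixed locus $Z_\lambda\subset\mathbb{A}$, the theorem of \cite{BFK12} relates the factorization categories on the two sides by a semiorthogonal decomposition with $|\mu|$ extra components, where $\mu$ is the weight defect of the wall (a combination of the $\lambda$-weights of the toric coordinates and of $C$); when $\mu=0$ the wall is balanced and the two sides are equivalent. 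The reflexive Gorenstein hypothesis is exactly the Calabi--Yau condition that forces $\mu=0$ at every wall one must cross: the Gorenstein condition $\la\deg,n\ra=1$ for all $n\in K^\vee_{(1)}$ makes $\theta$ equal to the anticanonical class $-K_{\Pp_\Sigma}$ on every $\Pp_\Sigma$ --- equivalently, the sum of the toric boundary classes equals the weight of the potential --- which is precisely the anomaly cancellation that balances each crossing. Chaining the equivalences along the path gives
\[
D^b(\Xx_c)\cong D^{\mathrm{gr}}_{\mathrm{fact}}([U_\Sigma/G],C)\cong\cdots\cong D^{\mathrm{gr}}_{\mathrm{fact}}([U_{\Sigma'}/G],C)\cong D^b(\Xx'_c).
\]

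The hard part will be the balancing analysis above: extracting from the combinatorics of $(K,K^\vee)$ that the defect $\mu$ vanishes at every wall that must be crossed, and --- relatedly --- choosing a path from $\chi_\Sigma$ to $\chi_{\Sigma'}$ that stays inside the region where this holds while reconciling the different $R$-charge gradings naturally attached to the two decompositions (this reconciliation is itself an equivalence, but must be threaded into the chain). A secondary technical point is to set up the Cayley-trick equivalence of the first step for Deligne--Mumford stacks carrying both the $G$-action and the $R$-charge $\Cc^*$, and to pin down the genericity of $c$ (for instance $\Delta$-nondegeneracy) under which the complete intersections have the expected dimension and the resulting stacks are smooth.
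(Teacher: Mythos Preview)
Your proposal is correct and follows essentially the same route as the paper: combine the Isik--Shipman equivalence $D^b(\Xx_{c;\Sigma})\cong D_B(K,c;\Sigma)$ (Theorem~\ref{isikthm}) with the fan-independence of $D_B(K,c;\Sigma)$ via balanced BFK wall-crossings (Theorem~\ref{samecat}). The balancing you flag as hard is in fact immediate---pairing the wall relation $\sum_i\alpha_i v_i+\sum_j\beta_j u_j={\bf 0}$ with $\deg$ gives $\mu=\sum_i\alpha_i+\sum_j\beta_j=0$ since $\la\deg,n\ra=1$ for all $n\in K^\vee_{(1)}$---and the $R$-charge grading is intrinsic to $\deg^\vee$ (it is the quotient $\hat G/G$, independent of any decomposition), so no reconciliation between the two decompositions is needed.
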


There are other types of (commutative) mirror constructions in mirror symmetry which also exhibit double mirror phenomenon. Among them,  the so called Berglund-H\"ubsch-Krawitz mirror construction  \cite{BH93,Kra09} is particularly well understood.  We briefly mention the parallel results in BHK setting.

\medskip

Let $Z_{A,G}, Z_{A',G}$ be a pair of
BHK double mirrors. The equivalence between orbifold Chen-Ruan cohomology of corresponding DM-stacks $[Z_{A,G}]$ and  $[Z_{A',G}]$ is a consequence of the main result of  \cite{CR11}.  The birationality of double mirrors
is established in various generality in \cite{Sho12, Kel13, Cla13, Bor13}. In \cite{FK14}, the derived categories of double mirrors are shown to be equivalent.

\section{The underlying philosophy: triangulated categories associated to reflexive Gorenstein cones.}\label{philosophy}
In this section we explain the underlying philosophy that guides our construction. We construct triangulated categories
of type IIB boundary conditions for the data of reflexive Gorenstein cones and corresponding potentials. We argue that these categories
provide the correct definition and should be the primary object of study.

\bigskip
Let $K\subset M_\Rr$ and $K^\vee\subset N_\Rr$ be dual reflexive Gorenstein cones.
Let
$$c:K_{(1)}\to \Cc,~~c^\vee:K^\vee_{(1)}\to \Cc
$$
be generic coefficient functions. To this data one should be able to associate (in some vague physical sense)
$N=(2,2)$ superconformal field theories of type IIA and IIB. The switch between the IIA and IIB should
correspond to the switch of the data and the dual data. For the purposes of the following discussion we will focus
on the IIB theory. We should view this theory as some kind of generalized Landau-Ginzburg theory with the
potential $c$ and the generalized K\"ahler data (or mirror potential) given by $c^\vee$.

\bigskip
A reasonably well understood feature of type IIB superconformal field theory is the triangulated category of
the boundary conditions on the open strings. If the target is a Calabi-Yau manifold, then this is simply the
derived category of coherent sheaves on it. In what follows we propose a definition of such triangulated category
in our setting.

\bigskip
Consider the singular affine toric variety ${\rm Spec}(\Cc[K\cap M])$ and the hypersurface in it
$$
X_c:={\rm Spec}(\Cc[K\cap M]/\la \sum_{m\in K_{(1)}} c(m)[m]\ra).
$$
We want to define the triangulated category in question  as some resolution of the factorization
category of $X_c$. The grading is given by $\deg^\vee\in K^\vee$. The actual definition is given
below in terms of the Cox construction.

\bigskip
Consider a regular triangulation $\Sigma$ of $K^\vee$ and the corresponding Cox open subset
$$U_\Sigma \subset \Cc^{K^\vee_{(1)}}$$
given by the points ${\bf z}:K^\vee_{(1)}\to \Cc$ with the zero
locus a subset of some cone $\sigma\in \Sigma$.
 There is a group $G$ which acts on
$U_\Sigma$. It is a subgroup of the torus $(\Cc^*)^{K^\vee_{(1)}}$
which consists of ${\bf \lambda}:K^\vee_{(1)}\to \Cc^*$ with the
property that
$$
\prod_{n\in K^\vee_{(1)}} \lambda(n)^{\la m,n\ra} = 1
$$
for all $m\in M$.
The smooth Deligne-Mumford stack $[U_\Sigma/G]$ is a resolution of singularities of ${\rm Spec}(\Cc[K\cap M])$.

\bigskip
The coefficient function $c$ defines a hypersurface $C=0$ in $U_\Sigma$ where
$$
C({\bf z}) = \sum_{m\in K_{(1)}} c(m) \prod_{n\in K^\vee_{(1)}} z(n)^{\la m,n\ra}
$$
is the $G$-invariant polynomial that corresponds to $\sum_m c(m) [m]\in \Cc[K\cap M]$.

\bigskip
The action of $\Cc^*$ on $[U_\Sigma/G]$ that we alluded to before manifests itself in a natural supgroup
$\hat G\supset G$ defined as
\begin{equation}\label{hatg}
\hat G:=\{ { \bf \lambda}:K^\vee_{(1)}\to \Cc^*\Big\vert
\prod_{n\in K^\vee_{(1)}} \lambda(n)^{\la m,n\ra} = 1,~{\rm for~all~}m\in {\rm Ann}(\deg^\vee)\}.
\end{equation}
We have a natural inclusion
$G\subset \hat G$ and the quotient $\hat G/G$ can be identified with $\Cc^*$ by
choosing $m\in M$ with $\la m,\deg^\vee \ra=1$. Then the map
$\hat G\to \Cc^*$ given by
$$
{\bf \lambda} \mapsto \prod_{n\in K^\vee_{(1)}} \lambda(n)^{\la m,n\ra}
$$
is surjective, has kernel $G$ and is independent from the choice of  $m$ above.

\begin{definition}\label{maincat}
We define the triangulated category
$$
D_B(K,c;\Sigma)
$$
as the derived category of matrix factorizations of $([U_\Sigma/\hat
G];C)$ in the sense of \cite[Definition 2.3.2]{BFK12}, which goes
back to \cite{EP15}. It is obtained from pairs of $\hat
G$-equivariant coherent sheaves $\Ff_1,\Ff_2$ on $U_\Sigma$, with
the maps $f:\Ff_1\to\Ff_2,g:\Ff_2\to \Ff_1\otimes \Ll$ whose
composition is multiplication by $C$. Here $\Ll$ is the
$\hat G$-equivariant line bundle on $U_\Sigma$ such that $C$ is its
invariant section. It is then further localized by
acyclic objects, see \cite{BFK12}.
\end{definition}

We would like to argue that thus defined category, also appropriately called the Landau-Ginzburg model,  should be viewed
as the correct mathematical definition of the triangulated category
of type IIB branes on the theory that corresponds to the above
combinatorial data, even if the said theory itself is not defined
mathematically. Our first observation is the result of Ballard,
Favero and Katzarkov {\cite{BFK12}}, which clarifies the
earlier work of Herbst-Walcher \cite{HW}.

\begin{theorem}\label{samecat}
The category $D_B(K,c;\Sigma)$ does not depend on $\Sigma$ in the sense that
for any two regular simplicial fans $\Sigma_+$ and $\Sigma_-$  as above there is an equivalence of triangulated categories
$$
D_B(K,c;\Sigma_+)=D_B(K,c;\Sigma_-).
$$
\end{theorem}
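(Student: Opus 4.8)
The plan is to reduce the statement to variation of GIT quotients for the $\hat G$-action on $\Cc^{K^\vee_{(1)}}$ and then invoke the window-shift machinery of Ballard--Favero--Katzarkov. First I would recall that a regular simplicial fan $\Sigma$ refining $K^\vee$ together with the invisible-ray convention is precisely the combinatorial shadow of a GIT stability condition for $\hat G$ acting on $V:=\Cc^{K^\vee_{(1)}}$: the open set $U_\Sigma$ is the semistable locus $V^{ss}(\ell_\Sigma)$ for a character $\ell_\Sigma$ of $\hat G$ determined by the support function of $\Sigma$, and $[U_\Sigma/\hat G]$ is the GIT quotient stack. The potential $C$ is a $\hat G$-semiinvariant function on all of $V$ (invariant up to the character cutting out $\Ll$), so the data $(V,\hat G,C,\ell_\Sigma)$ is a gauged Landau--Ginzburg model in the sense of \cite{BFK12}, and $D_B(K,c;\Sigma)$ is its category of equivariant factorizations localized by acyclics.

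Next I would observe that any two regular simplicial fans $\Sigma_+,\Sigma_-$ on $K^\vee$ can be connected by a sequence of fans, each obtained from the previous by a single bistellar flip (a ``wall crossing''), so it suffices to treat the case where $\ell_{\Sigma_+}$ and $\ell_{\Sigma_-}$ lie in adjacent GIT chambers separated by a single wall $W$. This is the standard reduction in toric VGIT; the combinatorial input is just that the secondary fan of the $\hat G$-action is connected in codimension one, which holds because all the $\Sigma$ in question triangulate the same cone $K^\vee$. For such a one-wall crossing, \cite[Theorem on derived categories of VGIT / Thm.~3.5.2]{BFK12} produces a fully faithful functor between the two factorization categories coming from a ``window'' subcategory $\mathcal W$ of $\hat G$-equivariant factorizations on the unstable-locus-deleted space, together with the assertion that the functor is an equivalence precisely when a numerical inequality between the weights of the $\Cc^*$-fixed locus $Z_W$ on $W$ and the canonical weight is an equality --- and in the Calabi--Yau / crepant situation this inequality is an equality. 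I would therefore need to check that the wall crossings here are crepant: this is where the Gorenstein reflexive hypothesis enters, since $\deg^\vee$ being the degree element forces the sum of the $\hat G$-weights of the coordinates on the two sides of any wall to balance against the weight of $C$ (equivalently, the canonical class of $[U_\Sigma/\hat G]$ pulled back along the wall is trivial in the relevant direction). Concretely, for the wall corresponding to a circuit $\sum_{n\in I_+}a_n n=\sum_{n\in I_-}b_n n$ among the $K^\vee_{(1)}$, pairing with $\deg$ and using $\la\deg,n\ra=1$ gives $\sum a_n=\sum b_n$, which is exactly the Calabi--Yau balancing that makes the window functor an equivalence rather than merely a semiorthogonal embedding.

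Assembling these: pick a chain $\Sigma_+ = \Sigma^{(0)},\Sigma^{(1)},\dots,\Sigma^{(N)}=\Sigma_-$ with consecutive terms related by a single wall crossing, apply the crepant VGIT equivalence of \cite{BFK12} at each step to get $D_B(K,c;\Sigma^{(i)})\cong D_B(K,c;\Sigma^{(i+1)})$, and compose. The main obstacle I anticipate is not the wall-crossing equivalence itself --- that is a black box from \cite{BFK12} once set up --- but rather the bookkeeping needed to present $D_B(K,c;\Sigma)$ genuinely as the VGIT input: one must verify that the group $\hat G$ defined by \eqref{hatg} is exactly the Picard-type torus whose characters parametrize the GIT chambers, that $U_\Sigma$ is the semistable locus for the character attached to $\Sigma$'s support function, and that the invisible-ray/irrelevant-ideal conventions match the unstable loci on the nose (including that the unused rays of $K^\vee_{(1)}$ contribute trivially). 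A secondary subtlety is that \cite{BFK12} is stated for $\Cc^*$- or reductive-group-equivariant factorizations, so one should confirm that the extra $\Cc^*=\hat G/G$ grading used in Definition \ref{maincat} is compatible with the equivariant structure used in their wall-crossing theorem --- but since $\hat G$ is itself a torus this is routine, and the grading is preserved by all the window functors because they are defined by weight truncation.
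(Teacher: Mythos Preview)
Your proposal is correct and follows essentially the same approach as the paper: reduce to a sequence of simple wall crossings (bistellar flips), identify the circuit relation $\sum_i \alpha_i v_i + \sum_j \beta_j u_j = 0$ at each wall, pair with $\deg$ to get the balancing condition $\sum_i \alpha_i + \sum_j \beta_j = 0$ from $\la \deg, n\ra = 1$ for all $n\in K^\vee_{(1)}$, and invoke \cite[Theorem 3.5.2]{BFK12} in the flop case. The paper's proof is just a slightly more hands-on version of your VGIT framing --- it constructs the interpolating path explicitly via $\psi_t = t\psi_+ + (1-t)\psi_-$ rather than appealing to connectedness of the secondary fan, but the substance is identical.
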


\begin{proof}
Since $\Sigma_\pm$ are regular fans, there exist functions $\psi_\pm:K^\vee_{(1)}\to\Rr$ such 
that the bottom of the convex hull of $\{(v,\psi_\pm(v)),v\in K^\vee_{(1)}\}$ in $N_\Rr\oplus \Rr$ 
gives $\Sigma_\pm$.
We then consider 
$\psi_t=t\psi_++(1-t)\psi_-$. As $t$ varies from $0$ to $1$, we get a finite list of fans $\Sigma_t$ that
interpolate between $\Sigma_-$ and $\Sigma_+$. By perturbing $\psi_\pm$ slightly, we may assume
that all of the degenerate fans have exactly one nonsimplicial cone, i.e. that we have (a finite list of)
simple toric flops. Thus it suffices to assume that $\Sigma_+$ and $\Sigma_-$ differ by one
simple flop.

\smallskip
This means that there is a subset  $\{v_1,\ldots,v_m,u_1,\ldots,u_n\}\subseteq K^\vee_{(1)}$ such
that the only difference between $\Sigma_+$ and $\Sigma_-$ is due to
a different way they subdivide $\Rr_{\geq 0}\{v_1,\ldots,v_m,u_1,\ldots,u_n\}$.
Specifically, the cones in $\Sigma_+$ involve all $v_i$ and all but one $u_j$; the cones of $\Sigma_-$
involve all but one $v_i$ and all $u_j$.\footnote{The cones may involve other elements of $K^\vee_{(1)}$.}
There is a unique linear relation on $v_i$ and $u_j$
\begin{equation}\label{onepar}
\sum_{i=1}^m \alpha_i v_i  + \sum_{j=1}^n\beta_j u_j = {\bf 0}
\end{equation}
where $\alpha_i\in \Zz_{>0}$ and $\beta_j\in \Zz_{<0}$.

\smallskip
We now consider the one-parameter subgroup of $G$ (and $\hat G$)
given by $\lambda(v_i)=t^{\alpha_i}$ and
$\lambda(u_j)=t^{\beta_j}$ with $t\in \Cc^*$. Other
coordinates $\lambda(v)$ are set to $1$. It is easy to see that it
fits into the setup of Section 3 of \cite{BFK12}. The parameter
$\mu$ of \cite[Theorem 3.5.2]{BFK12} is given by
$$
\sum_{i=1}^m \alpha_i + \sum_{j=1}^n\beta_j.
$$
We see it is equal to zero by taking a pairing of
\eqref{onepar} with $\deg$.

\smallskip
Therefore, this wall-crossing fits the second (flop) case of \cite[Theorem 3.5.2]{BFK12}  and we
get the equivalence of the corresponding factorization categories.
\end{proof}

\begin{remark}
As expected, the triangulated category of the data $(K,K^\vee;c,c^\vee)$ is independent of $c^\vee$. However,
there should be some, yet unknown, construction of the family of such categories  as $c^\vee$ varies. This
family of categories should have a flatness property. Then the above equivalences correspond to the
path in the K\"ahler moduli space of $c^\vee$ between the large K\"ahler limit points that correspond to $\Sigma_+$
and $\Sigma_-$.
\end{remark}

Importantly, we can relate the above defined category $D_B(K,c)$ to the derived category of a Calabi-Yau complete
intersection for any choice of a decomposition
$$
\deg^\vee = t_1+\cdots+t_k.
$$
Specifically, there is the following result,
 due to multiple authors, see \cite{FK14, Isik12,Shi12}.

\begin{theorem}\label{isikthm}
Let $\deg^\vee=t_1+\cdots+t_k$ be a decomposition of $\deg^\vee$ as
in Section \ref{section2}. Let $\Sigma$ be a regular simplicial  fan
in $K^\vee$  considered in that section and $\Xx_{c;\Sigma}$ be the
corresponding complete intersection. Then
$$
D_B(K,c;\Sigma)=D^b(coh-\Xx_{c;\Sigma})
$$
in the sense of equivalence of triangulated categories.
\end{theorem}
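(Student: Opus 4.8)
The plan is to recognize this statement as an instance of the Landau--Ginzburg/complete-intersection correspondence (a relative form of Kn\"orrer periodicity) established, in various guises, by several authors, see \cite{Isik12, Shi12, FK14}, and built on the machinery of \cite{BFK12}; the work is then to arrange the geometric data so that one of those theorems applies directly. First I would unwind the construction of Section \ref{section2}. Because $\Sigma$ satisfies property \eqref{central}, the splitting $U_\Sigma=U_{\overline\Sigma}\times\Cc^k$ exhibits $\Pp_\Sigma=[U_\Sigma/G]$ as the total space $\mathrm{Tot}(\Ee)$ of a rank $k$ vector bundle $\Ee=\Ee_1\oplus\cdots\oplus\Ee_k$ on $\Pp_{\overline\Sigma}$, with fiber coordinates $z(t_1),\dots,z(t_k)$, and the potential
$$
C(\zz)=\sum_{i=1}^k z(t_i)\,\overline h_i
$$
is linear along the fibers. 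The functions $\overline h_i$ are $G$-semi-invariant and hence descend to (twisted) global sections on $\Pp_{\overline\Sigma}$; together they assemble into a section $s$ of a rank $k$ bundle on $\Pp_{\overline\Sigma}$ (namely $\Ee^\vee$ up to the twist by $\Ll$) whose zero locus is exactly the complete intersection $\Xx_{c;\Sigma}$. For $c$ generic (e.g.\ $\Delta$-nondegenerate) this section is regular, so $\Xx_{c;\Sigma}$ is a smooth proper Deligne-Mumford substack of codimension $k$.

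The key step is to identify the auxiliary torus of Definition \ref{maincat}. Since the ray generators of $K$ pair to $1$ with $\deg^\vee$ and to integers with everything, $\deg^\vee$ is primitive, so $\hat G/G\cong\Cc^*$; and one checks directly from \eqref{hatg} that this $\Cc^*$ acts on $U_\Sigma=U_{\overline\Sigma}\times\Cc^k$ by the weight-one simultaneous dilation of all the fiber coordinates $z(t_i)$ and trivially on $U_{\overline\Sigma}$ --- the underlying reason being $\sum_i t_i=\deg^\vee$, so that the cocharacter with $a_{t_i}=1$ and $a_n=0$ otherwise generates $\hat G/G$. (This is also precisely why $\hat G$ is defined via $\Ann(\deg^\vee)$: it makes $C$ semi-invariant of a single weight, namely the weight of $\Ll$.) Passing to $\Pp_\Sigma=[U_\Sigma/G]$, this $\Cc^*$ is the fiberwise dilation of $\mathrm{Tot}(\Ee)$, under which $C$ is homogeneous of weight one. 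Since $\hat G$-equivariant sheaves on $U_\Sigma$ are $\Cc^*$-equivariant sheaves on $\Pp_\Sigma$, the category $D_B(K,c;\Sigma)$ is precisely the $\Cc^*$-equivariant derived category of matrix factorizations of $(\mathrm{Tot}(\Ee),C)$ for this dilation action.

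With the dictionary $(Y,\Ee,s)=(\Pp_{\overline\Sigma},\Ee,(\overline h_i)_i)$ in place, the cited theorems apply: for a smooth proper stack $Y$, a vector bundle $\Ee$ on it, a regular section $s$ cutting out $Z\subset Y$, and the fiberwise weight-one $\Cc^*$-action on $\mathrm{Tot}(\Ee)$ with the fiber-linear potential given by pairing against $s$, the $\Cc^*$-equivariant matrix factorization category of that pair is equivalent to $D^b(coh-Z)$. Taking $\mathrm{Tot}(\Ee)=\Pp_\Sigma$, potential $C$, and $Z=\Xx_{c;\Sigma}$ yields $D_B(K,c;\Sigma)=D^b(coh-\Xx_{c;\Sigma})$. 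The routine verifications are: that the cited statements are phrased for (or readily extend to) smooth proper Deligne-Mumford stacks, which is unproblematic here since all lattices in question are torsion-free and $\Pp_{\overline\Sigma}$, $\Pp_\Sigma$ are honest smooth proper DM stacks; that genericity of $c$ makes $s$ a regular section of the correct codimension; and careful bookkeeping of the twist by $\Ll$ and of the equivariance under the full group $\hat G$ rather than only the residual $\Cc^*$. The genuine technical content --- the Koszul-duality/Kn\"orrer-periodicity argument underlying \cite{Isik12, Shi12, FK14, BFK12} --- is the one ingredient I would import wholesale rather than reprove, and it is the main obstacle to a fully self-contained treatment.
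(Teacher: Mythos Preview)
Your proposal is correct and, in fact, considerably more detailed than what the paper provides. The paper does not prove this theorem at all: it simply states the result and attributes it to the literature, citing \cite{FK14, Isik12, Shi12}. Your write-up spells out explicitly how the toric data of Section~\ref{section2} fit into the hypotheses of those references --- the identification of $\Pp_\Sigma$ as the total space of a rank~$k$ bundle over $\Pp_{\overline\Sigma}$, the fiber-linearity of $C$, and the splitting of $\hat G/G\cong\Cc^*$ via the cocharacter $\lambda(t_i)=t$, $\lambda(n)=1$ otherwise, acting as the fiberwise dilation --- which is exactly the unpacking one would want but which the paper leaves entirely to the reader.
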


\begin{remark}
We can thus view the category $D_B(K,c)=D_B(K,c;\Sigma)$ as a
primary object of interest.  In a somewhat vague sense we view the above Theorem \ref{isikthm}
as large K\"ahler limit description of $D_B(K,c)$ as the size
of the coefficients $c^\vee(t_i)$
for the mirror coefficient function
$$
c^\vee : K^\vee_{(1)}\to\Cc
$$
is large compared to the other values. In what follows we will describe
another explicit geometric realization of $D_B(K,c)$, this time
coming from a more complicated decomposition of $\deg^\vee$ which
corresponds to a more complicated large K\"ahler limit.
\end{remark}

\begin{remark}
If the centrality assumption on the fan \eqref{central} does not hold, then we expect
some interesting structures along the lines of exoflops of Aspinwall \cite{A09,A15}.
However, these are not the focus of the current paper.
\end{remark}

\begin{remark}
There must be a relation between the Hochschild cohomology of the
category $D_B(K,c)$ and the stringy cohomology of
$(K,K^\vee;c,c^\vee)$ in \cite{Borisov.stringy}.
However, we can not presently formulate a precise conjecture,
beyond the basic equality of dimensions.
\end{remark}

\section{Review of Kuznetsov's Clifford double mirrors of complete intersections of quadrics in $\Cc\Pp^n$.}\label{K.recall}
The goal of this section is to review the construction due to
Kuznetsov of noncommutative (Clifford) double mirrors of complete
intersections of quadrics in projective spaces. One
can view this paper as an extension of Kuznetsov's
construction to more general toric varieties, as well as an
explanation of the combinatorics behind it. We are interested in the
Calabi-Yau case of the construction, so we will be working with the
intersection of $k$ quadrics in $\Cc\Pp^{2k-1}$.

\medskip
Let $f_1,\ldots, f_k$ be generic homogeneous degree two polynomials
in the variables {$\xx:=(x_1: \ldots
:x_{2k})$}. Consider the complete intersection $Y\subset
\Cc\Pp^{2k-1}$
$$
Y=\{f_1({\bf x})=f_2({\bf x})=\ldots=f_k({\bf x})=0\}.
$$
The variety $Y$ is smooth and Calabi-Yau by Bertini's theorem and the adjunction formula.
Note that the Cayley hypersurface of this complete intersection $Y$ may be thought of as the
generic bi-degree $(1,2)$ divisor in $\Cc\Pp^{k-1}\times \Cc\Pp^{2k-1}$ given by
$$
u_1f_1({\bf x}) + \cdots + u_k f_k({\bf x}) = 0.
$$
We denote this hypersurface by $X$.

\medskip
The double mirror noncommutative variety can be described as follows.
The polynomial
$$
C({\bf u},{\bf x})=u_1f_1({\bf x}) + \cdots + u_k f_k({\bf x})
$$
allows one to define a graded noncommutative
ring which is the quotient of the free ring
in $k$  commuting central variables $u_i$ and $2k$ noncommuting
variables $y_j$
$$
\Aa =\Cc[u_1,\ldots,u_k]\{y_1,\ldots,y_{2k}\}/\la (\sum_{i=1}^{2k} x_i y_i)^2 + C({\bf u},{\bf x}),~{\bf x}\in \Cc\Pp^{2k-1}\ra.
$$
This is a finitely generated algebra over the homogeneous coordinate ring $\Cc[u_1,\ldots, u_k]$
of $\Cc\Pp^{k-1}$. It is equipped with a half-integer grading such that the degree of $u_i$ is $1$ and the degree of $y_j$
is $\frac 12$. By localizing for the central variables $u_i$ and taking the degree zero component
we get a sheaf of even Clifford algebras $\Bb_0$ over $\Cc\Pp^{k-1}$. Specifically, if we introduce a vector
bundle $\Ee=\Oo^{n+1}$ over $\Cc\Pp^{k-1}$ then $\Bb_0$
is the direct sum of vector bundles
$$
\Bb_0=\Oo \oplus ( \wedge^2\Ee)(1)  \oplus \ldots \oplus( \wedge^{2k}\Ee)(k)
$$
with a certain Clifford multiplication structure.
\footnote{We identify the exterior algebra with a Clifford algebra via the composition
of the embedding into tensor algebra and projection.}

\medskip
The following key result is due to Kuznetsov. It generalizes the work of Kapranov \cite{Kap89} and relates the bounded derived category of the Cayley
hypersurface $X$ in $\Cc\Pp^{k-1}\times \Cc\Pp^n$ with the bounded derived category of sheaves of
$\Bb_0$-modules on $\Cc\Pp^{k-1}$. It is the consequence of Theorem 4.2 in \cite{Kuz08}.
\begin{theorem}
Denote by $p:X\to \Pp^{k-1}$ the natural projection with quadric fibers of dimension
$n-1$. The derived category $D^b(X)$ admits a semiorthogonal
decomposition
$$
D^b(X) = \la D^b(\Pp^{k-1},\Bb_0), p^*D(\Pp^{k-1})\otimes \Oo_{X/\Pp^{k-1}}(1),
\ldots,
p^*D(\Pp^{k-1})\otimes \Oo_{X/\Pp^{k-1}}(2k-2)
\ra
$$
in the sense that the orthogonal complement of $\la  p^*D(\Pp^{k-1})(1),
\ldots,
p^*D(\Pp^{k-1})(2k-2)\ra$ is naturally equivalent to $D^b(\Pp^{k-1},\Bb_0)$.
\end{theorem}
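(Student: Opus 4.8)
The plan is to exhibit $p\colon X\to\Pp^{k-1}$ as a flat quadric fibration of the type treated by Kuznetsov and then to quote Theorem~4.2 of \cite{Kuz08}; from the point of view of this paper the only work is to match the geometric and the Clifford data and to verify the regularity hypothesis, so I would proceed as follows.

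First I would describe the fibration intrinsically. With $S:=\Pp^{k-1}$ and $\Ee:=\Oo^{2k}$ on $S$, the bidegree $(1,2)$ equation $u_1f_1(\xx)+\cdots+u_kf_k(\xx)=0$ is precisely an $\Oo_S(1)$-valued quadratic form $q\colon\Ee\to\Oo_S(1)$, and $X=\{q=0\}\subset\Pp_S(\Ee)=\Pp^{k-1}\times\Pp^{2k-1}$, with $p$ the restriction of the bundle projection; its fibers are the quadrics $\{\sum_i u_if_i=0\}$ of dimension $2k-2$. Because $f_1,\ldots,f_k$ are generic, applying Bertini to the discriminant locus shows that $q$ has corank at most one at every point of $S$, so $p$ is flat and all its degenerate fibers are corank-one quadrics --- exactly the regularity condition under which the quadric-bundle results of \cite{Kuz08} apply.

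Next I would match the noncommutative side. To $q$ one attaches its sheaf of even Clifford algebras on $S$; unwinding the exterior-algebra model of the Clifford algebra identifies this sheaf with $\Oo\oplus(\wedge^2\Ee)(1)\oplus\cdots\oplus(\wedge^{2k}\Ee)(k)$ equipped with Clifford multiplication, i.e.\ with the sheaf $\Bb_0$ described above --- a routine check of multiplication tables. With these two identifications in place, Theorem~4.2 of \cite{Kuz08} delivers the asserted semiorthogonal decomposition verbatim, together with the identification of $D^b(\Pp^{k-1},\Bb_0)$ as the right orthogonal of $\la p^*D(\Pp^{k-1})(1),\ldots,p^*D(\Pp^{k-1})(2k-2)\ra$ inside $D^b(X)$.

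For completeness I would also sketch what that citation contains, as it is the conceptual heart. One constructs a relative spinor sheaf $\Ss$ on $X$ from the relative Clifford/Koszul resolution on $\Pp_S(\Ee)$; it carries a right $p^*\Bb_0$-module structure and yields a functor $\Phi\colon D^b(\Pp^{k-1},\Bb_0)\to D^b(X)$. Three points must then be established: (i) over each point of $S$ the sheaf $\Ss$ together with $\Oo_{X/S}(1),\ldots,\Oo_{X/S}(2k-2)$ restrict to Kapranov's full exceptional collection \cite{Kap89} on the quadric fiber, so by base change and d\'evissage these objects generate $D^b(X)$ over $p^*D^b(S)$; (ii) the twists $p^*D(S)\otimes\Oo_{X/S}(i)$, $1\le i\le 2k-2$, are exceptional, mutually semiorthogonal and left orthogonal to $\Phi(D^b(\Pp^{k-1},\Bb_0))$, which follows from fiberwise cohomology vanishing for the relevant line bundles and for the relative $\Hom$ out of $\Ss$; and (iii) $\Phi$ is fully faithful, which reduces to computing the relative sheaf $p_*\mathcal{H}om(\Ss,\Ss)$, identifying it with $\Bb_0$ as a sheaf of algebras, and checking the vanishing of the higher relative Ext-sheaves. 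Step (iii), carried out while tracking the Clifford product and --- crucially --- the behaviour over the corank-one degeneracy locus, is the main obstacle; the genericity hypothesis is precisely what makes it go through, and it is the mechanism by which the two fiberwise spinor bundles on the even-dimensional quadric fibers are glued into the single $S$-linear category $D^b(\Pp^{k-1},\Bb_0)$ instead of contributing two separate exceptional objects.
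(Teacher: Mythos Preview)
Your approach is the same as the paper's: the theorem is stated there simply as ``the consequence of Theorem 4.2 in \cite{Kuz08}'', with no further proof given. Your additional unpacking of Kuznetsov's argument is a reasonable expansion.

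There is, however, one concrete error. You assert that for generic $f_1,\ldots,f_k$ the quadratic form $q$ has corank at most one at every point of $S=\Pp^{k-1}$, and that this ``regularity'' is the hypothesis needed for \cite{Kuz08}. Neither claim is correct. The locus of corank $\geq 2$ symmetric $2k\times 2k$ matrices has codimension $3$, so for $k\geq 4$ a generic linear system meets it in a nonempty locus of dimension $k-4$ (finitely many points when $k=4$, a curve when $k=5$, etc.). Fortunately, Theorem~4.2 of \cite{Kuz08} requires only \emph{flatness} of the quadric fibration, not any bound on the corank; flatness here amounts to the quadratic form not vanishing identically over any point of $S$, which for linearly independent $f_i$ is automatic. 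So your verification of hypotheses should be replaced by this simpler flatness check, and the remark in your sketch of step~(iii) that genericity is ``precisely what makes it go through'' over the degeneracy locus should be dropped --- Kuznetsov's argument handles arbitrary corank as long as the fibration is flat.
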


It was shown by Kuznetsov that $D^b(\Pp^{k-1},\Bb_0)$ is equivalent to $D^b(Y)$. Instead of Kuznetsov's
original proof that uses Lefschetz decompositions and homological projective duality, we will sketch
a proof of this statement that will generalize later to more sophisticated examples.

\begin{theorem}
There is an equivalence of categories
$$
D^b(\Pp^{k-1},\Bb_0)=D^b(Y).
$$
\end{theorem}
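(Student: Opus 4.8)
The plan is to realize both categories as derived categories of matrix factorizations attached to the same Landau-Ginzburg data, and then invoke the comparison machinery of \cite{BFK12}. Concretely, the Cayley hypersurface $X$ sits inside $\Cc\Pp^{k-1}\times\Cc\Pp^{2k-1}$, and the noncommutative variety $(\Cc\Pp^{k-1},\Bb_0)$ should be thought of as a $\Zz/2$-graded (matrix factorization) model for the total space of $\Oo_{\Cc\Pp^{k-1}}(-1)^{\oplus 2k}$ with potential $C({\bf u},{\bf x})=\sum_i u_i f_i({\bf x})$, where now $x_i$ are fiber coordinates. First I would set up the affine/Cox picture: let $V=\Cc^{2k}$ with coordinates $x_1,\ldots,x_{2k}$, let the $u_i$ be the homogeneous coordinates on $\Cc\Pp^{k-1}$, and consider the quadratic function $Q=(\sum_i x_i y_i)^2$ appearing in the definition of $\Aa$ as the universal ``square'' that linearizes the Clifford relation. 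The key point is that the even Clifford algebra $\Bb_0$ is by construction the degree-zero part (after localizing the central $u_i$) of $\Aa$, and $\Aa$ itself is exactly the (Morita-type) endomorphism algebra governing factorizations of $C$ on the relevant $\Cc^*$-equivariant space.

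Next I would carry out the two halves of the comparison. On one side, I would show $D^b(\Cc\Pp^{k-1},\Bb_0)$ is equivalent to a category of $\hat G$-equivariant matrix factorizations of the potential $C$ on (an open subset of) $\mathrm{Tot}(\Oo(-1)^{\oplus 2k})\to\Cc\Pp^{k-1}$: this is essentially Knörrer periodicity / the Clifford-algebra description of matrix factorizations of a quadratic form, applied fiberwise over $\Cc\Pp^{k-1}$, packaged so that the grading by $\deg^\vee$ is respected; the sheaf of even Clifford algebras $\Bb_0=\Oo\oplus(\wedge^2\Ee)(1)\oplus\cdots\oplus(\wedge^{2k}\Ee)(k)$ is precisely the pushforward to $\Cc\Pp^{k-1}$ of the structure sheaf of the matrix-factorization category, and modules over it match equivariant factorizations. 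On the other side, $Y=\{f_1=\cdots=f_k=0\}\subset\Cc\Pp^{2k-1}$ fits the framework of Theorem \ref{isikthm}: the complete intersection $Y$ (with its anticanonical-bundle structure) has $D^b(coh\text{-}Y)\cong D_B(K,c;\Sigma)$ for the reflexive Gorenstein cone $K$ whose degree decomposition is $\deg^\vee=t_1+\cdots+t_k$ coming from the $k$ quadrics, and this $D_B(K,c;\Sigma)$ is, by Definition \ref{maincat}, a category of $\hat G$-equivariant matrix factorizations of the Cayley potential $C$. By Theorem \ref{samecat} (variation of GIT / the flop case of \cite[Theorem 3.5.2]{BFK12}) this factorization category is independent of the chamber $\Sigma$, so it may be computed in the chamber adapted to the Clifford/linearized presentation rather than the one adapted to the complete intersection $Y$.

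Thus the strategy reduces to: (1) identify the LG data $(U_\Sigma/\hat G; C)$ underlying $Y$ via Theorem \ref{isikthm}; (2) identify the LG data underlying $(\Cc\Pp^{k-1},\Bb_0)$ via a fiberwise Clifford/Knörrer argument; (3) observe these two sets of data are two GIT chambers for the \emph{same} $(K,K^\vee,c)$ — here the two decompositions of $\deg^\vee$ are the ``$r=0$'' decomposition (all $t_j$, giving $Y$) and the ``$r=k$'' decomposition (all $s_i$, giving the Clifford variety), in the language of \eqref{decintro}; (4) apply Theorem \ref{samecat} to conclude $D^b(\Cc\Pp^{k-1},\Bb_0)=D_B(K,c)=D^b(Y)$. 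I expect step (2) to be the main obstacle: one must carefully match the \emph{grading} conventions — the half-integer grading on $\Aa$ with $\deg u_i=1$, $\deg y_j=\tfrac12$ versus the $\hat G$-equivariant structure and the line bundle $\Ll$ of Definition \ref{maincat} — and verify that passing to the degree-zero part after inverting the $u_i$ corresponds exactly to the semiorthogonal truncation that isolates the Clifford component (as in Kuznetsov's theorem quoted just above), so that no spurious $p^*D(\Cc\Pp^{k-1})\otimes\Oo_{X/\Cc\Pp^{k-1}}(j)$ pieces survive. Verifying the $\mu=0$ (flop, not wall-crossing-with-window-shift) condition for the relevant one-parameter subgroup — which here is the statement that the $2k$ quadratic weights and the $k$ linear weights sum to zero, exactly the Calabi-Yau condition — is the conceptual heart, but that is furnished cleanly by pairing with $\deg$ as in the proof of Theorem \ref{samecat}.
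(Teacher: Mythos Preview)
Your proposal is correct and follows essentially the same three-step strategy as the paper: identify $D^b(Y)$ with a factorization category via Isik--Shipman, identify $D^b(\Pp^{k-1},\Bb_0)$ with a factorization category on the other GIT chamber, and bridge the two chambers via the $\mu=0$ flop case of \cite[Theorem~3.5.2]{BFK12}. The only point worth sharpening is your step~(2): the paper does not invoke a direct fiberwise Kn\"orrer periodicity, but instead passes through the intermediate category $D^b(X)$ of the Cayley hypersurface and compares \emph{two} semiorthogonal decompositions of it---Kuznetsov's (with complement $D^b(\Pp^{k-1},\Bb_0)$) and the relative Orlov decomposition (with complement $D^b(coh[V_+/\Cc^*],C)$)---whose ``spurious'' pieces $p^*D(\Pp^{k-1})\otimes\Oo_{X/\Pp^{k-1}}(j)$, $1\le j\le 2k-2$, coincide, forcing the complements to agree. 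You allude to this mechanism in your final paragraph, so the gap is only one of emphasis.
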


\begin{proof}
We can relate the derived category of $X$ to the category of
factorizations of the corresponding affine bundle over $\Pp^{k-1}$.
Specifically, let $X_+\subset V_+$  be the singular quadric in
$$
V_+=\Cc\Pp^{k-1}\times \Cc^{2k}
$$
given by $C({\bf u},{\bf x})=0$. This variety $X_+$ admits a $\Cc^*$ action which scales
${\bf x}$. We consider the category of factorizations of $C$ on $U_+$
$$
D^b(coh[V_+/\Cc^*],C).
$$
The relative version of the famous theorem of Orlov \cite{Orl09} gives a semiorthogonal
decomposition of $D^b(X)$
$$
D^b(X) = \la  p^*D(\Pp^{k-1})(1),\ldots,
p^*D(\Pp^{k-1})(2k-2), D^b(coh[V_+/\Cc^*],C)\ra
$$
because the Gorenstein parameter $a$ is given by $(2k-2)$ by adjunction formula.
This implies that
$$
D^b(coh[V_+/\Cc^*],C)=D^b(\Pp^{k-1},\Bb_0).
$$

\medskip
For the second step, which is a particular case of Theorem \ref{samecat}, 
the work of Ballard, Favero and Katzarkov \cite{BFK12} allows one
to pass from $X_+\subset V_+$ to the hypersurface $X_-\subset V_-=\Cc^k\times \Cc\Pp^{2k-1}$ given by
$$
C({\bf u},{\bf x})=0,
$$
together with the action of $\Cc^*$ on ${\bf u}$. Indeed, we may
consider two open subsets $U_\pm\subset \Cc^k\times \Cc^{2k}$
defined by ${\bf x}\neq {\bf 0}$ and ${\bf u}\neq {\bf 0}$
respectively. We define the group $\hat G=\Cc^*\times\Cc^*$ that
scales both sets of coordinates. Then the categories of
factorizations
$D^b(coh[V_+/\Cc^*],C)$ and
$D^b(coh[V_-/\Cc^*],C)$ are equivalent to those of $D^b(coh[U_+/\hat
G],C)$ and $D^b(coh[U_-/\hat
G],C)$ respectively. We then have by
\cite{BFK12}
$$
D^b(coh[U_+/\hat G],C)=D^b(coh[U_-/\hat G],C)
$$
due to a certain ``conservation of canonical class". Specifically,
the subgroup $G$ of $\hat G$ that preserves $C({\bf u},{\bf x})$ acts by
$$
\lambda_t {\bf u} = t^{-2}{\bf u},~\lambda_t {\bf x} = t{\bf x}.
$$
Therefore, the weight of it on the anticanonical bundle of
$\Cc^{k}\times\Cc^{2k}$ restricted to the fixed point $({\bf 0,\bf
0})$ is $\sum_{i=1}^k(-2)+\sum_{j=1}^{2k}1=0$. This is the condition
needed for the equivalence, see \cite[Theorem 3.5.2]{BFK12}.

\medskip
Thus we get
$$
D^b(\Pp^{k-1},\Bb_0) =D^b(coh[V_-/\Cc^*],C).
$$
Finally, we observe that $D^b(coh[V_-/\Cc^*],C)$ is equivalent to the derived category of the
complete intersection $D^b(Y)$ by the work of Isik \cite{Isik12} and Shipman \cite{Shi12}.
\end{proof}

\section{Clifford double mirror construction.}\label{defcliff}
In this section we generalize the Kuznetsov's example by uncovering
the underlying toric geometry. Specifically, we construct
noncommutative double mirrors of Calabi-Yau complete intersections
in Gorenstein toric varieties, given certain natural combinatorial
data. These noncommutative mirrors consist of  a pair $(\Ss, \Bb_0)$
where $\Ss$ is a smooth toric DM stack and $\Bb_0$ is a sheaf of
algebras which serves as the structure sheaf of
$(\Ss, \Bb_0)$. The coherent sheaves on $(\Ss, \Bb_0)$ are coherent
sheaves on $\Ss$ which are also $\Bb_0$-modules.

\medskip
We work in the notations of Sections \ref{section2} and  \ref{philosophy}. Namely,
we have dual Gorenstein
cones $K$ and $K^\vee$ in lattices $M$ and $N$, with degree elements $\deg$
and $\deg^\vee$. We denote by $k$ the index $\la \deg,\deg^\vee\ra$.
There is also given a generic coefficient function
$$
c:K_{(1)}\to \Cc.
$$

\medskip
\subsection{Definition of $(\Ss,\Bb_0)$.}
The key idea of this paper is that Kuznetsov's and related examples correspond to
the decompositions  of $\deg^\vee$
\begin{equation}\label{dec2}
\deg^\vee = \frac{1}{2}(s_1 + \cdots +
s_{2k})
\end{equation} with $s_i \in K^\vee_{(1)}$. We assume the elements
$s_1,\ldots,s_{2k}$
are $\Rr$-linearly independent. Moreover,
we assume that there exists (and is chosen) a  regular simplicial fan
$\Sigma$
with support $K^\vee$ and rays based on $K^\vee_{(1)}$ such that

\begin{equation}\label{central2}
\emph{All maximum dimensional cones of $\Sigma$ contain $s_1,\ldots,s_{2k}$.}
\end{equation}

We call this \emph{the centrality assumption} on $\Sigma$.
As usual, we will also denote by $\Sigma$ the corresponding simplicial complex
on the set $K^\vee_{(1)}$.
Note that in the case of decompositions that correspond to complete intersections
considered in Section \ref{section2} the analogous centrality condition \eqref{central} can always
be assured. It may no longer be the case in this setting, even for $k=1$.

\medskip
To orient the reader, the idea of our construction is the following.
The centrality assumption \eqref{central2} allows us to view the resolution $\Pp_\Sigma$
of ${\rm Spec}(\Cc[K\cap M])$ as a vector bundle of rank $2k$ over a toric base.
Then the coefficient function $c$ gives a quadric section of this vector bundle
in the sense of Kuznetsov \cite{Kuz08}. This defines a sheaf of even Clifford
algebras over the base of the fibration.

\medskip
Unfortunately, there are some inevitable technical difficulties
that need to be overcome
to make the above picture precise. First of all, we need to work with smooth
toric DM stacks, rather than schemes. Second, we need to be careful in our choice
of the lattice for the base of the fibration as described below.

\medskip
Let $U_\Sigma$ be the Cox open subset of $\Cc^{K^\vee_{(1)}}$ which
consists of maps ${\bf z}:{K^\vee_{(1)}}\to \Cc$ such that the
preimage of $0$ is an element of the simplicial complex $\Sigma$.

\medskip
We define an abelian group
$$
\overline N : = N / \Zz s_1 + \cdots + \Zz s_{2k}+ \Zz \deg^\vee .
$$
Notice that we also quotient by $\deg^\vee$ in the last component. Otherwise, $\overline N$
would always have an order two torsion element which is the image of $\deg^\vee$.
However, the abelian group $\overline N$ may still have torsion elements as explained in Remark \ref{remark: non saturatedness}. Next, we consider the stacky fan $\overline\Sigma$ in $\overline N$ which corresponds to the simplicial
complex in $K^\vee_{(1)}-\{s_1,\ldots,s_{2k}\}$ whose maximum sets are
obtained from those of $\Sigma$ by removing all of $s_i$.
We immediately observe that the centrality condition \eqref{central2} implies that the natural identification
$$
\Cc^{K^\vee_{(1)}} = \Cc^{K^\vee_{(1)}-\{s_1,\ldots,s_{2k}\}}
\times\Cc^{2k}
$$
induces the natural identification
$$
U_\Sigma = U_{\overline \Sigma}\times \Cc^{2k}.
$$

\medskip
We will now discuss the various groups associated to the construction. Recall that
in Section \ref{philosophy} we considered two subgroups $G$ and $\hat G$
of $(\Cc^*)^{K^\vee_{(1)}}$ defined by
$$
G:=\{ { \bf \lambda}:K^\vee_{(1)}\to \Cc^*\Big\vert
\prod_{n\in K^\vee_{(1)}} \lambda(n)^{\la m,n\ra} = 1,~{\rm for~all~}m\in M\}
$$
$$
\hat G:=\{ { \bf \lambda}:K^\vee_{(1)}\to \Cc^*\Big\vert
\prod_{n\in K^\vee_{(1)}} \lambda(n)^{\la m,n\ra} = 1,~{\rm for~all~}m\in {\rm Ann}(\deg^\vee)\}.
$$
The group $G$ is the group that corresponds to the smooth toric DM  stack $\Pp_{\Sigma}
=[U_\Sigma/G_\Sigma]$,
and the group $\hat G$ defines a $\Cc^*$ action on $\Pp_{\Sigma}$.

\medskip
There is an additional group of interest $H$ isomorphic to $\Cc^*$ given by
$$
\lambda(s_i) = t, \lambda(v)=1,~{\rm ~for~all~}v \in
K^\vee_{(1)}-\{s_1,\ldots,s_{2k}\}
$$
with $t\in \Cc^*$. Notice that $H\subseteq \hat G$, because for any $m\in {\rm Ann}(\deg^\vee)$
there holds
$$
\prod_{n\in  K^\vee_{(1)} }\lambda(n)^{\la
m,n\ra} = t^{\sum_i \la m, s_i
\ra} =t^{\la m,2\deg^\vee \ra} = 1.
$$
Analogous calculation shows that $H\cap G=\{\pm 1\}$.

\begin{remark}\label{splitHG}
The group $H$ acts by scaling the fibers of the $\Cc^{2k}$ fibration
$U_{\Sigma}\to U_{\overline\Sigma}$. Note also that the inclusion $H\subseteq \hat G$
is split, since one can consider the evaluation at $s_1$ as the
splitting map $\hat G\to H$. While such splitting is not completely natural, as it requires a choice of one of $s_i$, it will suffice for our purposes.
\end{remark}

\medskip
Notice that there is a natural map $\hat G/H\to (\Cc^*)^{K^\vee_{(1)}-\{s_1,\ldots,s_{2k}\}}$ since the coordinates of $H$ that correspond
to $K^\vee_{(1)}-\{s_1,\ldots,s_{2k}\}$ are equal to $1$.
This gives an action of $\hat G/H$ on $U_{\overline\Sigma}$. We observe that this is
precisely the action used in the definition of the toric DM stack $\Pp_{\overline\Sigma}$.
\begin{lemma}
We denote by $\overline G$ the quotient group $\hat G/H$. Then
the toric DM stack associated to $\overline\Sigma$ in $\overline N$
is given by $[U_{\overline\Sigma}/{\overline G}]$.
\end{lemma}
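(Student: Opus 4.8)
The plan is to identify $[U_{\overline\Sigma}/\overline G]$ with the toric DM stack of the stacky fan $\overline\Sigma$ in $\overline N$ by checking that the data match the Cox construction of the latter, exactly as the analogous statement was verified in Section \ref{section2} for the complete-intersection case. First I would recall that for a stacky fan on a lattice (possibly with torsion in $\overline N$) the toric DM stack is $[U_{\overline\Sigma}/\overline G_{\overline\Sigma}]$, where $\overline G_{\overline\Sigma}$ is the subgroup of $(\Cc^*)^{K^\vee_{(1)}-\{s_1,\ldots,s_{2k}\}}$ consisting of those $\overline\lambda$ with $\prod_{n} \overline\lambda(n)^{\la \overline m, n\ra}=1$ for all $\overline m$ in the dual $\overline M = \operatorname{Ann}(\Zz s_1 + \cdots + \Zz s_{2k} + \Zz\deg^\vee)\subset M$. (Here one uses the pairing of $\overline M$ with the images of $K^\vee_{(1)}-\{s_1,\ldots,s_{2k}\}$ in $\overline N$, which is well-defined since $\overline M$ annihilates the $s_i$ and $\deg^\vee$.) The open set $U_{\overline\Sigma}$ is by definition the one attached to the simplicial complex obtained from $\Sigma$ by deleting the $s_i$, so the irrelevant loci already agree; the only thing to prove is that $\overline G = \hat G/H$ equals $\overline G_{\overline\Sigma}$ as subgroups of $(\Cc^*)^{K^\vee_{(1)}-\{s_1,\ldots,s_{2k}\}}$.

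The core computation is a diagram chase. There is the quotient map $\hat G \to \hat G/H = \overline G$; since the $H$-coordinates on $K^\vee_{(1)}-\{s_1,\ldots,s_{2k}\}$ are trivial, $\overline\lambda\in\overline G$ is determined by the restriction of a lift $\lambda\in\hat G$ to those coordinates, and this restriction map $\hat G/H\hookrightarrow(\Cc^*)^{K^\vee_{(1)}-\{s_1,\ldots,s_{2k}\}}$ is injective because $H\cap G = \{\pm 1\}$ controls the only ambiguity and, more to the point, $H$ maps isomorphically onto the kernel of the restriction (an element of $\hat G$ trivial on all non-$s_i$ coordinates lies in $H$ by the defining equations, using that $\operatorname{Ann}(\deg^\vee)$ is large enough to pin down the values on the $s_i$ up to the one-parameter freedom). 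Then I would show the image is exactly $\overline G_{\overline\Sigma}$: given $\overline\lambda$ satisfying $\prod_n \overline\lambda(n)^{\la \overline m,n\ra}=1$ for all $\overline m\in\overline M$, one extends it to $\lambda$ on all of $K^\vee_{(1)}$ by assigning values $\lambda(s_i)$; the constraint that $\lambda\in\hat G$ (i.e. the product relation holds for all $m\in\operatorname{Ann}(\deg^\vee)$) becomes a system of linear equations on $\log\lambda(s_i)$ which is solvable because $s_1,\ldots,s_{2k},\deg^\vee$ span a saturated-enough piece and $\operatorname{Ann}(\deg^\vee)\cap\operatorname{Ann}(s_i,\,\forall i) = \overline M$, so the hypothesis on $\overline\lambda$ is precisely what is needed for the inhomogeneous system to be consistent. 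Conversely every $\lambda\in\hat G$ restricts to something satisfying the $\overline M$-relations since $\overline M\subset\operatorname{Ann}(\deg^\vee)$ and $\overline M$ kills the $s_i$. This gives $\overline G = \overline G_{\overline\Sigma}$, and hence the stacks coincide.

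This mirrors the argument already carried out in Section \ref{section2} for the group $\overline G$ there (where the quotient was only by $\Zz t_1+\cdots+\Zz t_k$), the new feature being the extra $\Zz\deg^\vee$ in the definition of $\overline N$ and the resulting possible torsion noted in Remark \ref{remark: non saturatedness}. The main obstacle I expect is precisely this torsion: one must be careful that the Cox-type construction of $[U_{\overline\Sigma}/\overline G]$ is the correct toric DM stack when $\overline N$ has torsion, i.e. that one is using the general Borisov–Chen–Smith framework \cite{BCS05} rather than the torsion-free shortcut invoked elsewhere in the paper, and that the group $\overline G$ (which may be disconnected, with component group reflecting the torsion of $\overline N$) is correctly recovered as $\hat G/H$. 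Once the identification of groups is set up carefully as a short exact sequence $1\to H\to \hat G\to \overline G\to 1$ compatible with the character-lattice descriptions, the lemma follows; the surjectivity of $\hat G\to\overline G_{\overline\Sigma}$, i.e. the solvability of the linear system for $\lambda(s_i)$, is the only step requiring genuine (though routine) verification, and the presence of $\deg^\vee$ in the quotient is exactly what makes the half-integer $\tfrac12$ in \eqref{dec2} consistent with having $H\cong\Cc^*$ rather than a double cover.
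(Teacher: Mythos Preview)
Your proposal has a genuine gap precisely at the point you flag as ``the main obstacle'': the torsion in $\overline N$. You set up the comparison by asserting that the BCS group $\overline G_{\overline\Sigma}$ is the subgroup of $(\Cc^*)^{K^\vee_{(1)}-\{s_1,\ldots,s_{2k}\}}$ cut out by the equations $\prod_n \overline\lambda(n)^{\la \overline m,n\ra}=1$ for $\overline m\in \overline M=\Ann(s_1,\ldots,s_{2k},\deg^\vee)$, and that $\hat G/H$ embeds into that same torus. Both claims fail when $\overline N$ has torsion. In the $\Zz_2$-quotient example of Section~\ref{2222tau}, one computes $\hat G=\Cc^*\times\Cc^*\times\{\pm 1\}$, $H=\{(1,\lambda_2,1)\}$, so $\overline G=\hat G/H\cong \Cc^*\times\{\pm 1\}$; but the element $(1,1,-1)$ acts trivially on all $t_j$, so the restriction map $\hat G/H\to(\Cc^*)^{\{t_1,\ldots,t_4\}}$ has nontrivial kernel. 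Your ``subgroup'' description sees only the diagonal $\Cc^*$ and misses the $\{\pm 1\}$, which is exactly the gerbe structure coming from the torsion of $\overline N$.

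The paper's proof avoids this by never trying to realize $\overline G$ inside the small torus. Instead it computes the BCS group as $\Hom(L_1,\Cc^*)$ where $L_1$ is the cokernel of the derived Gale dual map, obtained by first resolving $\overline N$ via $0\to L\to N/\Zz\deg^\vee\to\overline N\to 0$ with $L$ the image of $\Zz^{2k}/\Zz(1,\ldots,1)$, and then dualizing. The identification with $\hat G/H$ is then a comparison of two character groups via compatible short exact sequences, not a comparison of subgroups of a torus. To repair your argument you would need to replace the $\overline M$-equation description with this derived Gale dual computation; once you do that, the rest of your outline (matching open sets, checking the map is the one induced by restriction) goes through, and in fact becomes the paper's proof.
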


\begin{proof}
According to \cite{BCS05} we need to identify $\hat G/H$ with the character group of the derived Gale dual of the complex
\begin{equation}\label{togale}
0\to\Zz^{K^\vee_{(1)}-\{s_1,\ldots,s_{2k}\}}\to \overline N\to 0
\end{equation}
defined by looking at linear combinations of images in $\overline N$ of degree one elements of $K^\vee$.

\medskip
To define the derived Gale dual one needs to resolve $\overline N$ by free groups.
Even in the case when it is already free, it will be convenient to consider
the short exact sequence of free abelian groups
$$
0\to L \to N/\Zz \deg^\vee \to \overline N\to 0.
$$
Note that $\Zz\deg^\vee$ is saturated, since $\deg^\vee$
is the smallest  lattice element in the interior of $K^\vee$.
Thus $N/\Zz \deg^\vee$ is free.
The sublattice $L$ of $N/\Zz\deg^\vee$ is generated by the images of
$s_1,\ldots,s_{2k}$ and is naturally isomorphic to the quotient of $\Zz^{2k}$
by $\Zz(1,\ldots,1)$. We note that $L$ might not be a saturated sublattice of $N/\Zz \deg^\vee$.

\medskip
Following the definitions of \cite{BCS05} we now replace the complex \eqref{togale} above by a quasiisomorphic
complex of free groups
$$
0\to \Zz^{K^\vee_{(1)}-\{s_1,\ldots,s_{2k}\}}\oplus L
 \to N/\Zz\deg^\vee \to 0.
$$
The group that corresponds
to the stacky fan $\overline \Sigma$ in $\overline N$ is given as the character group of the cokernel $L_1$ of the (injective)
dual map
$$
{\rm Ann}(\deg^\vee)\to \Zz^{K^\vee_{(1)}-\{s_1,\ldots,s_{2k}\}}\oplus L^\vee.
$$
We have the exact sequence (in the multiplicative notation)
$$
1\to {\rm Hom}(L_1,\Cc^*) \to (\Cc^*)^{K^\vee_{(1)}-\{s_1,\ldots,s_{2k}\}}
\times (\Cc^*)^{2k}/\{t,\ldots,t\}
\to {\rm Hom}({\rm Ann}(\deg^\vee),\Cc^*).
$$
When compared with the definition of $\hat G$ via
$$
1\to \hat G \to (\Cc^*)^{K^\vee_{(1)}-\{s_1,\ldots,s_{2k}\}}
\times (\Cc^*)^{2k}
\to {\rm Hom}({\rm Ann}(\deg^\vee),\Cc^*)
$$
we recover the needed natural isomorphism between ${\rm Hom}(L_1,\Cc^*)$
and ${\overline G}=\hat G/H$. It is also easy to see that the action on $U_{\overline\Sigma}$
is induced by the map ${\overline G}\to  (\Cc^*)^{K^\vee_{(1)}-\{s_1,\ldots,s_{2k}\}}$.
\end{proof}

We now return to geometry by recalling the definition of a quadric fibration in the sense of
 \cite[Section 3]{Kuz08}.
It is given by the
following data:
\begin{itemize}
\item a smooth algebraic variety $S$;
\item a vector bundle $E\to S$;
\item  a line bundle $\Ll$ on $S$;
\item  an embedding of vector bundles $\sigma:\Ll\to Sym^2(E^\vee)$.
\end{itemize}
This data defines $\pi: \Pp_S(E)\to E$ the projectivization of $E\to
S$. The data of $\sigma$ gives a section of $H^0( \Pp_S(E),
\Oo(2)\otimes \pi^* \Ll^\vee)$ where $\Oo(1)$ be the dual of
the  tautological
line bundle on $\Pp_S(E)$. Kuznetsov denotes by $\Xx\subset
\Pp_S(E)$ the zero locus of $\sigma$ in $ \Pp_S(E)$. Then  the
restriction of $\pi$ to $\Xx$ denoted by $p:\Xx\to S$ is a flat
fibration with (possibly singular) quadric fibers. The embedding
assumption above is crucial. It is equivalent to the flatness of the
fibration $p:\Xx\to S$.

\medskip
In our setting,
the coefficient function $c$ gives a global function on $U_\Sigma$
given by
$$
C({\bf z}) = \sum_{m\in K_{(1)}} c(m) \prod_{n\in K^\vee_{(1)}} z(n)^{\la
m,n\ra}.
$$
We observe that $C$ naturally fits into the definition of quadric fibration as above
which is $\overline G$-equivariant.
\begin{proposition}
The zero set ${\{C=0\}}$ defines a $\overline
G$-equivariant quadric fibration over $U_{\overline\Sigma}$.
\end{proposition}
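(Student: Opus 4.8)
The plan is to write down the four pieces of Kuznetsov's data — the smooth base $S$, the vector bundle $E$, the line bundle $\Ll$ and the map $\sigma\colon\Ll\to Sym^2(E^\vee)$ — and then to verify the one substantive point, that $C$ restricted to the fibers of the projection is a quadratic form. I would take $S=U_{\overline\Sigma}$, which is open in the affine space $\Cc^{K^\vee_{(1)}-\{s_1,\dots,s_{2k}\}}$ and hence smooth, and $E=U_{\overline\Sigma}\times\Cc^{2k}$, the trivial rank-$2k$ bundle whose fiberwise linear coordinates are $z(s_1),\dots,z(s_{2k})$, so that these span $E^\vee$. Under the identification $U_\Sigma=U_{\overline\Sigma}\times\Cc^{2k}$ afforded by the centrality assumption \eqref{central2}, the claim becomes that $C(\zz)$ is homogeneous of degree $2$ in $z(s_1),\dots,z(s_{2k})$.

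The heart is a weight count, which I would carry out as follows. A monomial $\prod_{n\in K^\vee_{(1)}}z(n)^{\la m,n\ra}$ occurring in $C$, with $m\in K_{(1)}$, has degree $\sum_{i=1}^{2k}\la m,s_i\ra$ in the variables $z(s_i)$; by \eqref{dec2} one has $\sum_i s_i=2\deg^\vee$, and $m\in K_{(1)}$ means $\la m,\deg^\vee\ra=1$, so this degree is exactly $\la m,2\deg^\vee\ra=2$. Since each exponent $\la m,s_i\ra$ is moreover a nonnegative integer — $m\in K\cap M$ and $s_i\in K^\vee\cap N$ are paired elements of mutually dual cones — the tuple $(\la m,s_1\ra,\dots,\la m,s_{2k}\ra)$ is, up to permutation, either $(2,0,\dots,0)$ or $(1,1,0,\dots,0)$. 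Grouping the monomials by which pattern occurs, I get
$$
C(\zz)=\sum_{1\le i\le j\le 2k}q_{ij}(\zz')\,z(s_i)z(s_j),
$$
where $\zz'$ runs over the coordinates on $U_{\overline\Sigma}$ and each $q_{ij}$ is a regular function on $U_{\overline\Sigma}$ (indeed a monomial with nonnegative exponents, by the same dual-cone positivity). This is precisely a global section of $Sym^2(E^\vee)$ twisted by a line bundle.

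To handle $\overline G$-equivariance and to identify $\Ll$, I would use the splitting $\hat G\cong\overline G\times H$ of Remark \ref{splitHG}: it realizes $\overline G$ as the subgroup of $\hat G$ with $\lambda(s_1)=1$, acting diagonally on $U_\Sigma=U_{\overline\Sigma}\times\Cc^{2k}$, hence preserving the projection, restricting on $U_{\overline\Sigma}$ to the action that defines $\Pp_{\overline\Sigma}$ (the lemma just proved), and giving $E$ the structure of a $\overline G$-equivariant bundle — the direct sum of the $2k$ equivariant line bundles $\lambda\mapsto\lambda(s_i)$. Because any two elements of $K_{(1)}$ differ by an element of $\Ann(\deg^\vee)$, all monomials of $C$ transform under $\hat G$ by one and the same character; restricting that character to $\overline G$ produces the equivariant line bundle $\Ll$ (up to dual, depending on conventions) and upgrades the section above to a $\overline G$-equivariant map $\sigma\colon\Ll\to Sym^2(E^\vee)$. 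With $S,E,\Ll,\sigma$ so described, $\{C=0\}$ is the $\overline G$-equivariant quadric fibration of \cite{Kuz08} cut out by $C$.

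The only point I do not expect to be formal — and hence the main obstacle — is the requirement in Kuznetsov's definition that $\sigma$ be a \emph{sub-bundle} inclusion, i.e.\ nowhere zero, which as recalled above is equivalent to flatness of the fibration $\{C=0\}\to S$; concretely, it asks that the forms $q_{ij}$ have no common zero on $U_{\overline\Sigma}$. This is exactly the flatness assumption carried throughout the paper, so I would either invoke it as a hypothesis or verify it for the fan $\Sigma$ and generic $c$ at hand; note that it is automatic in Kuznetsov's original setting of $k$ quadrics in $\Cc\Pp^{2k-1}$, where it amounts to the genericity of the $f_i$. Everything else in the statement is the bookkeeping sketched above.
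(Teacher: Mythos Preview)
Your proposal is correct and follows essentially the same approach as the paper's proof: the core is the weight count $\sum_{i=1}^{2k}\la m,s_i\ra=\la m,2\deg^\vee\ra=2$ showing $C$ is quadratic in the $z(s_i)$, together with the observation that all monomials of $C$ transform under $\hat G$ by a common character (since elements of $K_{(1)}$ differ by elements of $\Ann(\deg^\vee)$). You are more explicit than the paper in setting up Kuznetsov's data $(S,E,\Ll,\sigma)$ and in isolating the flatness/sub-bundle condition, which the paper likewise does not prove here but defers to Remark~\ref{flatornot}.
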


\begin{proof}
It follows immediately from the definition of $C$ that it is
invariant under $G$ and is semi-invariant
under $\hat G$. Specifically, the action of $\lambda$ on $C$ scales
each summand by
$$
\prod_{n\in K^\vee_{(1)}}\lambda(n)^{\la m,n\ra}.
$$
Since different $m\in K_{(1)}$ differ by an element of ${\rm
Ann}(\deg^\vee)$, the above term is independent of $m$. Moreover,
for ${\bf \lambda}\in H$ the above term is $t^{\la m, \sum_i
s_i\ra}=t^{\la m,2\deg^\vee\ra}=t^2$. In other words, $C({\bf z})$ has
total degree $2$ in the variables $z(s_1),\ldots, z(s_{2k})$.
\end{proof}

\begin{remark}\label{flatornot}
It is important to point out that  flatness of the quadric fibration
can not be taken for granted. We will see later in Section
\ref{noflatness} that it is not always the case. Since the fibration
is defined by a hypersurface $C=0$, the geometric criterion for
flatness is that all of the fibers are hypersurfaces, i.e. for all points ${\bf \overline z} \in
U_{\overline\Sigma}$ the restriction of $C$ to the fiber of $U_\Sigma\to U_{\overline\Sigma}$
is not
identically zero. We will refer to this condition on our
combinatorial data as the flatness assumption. If true, it can be
typically established by Bertini theorem arguments, so we will
tacitly assume it, unless stated otherwise.
\end{remark}

We are now ready to define the noncommutative variety $(\Ss,\Bb_0)$
that corresponds to the decomposition \ref{dec2}, fan $\Sigma$ and the choice of
the coefficient function $c$.
\begin{definition}\label{defineb0}
The noncommutative variety $(\Ss,\Bb_0)$ is the sheaf of even Clifford algebras $\Bb_0$
over the smooth DM stack $\Ss=[U_{\overline\Sigma}/\overline G]$ associated
to the quadratic function $C({\bf z})$ of the stacky bundle
$$
[U_{\Sigma}/\overline G]\to [U_{\overline\Sigma}/\overline G]
$$
where we use the splitting of $\hat G\to \overline G$ to define the
action of $\overline G$ on $U_{\Sigma}$.
\end{definition}

%We will now describe this construction more explicitly in the equivariant language. For this
%we will decompose the

\begin{remark}\label{explicitdefcliff}
More explicitly, the category of coherent sheaves on $(\Ss,\Bb_0)$ is defined
as the category of $\overline G$-equivariant sheaves over the even part of the
(locally constant) sheaf of Clifford algebras over
$U_{\overline\Sigma}$ given by
\begin{equation}\label{defbs}
\Big(\Oo_{U_{\overline\Sigma}}\{y_1,\ldots,y_{2k}\}/\la (\sum_{i=1}^{2k} z_iy_i)^2+C({\bf z}),~~{\rm for~all~}z_1,\ldots,z_{2k} \ra\Big)_{even}
\end{equation}
where $y_1,\ldots, y_{2k}$ are free noncommuting variables and $z_i$
is a shorthand notation for $z(s_i)$. The subscript \emph{even}
refers to the parity of the number of $y_i$. Here the action of $\overline G$ on
$y_1,\ldots,y_{2k}$ is defined as follows. For an element $\overline \lambda \in \overline G$
consider the lift $\lambda$ to $\hat G$ from the splitting $\hat G=\overline G\times H$. Denote
by $\varphi(\lambda)$ the image of $\lambda$ in $\hat G/G =\Cc^*$. Then we define
$$
\overline \lambda(y_iy_j) = \lambda_i^{-1}\lambda_j^{-1}\varphi(\lambda) y_iy_j
$$
where $\lambda_i$ is the coordinate of $\lambda$ that corresponds to $s_i$.
This definition is ensures that
$(\sum_i z_i y_i )^2+C({\bf z})$ is semi-invariant with respect to $\overline G$
with character $\varphi$.
\end{remark}

\begin{remark}
One could in principle define the action of $\overline G$ on the sheaf of Clifford algebras
without a choice of the splitting $\hat G=\overline G \times H$. For any lift
$\lambda\in \hat G$ of $\overline \lambda\in \overline G$ the formula
$$
\overline \lambda(y_iy_j) = \lambda_i^{-1}\lambda_j^{-1}\varphi(\lambda) y_iy_j
$$
gives the same result. Indeed, for an element $\lambda \in H=\Cc^*$ that corresponds to the complex number $t$,
we have $\varphi(\lambda)=t^2$ and $\lambda_i=\lambda_j=t$, so
the right hand side is $1$. However, we find it convenient to pick a
splitting.
\end{remark}

\medskip
We will now describe some simple examples of our construction. In particular, we make a connection
to the  Kuznetsov's example that we studied in Section \ref{K.recall} to show how it fits
into the above toric formalism.
For the sake of simplicity, we will start with
the classical example of $(2,2,2,2)$-complete intersections in
$\Cc\Pp^7$ which has been  thoroughly
studied by multiple authors (see  \cite{Add09, CDHPS10}). It corresponds to the
$k=4$ case of Section \ref{K.recall}.

\medskip
Afterwards, we will consider a free $\Zz_2$
quotient of the above example which is a family of Calabi-Yau threefolds
with Hodge numbers $(1,33)$.
Its Clifford double mirror could be obtained completely analogously but appears to
be new.

\subsection{Example: $(2, 2, 2, 2)$-complete intersections in $\Cc\Pp^7$.}\label{2222}

We start with the lattice $N_1\cong \Zz^7$ of the fan of $\Cc\Pp^7$. We try to keep the construction
as natural as possible, in particular, we try to keep it symmetric with respect to the permutations
of coordinates of $\Cc\Pp^7$. Thus we view  this lattice as the quotient of the lattice
$\bigoplus_{i=1}^8 \Zz e_i$ by the sublattice
$\Zz (\sum_{i=1}^8 e_i)$. The fan of $\Cc\Pp^7$ has maximum cones that are generated by subsets of $7$ out of $8$
elements of $\{e_i\}$. The rays of the fan are generated  by $e_i$ and correspond to coordinate hyperplanes $D_i\subset \Cc\Pp^7$.

\medskip
The dual lattice $M_1$ is naturally identified with the rank $7$ sublattice of $\bigoplus \Zz e_i^\vee$ with the property that the sum
of the entries is $0$. Here we use $\{e_i^\vee\}$ to denote the dual basis of $\{e_i\}$.

\medskip
To consider the complete intersection of four general quadrics in $\Cc\Pp^7$, we, as usual, subdivide the standard anitcanonical divisor
of $\Cc\Pp^7$ as
\[
(D_1+D_2) + (D_3+D_4) + (D_5+D_6) + (D_7+D_8) = -K_{\Cc\Pp^7}.
\]
We introduce the extended lattices $M=M_1\oplus \Zz^4$ and
$N=N_1\oplus \Zz^4$  and consider
the reflexive Gorenstein cone $K^\vee$ in $N$ generated by the
lattice elements
\[\begin{split}
&s_1=(e_1;1,0,0,0),s_2=(e_2;1,0,0,0),t_1=({\bf 0};1,0,0,0),\\
&s_3=(e_3;0,1,0,0),s_4=(e_4;0,1,0,0),t_2=({\bf 0};0,1,0,0),\\
&s_5=(e_5;0,0,1,0),s_6=(e_6;0,0,1,0),t_3=({\bf 0};0,0,1,0),\\
&s_7=(e_7;0,0,0,1),s_8=(e_8;0,0,0,1),t_4=({\bf 0};0,0,0,1).
\end{split}
\]
The above $s_i$ and $t_j$ form the set of degree one elements $K^\vee_{(1)}$.

\medskip
The dual cone $K$ in $M$ is generated by $32$ elements
\[\begin{split}
&(-e_1^\vee-e_2^\vee+2e_i^\vee;1,0,0,0),(-e_3^\vee-e_4^\vee+2e_i^\vee;0,1,0,0),\\
&(-e_5^\vee-e_6^\vee+2e_i^\vee;0,0,1,0),(-e_7^\vee-e_8^\vee+2e_i^\vee;0,0,0,1)
\end{split}
\]
for all $i=1,\ldots,8$. The degree $1$ lattice elements that form $K_{(1)}$ are
given by
\[\begin{split}
&(-e_1^\vee-e_2^\vee+e_i^\vee+e_j^\vee;1,0,0,0),(-e_3^\vee-e_4^\vee+e_i^\vee+e_j^\vee
;0,1,0,0),\\
&(-e_5^\vee-e_6^\vee+e_i^\vee+e_j^\vee;0,0,1,0),(-e_7^\vee-e_8^\vee+e_i^\vee+e_j^\vee;
0,0,0,1)
\end{split}
\]
for all $36$ unordered pairs $(i,j)$. The corresponding coefficient function
$$
c:K_{(1)}\to \Cc
$$
encodes the coefficients of $4$ quadrics at the standard monomials $x_ix_j$.

\medskip
In what follows, we will adapt a somewhat different way of looking at $K$ and $K^\vee$.
We can think of the lattice $N$ as the rank $11$ quotient of $\Zz^{12}$ by the sublattice $\Zz(\sum_{i=1}^8s_i-2\sum_{j=1}^4 t_j)$.
The cone $K^\vee$ is then just the image of  the positive orthant $\Zz_{\geq 0}^{12}$. The dual lattice $M$ can be viewed as a corank one
sublattice of $\bigoplus_{i=1}^8\Zz s_i^\vee \oplus \bigoplus_{j=1}^4 \Zz t_j^\vee$, namely
\[
M=\{\sum_{i=1}^8 a_i s_i^\vee + \sum_{j=1}^4 b_j t_j^\vee \mid \sum_ia_i=2\sum_jb_j\}.
\]
The cone $K$ is then the intersection of $M$ with the positive orthant.

\medskip
The degree element $\deg^\vee$ in $K^\vee$ is given by
\begin{equation}\label{equation: two ways}
\deg^\vee =\frac 12 \sum_{i=1}^8 s_i = \sum_{j=1}^4 t_j.
\end{equation}
The degree element $\deg\in K$ is $\sum_{i=1}^8 s_i^\vee+  \sum_{j=1}^{4}t_j^\vee$.
The elements of $K^\vee_{(1)}$ are the aforementioned $s_i$ and $t_j$. The elements of $K_{(1)}$
are of the form
$s_i^\vee+s_j^\vee+t_k^\vee$ where $i$ may or may not equal $j$. When $i=j$ we get the above
$32$ generators of the rays of $K$.

\medskip
The equation \eqref{equation: two ways} is a prototypical situation
where for two different large K\"ahler limits of the $N=(2,2)$
theories one gets the description of the triangulated category in
two ways. On the one hand, the decomposition
$$
\deg^\vee = \sum_{j=1}^4 t_j
$$
allows one to see this category as the derived category of the  complete intersection of four quadrics in $\Cc\Pp^7$. On the other hand, the decomposition
$$
\deg^\vee =\frac 12 \sum_{i=1}^8 s_i
$$
leads to its description as the derived category of coherent sheaves for the Clifford algebra
over $\Cc\Pp^3$.

\medskip
For the first construction, we consider the regular simplicial fan on $K^\vee$ with eight maximum
cones obtained by removing one of the eight elements $s_i$. It is easy to see that the resulting
complete intersection is that of four quadrics in $\Cc\Pp^7$. The coefficient of the
$x_ix_j$ monomial of the  $k$-th quadric
is the coefficient $c(s_i^\vee+s_j^\vee+t_k^\vee)$.

\medskip
For the second construction, we use the fan $\Sigma$ in $K^\vee$ whose maximal
cones are given by eight elements $s_i$ and three out of four $t_i$.
We observe that the group $\hat G$ described in \eqref{hatg}
is given by $\Cc^*\times \Cc^*$ with the action
$$
(\lambda_1,\lambda_2)(t_j) = \lambda_1;~
(\lambda_1,\lambda_2)(s_i) = \lambda_2.
$$
The map to $\Cc^*$ whose kernel is $G$ is given by
$$
(\lambda_1,\lambda_2)\mapsto \lambda_1\lambda_2^2.
$$
The group $H$ is given by $\{(1,\lambda_2),\lambda_2\in \Cc^*\}\subset \hat G$,
and the group $\overline G=\hat G/H$ is naturally isomorphic to $\Cc^*$ with
the diagonal map to $(\Cc^*)^4$ in view of $(\lambda_1,-)(t_j)=\lambda_1$.
The lattice $\overline N$ is the quotient of $N$ by the lattice generated by
$s_i$ and $\deg^\vee$. Therefore, it can be naturally identified with the quotient of
$\oplus_{j=1}^4\Zz t_j$ by the span of $\sum_j t_j$. The fan $\overline\Sigma$
is the standard fan of $\Cc\Pp^3$.

\medskip
Then it is easy to see that the sheaf of Clifford algebras constructed in
Definition \ref{defineb0} is the same sheaf on $\Cc\Pp^3$ as the one constructed in Section \ref{K.recall}.

\begin{remark}\label{octic}
The Clifford noncommutative variety $(\Bb,\Ss_0)$ should be viewed as a crepant resolution
of the singular double cover of $\Cc\Pp^3$ ramified over the determinantal octic which is
the determinant of the symmetric $8\times 8$ matrix of degree $1$ forms encoded by $c$.
See \cite{KuzICM} for details.
\end{remark}

\subsection{Free involution quotients of  $(2,2,2,2)$-complete intersections.}\label{2222tau}

In this example, we consider complete intersections of four quadrics in $\Cc\Pp^7$ which admit a free $\Zz_2$ action.
This is a particular case of more general construction of such complete intersections with more sophisticated free group
actions, see \cite{Bea98, Hua11}. While the more interesting non-abelian actions can not be easily realized in the toric setting
of this paper,
\footnote{One can try to extend to such actions by looking at automorphisms of the fan,
similar to  \cite{Stapledon}.} %arXiv:1011.5006
the simple case of an involution fits nicely into our construction.

\medskip
An involution $\tau$ on a smooth complete intersection  $X$ of four quadrics in $\Cc\Pp^7$ always comes from a linear action on $\Cc^8$. In view of holomorphic Lefschetz formula (or by direct inspection of fixed point sets), if this involution is fixed point free on $X$, then it must act with trace $0$ on $H^0(X,\Oo(1))$. Without loss of generality we may assume that
this involution $\tau$ acts by
\[
\tau (x_1:x_2:\ldots:x_8) = (-x_1:-x_2:-x_3:-x_4:x_5:x_6:x_7:x_8).
\]
Then the action of $\tau$ on the space of degree two polynomials in $x_i$ has $1$ and $(-1)$-eigenspaces of dimension $20$ and $16$ respectively.
Holomorphic Lefschetz formula then forces us to consider $4$ $\tau$-invariant quadrics in $\Cc\Pp^7$, so that the action of $\tau$ on
$H^0(X,\Oo(2))$ has eigenspaces of dimension $16$ each, see \cite{Hua11}.

\medskip
In terms of toric geometry, the situation is extremely similar to the setting of the previous example, except that the lattice $\Zz^8/\Zz\sum_{i=1}^8e_i$
is now replaced by a suplattice of index two to include $\frac 12 (e_1+e_2+e_3+e_4)$. On the dual side, we must consider the corresponding sublattice of
index two. When extended to $N$, we now have
\[
N=\Big((\bigoplus_{i=1}^8 \Zz s_i + \frac 12 \Zz (s_1+s_2+s_3+s_4))\oplus \bigoplus_{j=1}^4\Zz t_j\Big)/ \Zz(\sum_{i=1}^8s_i-2\sum_{j=1}^4t_j)
\]
with the cone $K^\vee$ still given as the image of the nonnegative orthant.
The dual lattice $M$ is now given by
\[
M=\{\sum_{i=1}^8 a_i s_i^\vee + \sum_{j=1}^4 b_j t_j^\vee \mid \sum_{i=1}^8a_i=2\sum_{j=1}^4b_j~{\rm and~} \sum_{i=1}^4a_i {\rm ~is~even}\}.
\]
The cone $K$ is the intersection of $M$ with the positive orthant.

\medskip
The degree elements $\deg^\vee$ and $\deg$ are given by the same formulas as before. The set $K^\vee_{(1)}$ is unchanged, however the set
$K_{(1)}$ is now smaller. It consists of elements of the form
\[
s^\vee_i+s^\vee_j+t^\vee_k
\]
where $i$ and $j$ are either both in the range of $1,\ldots,4$ or are both in the range $5,\ldots,8$. Observe that this is consistent with the choice of quadrics  from the invariant eigenspace of $\tau$.

\medskip
Again, we now look at the groups $\hat G$, $ G$ and $H$.
The group $\hat G$ is now equal to $\Cc^*\times \Cc^*\times \{\pm 1\}$
which is written in coordinates as
$$
(\lambda_1,\lambda_2,\pm 1) (t_j) = \lambda_1,~
(\lambda_1,\lambda_2,\pm 1) (s_i) =\lambda_2, ~{\rm if~}1\leq i\leq 4,
$$
$$
(\lambda_1,\lambda_2,\pm 1) (s_i) =\pm\lambda_2, ~{\rm if~}5\leq i\leq 8.
$$
The map $\hat G\to \Cc^*$ still sends
$$
(\lambda_1,\lambda_2,\pm 1) \mapsto \lambda_1\lambda_2^2.
$$
The subgroup $H$ of $\hat G$ that scales the variables that correspond
to $s_i$  is $(1,\Cc^*,1)$. The quotient group $\overline G=\hat G/H$
is naturally identified with $\Cc^*\times \{\pm 1\}$.

\begin{remark}\label{remark: non saturatedness}
In this example, the lattice $\Zz s_1+\Zz s_2+\ldots +\Zz s_8$ is not saturated in $M$, even after adding $\deg^\vee  = \frac 12 \sum_{i=1}^8 s_i$.
Indeed, there is also an element $ \frac 12  (s_1+s_2+s_3+s_4)$ in the real span of $s_i$, which is not an integer linear combination of $s_i$ and $\deg^\vee$.
\end{remark}

\medskip
Let us now discuss the noncommutative variety $(\Ss,\Bb_0)$ in view of the above remark.
The images of $t_1,\ldots,t_4$ still add up to $0$, so the base $\Ss$
is a $\Zz_2$ gerbe over $\Cc\Pp^3$ given by the quotient $[\Cc\Pp^3/\Zz_2]$ by the trivial
$\Zz_2$ action. Alternatively, it is a quotient of $\Cc^4-\{{\bf 0}\}$ by $\Cc^*\times \{\pm 1\}$
where the first factor acts in the usual way and the second factor acts trivially.
The vector bundle used to construct the Clifford algebra is now a bit different. Namely,
it is a direct sum of two rank four bundles on which the extra involution acts as $(-1)$ and $1$
respectively.

\medskip
It is interesting to investigate this case further along the lines of  Remark \ref{octic}. On the
coarse moduli space $\Cc\Pp^3$ of $\Ss$, the symmetric determinantal octic is now a union of
two symmetric determinantal quartics $Q_+$ and $Q_-$, since the corresponding matrix consists of two $4\times 4$ blocks.

\medskip
The fact that we consider a gerbe $[\Cc\Pp^3/\Zz_2]$ can be encoded by looking at the
semidirect product of the even Clifford algebra of Kuznetsov's original construction with
the group algebra $\Cc[h]/\la h^2-1\ra$ of $\Zz_2$. Let us investigate the center of this
algebra in more detail. Let us first localize over the generic point of $\Cc\Pp^3$, i.e.
we will work over the field $F$ of rational functions on $\Cc\Pp^3$. If we diagonalize the
quadratic forms on $V_+$ and $V_-$, then the semidirect product of the even Clifford algebra
with the group ring of $\Zz_2$ gives the even part (in $y$) of the quotient of
the free algebra
$$
F\{y_1^+,\ldots,y_4^+, y_1^-,\ldots,y_4^-,h\}
$$
by the two-sided ideal generated by the relations that $h$ commutes with $y_i^+$ and
anti-commutes with $y_i^-$, the relation $h^2-1$, and the usual Clifford relations
$$
(y_i^+)^2+c_i^+,(y_i^-)^2+c_i^-,
y_i^+y_j^- + y_j^-y_i^+
$$
for all $i$ and $j$ and
$$
y_i^+y_j^+ + y_j^+y_i^+,y_i^-y_j^- + y_j^-y_i^-
$$
for $i\neq j$.
Note that $c_i$ may not assumed to be $1$, since $F$ is not algebraically closed.

\medskip
There is  a grading by $\Zz_2^9$ that
looks at parity of monomials in $y_i^+$, $y_i^-$ and $h$.
In fact, the algebra is easily seen to be
of dimension $2^8$ over $F$ with the basis given by
\begin{equation}\label{twoeight}
h^l\prod_{i\in I}y_i^+ \prod_{j\in J}y_j^-
\end{equation}
for sets $I$, $J$ with $\vert I\vert  + \vert J\vert$ even and $l\in \{0,1\}$.

\medskip
The same grading descends to the center, so to calculate the center we
just need to determine which
monomials in \eqref{twoeight} are central. If one has $I$ which is neither empty nor the whole set
$\{1,\ldots,4\}$, then by taking a commutator with $y_i^+y_j^+$ with $i$ in $I$ and $j$ not in $I$
we get a factor of $(-1)$. The same  happens for $J$. So there are
$8$ monomials to consider as possible elements of the center.
$$
1,
h,
y_1^+\cdots y_4^+,
hy_1^+\cdots y_4^+ ,
y_1^-\cdots y_4^-,
hy_1^-\cdots y_4^- ,
$$
$$y_1^+\cdots y_4^+
y_1^-\cdots y_4^- ,
hy_1^+\cdots y_4^+
y_1^-\cdots y_4^- .
$$
Of these, commutator with $y_1^+y_1^-$ excludes all but
$$
1,h y_1^+\cdots y_4^+ ,hy_1^-\cdots y_4^- ,y_1^+\cdots y_4^+
y_1^-\cdots y_4^-
$$
which are indeed central.
It remains to observe that the relations of the algebra imply that
$$
(h y_1^+\cdots y_4^+ )^2 = c_{1,+}\cdots c_{4,+} = \det(C_+)
$$
and
$$
( hy_1^-\cdots y_4^- )^2 = c_{1,-}\cdots c_{4,+} = \det(C_-)
$$
so at least generically the center of the algebra is the $(\Zz_2)^2$ Galois cover of
$\Cc\Pp^3$ obtained by attaching the square roots of the two quartics $Q_+$ and
$Q_-$.

\medskip
This motivates the following more precise conjecture.
\begin{conjecture}
The Clifford double mirror of Definition \ref{defineb0}
is a crepant categorical resolution of the $\Zz_2\times \Zz_2$ Galois cover
of $\Cc\Pp^3$ ramified at the two quartics $\Qq_+$ and $\Qq_-$.
\end{conjecture}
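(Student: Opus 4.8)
The plan is to prove the conjecture in three stages, mirroring the structure of Kuznetsov's one-block argument. \emph{Reduction.} First I would make precise the reduction sketched in the text: since $\Ss=[\Cc\Pp^3/\Zz_2]$ is a trivial $\Zz_2$-gerbe and the quadratic function $C$ is block diagonal, $C=C_++C_-$ with $C_\pm$ the two symmetric $4\times 4$ matrices of linear forms on $\Cc\Pp^3$, the category of coherent sheaves on $(\Ss,\Bb_0)$ is equivalent to the category of modules over the sheaf of algebras $\Bb_0\rtimes\Cc[\Zz_2]$ on $\Cc\Pp^3$ --- the semidirect product with $\Cc[h]/\la h^2-1\ra$ that encodes the gerbe structure, built from the two rank-$4$ bundles $V_\pm\cong\Oo^4$ carrying $C_\pm$, with $h$ commuting with $V_+$ and anticommuting with $V_-$. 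Along the way one verifies that the centrality assumption holds for the chosen fan $\Sigma$ and that the flatness assumption of Remark \ref{flatornot} holds for generic $c$ by a Bertini argument, so that Definition \ref{defineb0} indeed applies.

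\emph{The center is $W$.} Second, I would compute the center $\mathcal{Z}$ of this sheaf of algebras globally, upgrading the computation over the generic point carried out in the text. As a sheaf of $\Oo_{\Cc\Pp^3}$-modules $\mathcal{Z}$ is freely generated by $1$, $h\,y_1^+\cdots y_4^+$, $h\,y_1^-\cdots y_4^-$, and $y_1^+\cdots y_4^+y_1^-\cdots y_4^-$, subject to $(h\,y_1^+\cdots y_4^+)^2=\det C_+=:Q_+$ and $(h\,y_1^-\cdots y_4^-)^2=\det C_-=:Q_-$, with the fourth generator the product of the first two up to sign; hence ${\rm Spec}_{\Cc\Pp^3}\mathcal{Z}$ is precisely the (in general singular) $\Zz_2\times\Zz_2$ Galois cover $\pi:W\to\Cc\Pp^3$ ramified over $Q_+\cup Q_-$, i.e. $\mathcal{Z}=\pi_*\Oo_W$. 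Here Kuznetsov's identification \cite{Kuz08,KuzICM} of the even Clifford algebra of an even-rank quadric bundle with a sheaf of algebras over the determinantal double cover is the one-block input, applied to $V_+$ and to $V_-$ separately and glued by the $\Zz_2$ generated by $h$; one must also check that $\mathcal{Z}$ acquires no extra components or nilpotents over the degeneration loci. The structure of $\Bb_0\rtimes\Cc[\Zz_2]$ as a sheaf of $\pi_*\Oo_W$-algebras then supplies the candidate adjoint pair $\pi^*:D^b(W)\to D^b(\Ss,\Bb_0)$ and $\pi_*$ in the other direction.

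\emph{Equivalence and crepancy.} Third, I would show that $(\pi^*,\pi_*)$ exhibit $D^b(\Ss,\Bb_0)$ as a crepant categorical resolution of $D^b(W)$. Over the open set $W^\circ\subseteq W$ lying above the locus where both $C_+$ and $C_-$ have corank $\le 1$ and $Q_+$, $Q_-$ meet transversally, $\Bb_0\rtimes\Cc[\Zz_2]$ is a (possibly $\mu_2$-twisted) Azumaya algebra over $\Oo_{W^\circ}$, so restricted there the construction is a twisted derived equivalence with $D^b(W^\circ,\beta)$ of the Kuznetsov type, and in particular $\pi_*\pi^*\simeq{\rm id}$ on perfect complexes supported over $W^\circ$; part of the plan is to identify the Brauer class $\beta$ and decide whether it is trivial --- if not, the conjecture should be refined to name it. The remaining work is the local analysis of $\Bb_0\rtimes\Cc[\Zz_2]$ along the degeneration strata, which yields the categorical resolution and, as in the one-block case, its crepancy (cf. \cite{KuzICM}); this is also consistent globally with the fact that, by the main Theorem \ref{dmthm} (whose hypotheses hold here) together with Theorem \ref{isikthm} applied to the decomposition $\deg^\vee=\sum_{j=1}^4 t_j$, the category $D^b(\Ss,\Bb_0)$ is the derived category of the crepant resolution of the free $\Zz_2$-quotient of the $(2,2,2,2)$ complete intersection, hence a Calabi-Yau category of dimension $3$, while $\omega_W\cong\Oo_W$.

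\emph{Main obstacle.} The hard part is the degeneration analysis along $Q_+\cap Q_-$: there $W$ has non-isolated, product-type singularities that do not arise for a single determinantal double cover, so Kuznetsov's quadratic-degeneration and Lefschetz-decomposition techniques have to be run in a genuinely two-parameter local model before one can conclude that the resolution functors extend across that locus and that $\pi_*\pi^*\simeq{\rm id}$ on all perfect complexes. A cleaner route that might bypass this is to realize $W$ itself --- or a crepant stack resolution of it --- as coming from a decomposition of $\deg^\vee$ of the same cone $K^\vee$ of the intermediate type $0<r<k$ of Section \ref{general}: intuitively, keeping $s_1,\ldots,s_4$ and $s_5,\ldots,s_8$ in separate, un-halved groups should produce the two quartic branch loci, so that the conjecture becomes a special case of the derived equivalence conjectured there. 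The difficulty then migrates to exhibiting the correct decomposition and fan, which is exactly the question the paper leaves open.
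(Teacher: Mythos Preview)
The paper does not prove this statement: it is explicitly a \emph{conjecture}, and the only argument the paper offers in its support is the stringy Euler characteristic computation immediately following it, showing that $\chi_{\rm st}$ of the $\Zz_2\times\Zz_2$ Galois cover of $\Cc\Pp^3$ ramified at $Q_+\cup Q_-$ equals $-64$, matching the Euler characteristic of the free $\Zz_2$-quotient of the $(2,2,2,2)$ complete intersection. There is no proof in the paper for you to compare against.

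Your proposal therefore goes far beyond what the paper attempts: you are sketching a strategy to settle an open problem. The first two stages are reasonable and largely extend what the paper already does at the generic point (the center computation). Your own assessment of the third stage is accurate: the degeneration analysis along the locus where both $C_+$ and $C_-$ drop rank, and especially along $Q_+\cap Q_-$, is the genuine content, and nothing you have written constitutes a proof there. So what you have is a research plan, not a proof, and you correctly flag the gap.

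One concrete issue with your proposed alternative route: the suggestion to keep $s_1,\ldots,s_4$ and $s_5,\ldots,s_8$ in ``separate, un-halved groups'' does not fit the framework of Section~\ref{general}. That section allows only a single block of $s_i$ with coefficient $\frac12$ together with $t_j$ with coefficient $1$; there is no mechanism for two independent half-blocks, and promoting one block of four $s_i$ to $t_j$'s is impossible since $\frac12\sum_{i=1}^4 s_i$ alone already pairs to $2$ with $\deg$, leaving no room for four $t_j$'s of total degree $4$. So if this idea is to work, it would require a genuine extension of the paper's combinatorial setup rather than an application of it.
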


In support of this conjecture, let us calculate the stringy Euler numbers of this
Galois cover. Each quartic $Q_\pm$  has $10$ nodes and Euler characteristics
$14$. They intersect in a smooth curve $Y$ with $2g-2=64$, so $\chi(Y)=-64$.
We have the following Euler characteristics
$$
\chi(Y)=-64, \chi(Q_\pm-Y) = 78, \chi(\Cc\Pp^3-(Q_+\cup Q_-))=-88.
$$
On the Galois cover, the first kind of points gives preimage of $1$ point,
the second  gives $2$ points, and the last gives $4$ points.
Therefore, we get the usual Euler characteristics of the Galois cover
$$
(-64)+2(78+78) + 4(-88) = -104.
$$
The singularities on the Galois cover are the preimages of the $10$ nodes
of $Q_+$ and $10$ nodes of $Q_-$, which gives $40$ three-dimensional ODPs.
Each of them contributes an extra $1$ to stringy Euler characteristics since their
crepant resolution locally involves replacing a point with a $2$-sphere.
So we get the stringy Euler characteristics of $-104+40=-64$. As expected, this
matches the Euler characteristics of the quotient of the complete intersection
of type $(2,2,2,2)$ in $\Cc\Pp^7$ by a free involution.

\begin{remark}
We find it fascinating that the free quotient on the complete intersection side
leads to a double cover on the Clifford double mirror side by means of enlarging the center
of the corresponding sheaf of algebras. This phenomenon depends crucially on the parity
of the number of quadrics.
We will see later in Section \ref{222tau} that in the case of Enriques surfaces realized
as quotients of $(2,2,2)$ complete intersections in $\Cc\Pp^5$ by a free involution,
 the center will be generically just the field of functions on $\Cc\Pp^2$.
\end{remark}

\section{Derived categories of Clifford double mirrors.}\label{sectionderived}
The goal of this section is to provide a proof that the derived
category of the sheaf of Clifford algebras over a toric DM stack
constructed in the previous section is equivalent to the $\hat
G$-equivariant derived category of matrix factorizations
considered in Section \ref{philosophy}. This immediately implies
that in the case when different large K\"ahler limits give complete
intersection and a sheaf of Clifford algebras, there is an
equivalence of the corresponding derived categories. This is our
main result Theorem \ref{dmthm}.

\medskip
Recall that we have a
reflexive Gorenstein cone $K \subset {M}_\Rr$ with the dual
cone $K^\vee \subset{N}_\Rr$. Suppose there exists a decomposition
\[
\deg^\vee = \frac{1}{2}(s_1 + \cdots + s_{2k}),
\quad s_j \in K^\vee_{(1)},
\] and a regular triangulation $\Sigma$ of $K_{(1)}^\vee$ such that
every maximum dimensional cone of $\Sigma$ contains all $\{s_j\}$.
For a fixed generic coefficient function
\[
c:K_{(1)}\to\Cc
\]
we define a noncommutative variety/stack $  (\Ss, \Bb_0)$, as in Section \ref{defcliff}.
Recall that $\Ss=[U_{\overline \Sigma}/{\overline G}]$ is the toric DM stack associated to the action of the group
\[
\overline G = \hat G/H
\]
on an open set $U_\Sigma$ of $\Cc^{K^\vee_{(1)}-\{s_1,\ldots,s_{2k}\}}$.

\medskip

Let $D(\Ss, \Bb_0)$ be the bounded derived
category of coherent sheaves on $\Ss$ which are also $\Bb_0$-modules.
%In what follows, we omit the superscript $b$ in the bounded derived category $D^b(-)$ in order to unburden the notations.
Let us also consider the derived category
$$
D_B(K,c;\Sigma)
$$
which is the category of factorizations of the potential $C$ on
$[U_\Sigma/G]$ defined by $c$. It is obtained from the category of
pairs of $\hat G$-equivariant sheaves $\Ff_1,\Ff_2$ on $U_\Sigma$,
with the maps $f:\Ff_1\to\Ff_2,g:\Ff_2\to \Ff_1\otimes \Ll$ whose
composition is multiplication by $C$.

\medskip
The main result of this section is the following.
\begin{theorem}\label{theorem: derived equivalence}
Under the flatness assumption of Remark \ref{flatornot}
there exists a derived equivalence
\[D(\Ss, \Bb_0)\cong D_B(K,c;\Sigma).
\]
\end{theorem}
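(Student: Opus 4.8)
The plan is to recognize $D_B(K,c;\Sigma)$ as the derived category of matrix factorizations attached to a \emph{quadric fibration} in the sense of \cite[Section~3]{Kuz08}, and then to run the ``residual component'' argument of Section \ref{K.recall} relative to the toric base $\Ss$ in place of $\Cc\Pp^{k-1}$. Concretely, I would first use the splitting $\hat G=\overline G\times H$ of Remark \ref{splitHG} and the product $U_\Sigma=U_{\overline\Sigma}\times\Cc^{2k}$ provided by the centrality assumption \eqref{central2}. Since $\la m,\deg^\vee\ra=1$ for every $m\in K_{(1)}$, each monomial of $C$ has total degree exactly $2$ in the fibre coordinates $z(s_1),\dots,z(s_{2k})$ and there are no lower order terms; hence $C$ is a genuine, fibrewise homogeneous quadratic form on the rank $2k$ bundle, $H$-semi-invariant of weight $2$ and $\overline G$-semi-invariant with the character $\varphi$ of Remark \ref{explicitdefcliff}. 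In other words $C$ is exactly the data $\sigma\colon\Ll\to Sym^2 E^\vee$ of a $\overline G$-equivariant quadric fibration over $U_{\overline\Sigma}$, where $E$ is the $\Cc^{2k}$-factor with its $\overline G$-linearization, $\Ll$ is the line bundle associated to the character $\varphi$, and — crucially — the flatness assumption of Remark \ref{flatornot} is precisely Kuznetsov's hypothesis that $\sigma$ is an embedding of vector bundles (equivalently that $C$ does not vanish identically on any fibre). After descent along $U_{\overline\Sigma}\to\Ss$ this is a quadric fibration over the smooth DM stack $\Ss$, with total space $\operatorname{Tot}_{\Ss}(E)$ satisfying $[\operatorname{Tot}_{\Ss}(E)/\Cc^{*}_{\mathrm{scl}}]=[U_\Sigma/\hat G]$, so that $D_B(K,c;\Sigma)=D^{b}\big(coh[\operatorname{Tot}_{\Ss}(E)/\Cc^{*}_{\mathrm{scl}}],C\big)$.

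Next I would run the two-theorem comparison of Section \ref{K.recall}, relative to $\Ss$. Passing to the projective quadric fibration $\pi\colon\Pp_{\Ss}(E)\to\Ss$ and its zero locus $\Xx\subset\Pp_{\Ss}(E)$, the relative form of Orlov's theorem \cite{Orl09} in the equivariant setting of \cite{BFK12} — the relevant Gorenstein parameter being $2k-2$ by the adjunction formula, exactly as in Section \ref{K.recall} — gives a semiorthogonal decomposition
\[
D^{b}(\Xx)=\la \pi^{*}D^{b}(\Ss)\otimes\Oo_{\Xx/\Ss}(1),\dots, \pi^{*}D^{b}(\Ss)\otimes\Oo_{\Xx/\Ss}(2k-2), D_B(K,c;\Sigma)\ra .
\]
On the other hand, Kuznetsov's quadric fibration theorem \cite[Section~3, Theorem~4.2]{Kuz08} gives
\[
D^{b}(\Xx)=\la D(\Ss,\Bb_0), \pi^{*}D^{b}(\Ss)\otimes\Oo_{\Xx/\Ss}(1),\dots, \pi^{*}D^{b}(\Ss)\otimes\Oo_{\Xx/\Ss}(2k-2)\ra ,
\]
where $\Bb_0$ is the sheaf of even Clifford algebras of $(E,\Ll,\sigma)$, which is literally the sheaf of Definition \ref{defineb0} and Remark \ref{explicitdefcliff}. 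Comparing the two decompositions exactly as in Section \ref{K.recall} — mutate the Clifford component to the far end of the second decomposition, so that the exceptional pieces built from the $\Oo_{\Xx/\Ss}(i)$ and pullbacks from $\Ss$ match — one obtains $D_B(K,c;\Sigma)\cong D(\Ss,\Bb_0)$. All the functors involved are natural in the base, so one may carry out this comparison with the group $\overline G$ acting throughout (equivalently, $H$-equivariantly over $U_{\overline\Sigma}$ and then passing to $\overline G$-equivariant objects), which is exactly the definition of $D(\Ss,\Bb_0)$.

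The main obstacle is the bookkeeping needed to run \cite{BFK12}, \cite{Kuz08} and \cite{Orl09} over a base that is a smooth \emph{Deligne-Mumford stack}, quasi-projective over an affine, rather than a smooth projective variety: one must verify that the relative Orlov theorem and Kuznetsov's quadric fibration decomposition remain valid in this generality, which they do since $\Ss$ is a smooth toric DM stack, proper over an affine, and the cited results are formulated for sufficiently general $\Cc^{*}$-quotients. A second point of care is that $\Bb_0$ is in general only an Azumaya-type algebra twisted by a Brauer class (as already in the gerby example of Section \ref{2222tau}), so ``residual component $=D(\Ss,\Bb_0)$'' must be read as the derived category of $\Bb_0$-modules, not as sheaves on a cover. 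The flatness assumption of Remark \ref{flatornot} enters precisely here: it is what makes $\Xx\to\Ss$ a \emph{flat} quadric fibration, so that Kuznetsov's theory applies and the degeneration locus of $C$ contributes nothing beyond making $\Bb_0$ nontrivial; when flatness fails the argument genuinely breaks down.
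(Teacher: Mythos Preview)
Your proposal is correct and follows essentially the same route as the paper: both arguments pass through the intermediate category $D^b(\Xx)$ of the projective quadric fibration $\Xx\to\Ss$, apply Kuznetsov's theorem \cite[Theorem~4.2]{Kuz08} on one side and a relative/equivariant version of Orlov's theorem on the other, and then identify the two residual pieces as left and right orthogonals to the same admissible subcategory $\la\pi^*D^b(\Ss)\otimes\Oo_{\Xx/\Ss}(1),\dots,\pi^*D^b(\Ss)\otimes\Oo_{\Xx/\Ss}(2k-2)\ra$. The only substantive difference is bibliographic: for the relative Orlov step the paper invokes \cite[Proposition~3.3]{BDFIK14} (itself based on \cite[Theorem~3.5.2]{BFK12}), which already packages the equivariant statement over a stacky base, whereas you appeal directly to \cite{Orl09} and \cite{BFK12} and note that the stack/equivariant generalization goes through.
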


\begin{proof}
The argument uses the intermediate category $D^b(\Xx)$, which contains both of the triangulated
categories in question as left and right orthogonal complements to an admissible subcategory.
We will use the results of \cite{Kuz08,BDFIK14}.

\medskip
In order to construct $\Xx$ and $D^b(\Xx)$,
recall that we have
$$
U_\Sigma = U_{\overline \Sigma}\times \Cc^{2k}.
$$
There is a group $\hat G$ acting on $U_\Sigma$. There is a subgroup
$H$ of $\hat G$ which acts by scaling the coordinates of $\Cc^{2k}$.
Moreover, the inclusion $H\subseteq \hat G$ 
splits by Remark \ref{splitHG}. Thus we will now consider
$$
\hat G = H\times {\overline G}
$$
where $\overline G = \hat G/H$ and have $\overline G$ act
 $U_\Sigma$ as well.

\medskip
Consider
 $\Cc\Pp^{2k-1}$ bundle $U_{\overline\Sigma}\times \Cc\Pp^{2k-1}$ over $U_{\overline \Sigma}$ given by
$$
U_{\overline\Sigma} \times ( \Cc^{2k}-\{{\bf 0}\})/H.
$$
The coefficient function $c:K_{(1)}\to \Cc$ gives rise to a quadric
fibration over $U_{\overline \Sigma}$ in the sense of \cite[Section
3]{Kuz08}. Specifically, the polynomial
$$
C({\bf z}) = \sum_{m\in K_{(1)}} c(m) \prod_{n\in K^\vee_{(1)}} z(n)^{\la m,n\ra}
$$
has total degree $2$ in the variables $z(s_1),\ldots,z(s_{2k})$, because
$$
\sum_{n=s_i,i=1,\ldots,2k} \la m, n \ra = \la m, \sum_{i=1}^{2k} s_i \ra = \la m, 2\deg^\vee\ra =2.
$$
While our situation is slightly more general, since we are interested in working equivariantly
with respect to the group ${\overline G}$ (or alternatively work over a DM stack base rather than
a scheme base), we can still use the framework of Kuznetsov. However, we do need to assume
that the fibration is flat, see Remark \ref{flatornot} and Section \ref{noflatness}.

\medskip
We denote by $\Xx$ the DM quotient substack $[\{C=0\}/H\times \overline
G]$ of the DM stack $[U_{\overline \Sigma} \times
(\Cc^{2k}-\{\mathbf{0}\})/H \times \overline G]$. Note that the
action of $\overline G$ descends to the action on
$U_{\overline\Sigma}\times \Cc\Pp^{2k-1}$ and is compatible with the
action on the base $U_{\overline\Sigma}$. Thus we have a quadric
fibration
$$
\pi:\Xx\to \Ss.
$$
Our definition of the sheaf of even Clifford algebras
$(\Ss,\Bb_0)$ matches that of \cite[Section 3]{Kuz08}.
We now wish to apply a slight generalization of \cite[Theorem 4.2]{Kuz08} to the equivariant
setting which states that $D^b(\Xx)$ admits a semiorthogonal decomposition
\begin{equation}\label{XtoS}
D^b(\Xx) = \big\la  D^b(\Ss,\Bb_0), \pi^*D^b(\Ss)\otimes O_{\Xx/\Ss}(1),
\ldots,
\pi^*D^b(\Ss)\otimes O_{\Xx/\Ss}(2k-2)
 \big\ra
\end{equation}
in the sense that a left orthogonal to the category generated by
$\pi^*D^b(\Ss)\otimes O_{\Xx/\Ss}(i), i=1,\ldots,{2k-2}$
 is equivalent to $D^b(\Ss,\Bb_0)$. Kuznetsov's
arguments apply in our slightly more general situation since his
construction is functorial.

\medskip
To compare $D^b(\Xx)$ with the category
$D_B(K,c;\Sigma)$ we need to consider a relative and equivariant version of the main
theorem of Orlov \cite[Theorem 16]{Orl09}. A relative (but not equivariant version) of this theorem
is proved in \cite{BDFIK14}, and their proof (based on \cite[Theorem 3.5.2]{BFK12}) works in the equivariant setting.
We thank Matt Ballard for providing this reference.

\medskip
The generalization of Orlov's theorem in question is provided by \cite[Proposition 3.3]{BDFIK14}
which states that
\begin{equation}\label{theotherside}
D^b(\Xx) = \big\la \pi^*D^b(\Ss)\otimes O_{\Xx/\Ss}(1),
\ldots,
\pi^*D^b(\Ss)\otimes O_{\Xx/\Ss}(2k-2),\Phi_+ \mathcal C
\big\ra
\end{equation}
where $\Phi_+$ is a certain fully-faithful functor and the category $\mathcal C$ is given
in \cite{BDFIK14} as $D^b(coh[Q_-/(\Cc^*)^2],w)$. In the equivariant setting, this would be
$$
D^b(coh[U_{\Sigma}\times \Cc^*/\hat G\times \Cc^*],uC({\bf z}))
$$
where $u$ is the coordinate on the extra $\Cc^*$. It is then easily seen to be equivalent to
$$
D^b(coh[U_{\Sigma}/\hat G],C({\bf z}) )= D_B(K,c;\Sigma).
$$
Therefore, \eqref{theotherside} and \eqref{XtoS}, together with the standard statement that left and right orthogonal complements
of an admissible subcategory are equivalent,  finish the argument.
\end{proof}

\begin{remark}
It would be much more pleasant to have a direct construction of equivalence between the categories in question. It was suggested to us by
Nick Addington that one should be able to go from a module $M$ over the Clifford algebra $\Bb_0$ to a matrix factorization by
looking at $M\to M\otimes_{\Bb_0}\Bb_1\to M$, where $\Bb_1$ is the odd part of the Clifford algebra. However, we do not have a proof
that this induces an equivalence of triangulated categories. We also do not know
whether this equivalence would coincide with the one  given by  the admittedly roundabout arguments above.
\end{remark}

As the consequence of Theorem \ref{theorem: derived equivalence} we get
the derived equivalence of double mirrors.
\begin{theorem}\label{dmthm}
Suppose that a complete intersection $\Xx$ and a Clifford
noncommutative variety $\Yy$ are given by different decompositions of the
degree element $\deg^\vee$ of a reflexive Gorenstein cone $K^\vee$
and the appropriate regular simplicial fans in $K^\vee$. Then the
bounded derived categories of $\Xx$ and $\Yy$ are equivalent,
provided the centrality and the flatness assumptions on $\Yy$ hold.
\end{theorem}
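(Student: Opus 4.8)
The plan is to show that both $D^b(\Xx)$ and $D^b(\Yy)$ are identified, via results already established, with the single triangulated category $D_B(K,c)$ attached to the pair of cones $(K,K^\vee)$ and the coefficient function $c$ alone; the desired equivalence then follows by transitivity. Put differently, the decomposition of $\deg^\vee$ and the choice of regular simplicial fan are used only to manufacture a concrete geometric (respectively, noncommutative) incarnation of a category that intrinsically depends on $(K,K^\vee,c)$.

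Concretely, I would proceed as follows. First I would fix the data giving each side: let the complete intersection $\Xx$ be attached to a decomposition $\deg^\vee = t_1 + \cdots + t_k$ together with a regular simplicial fan $\Sigma_\Xx$ in $K^\vee$ satisfying the centrality condition \eqref{central} (such a fan exists by Proposition \ref{2.12}), and let the Clifford noncommutative variety $\Yy = (\Ss,\Bb_0)$ be attached to a decomposition $\deg^\vee = \frac{1}{2}(s_1 + \cdots + s_{2k})$ together with a regular simplicial fan $\Sigma_\Yy$ in $K^\vee$ satisfying the centrality condition \eqref{central2}, whose existence is part of the hypothesis. By Theorem \ref{isikthm}, $D^b(\Xx) = D_B(K,c;\Sigma_\Xx)$. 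By Theorem \ref{theorem: derived equivalence}, which uses the flatness assumption of Remark \ref{flatornot}, $D^b(\Yy) = D(\Ss,\Bb_0) \cong D_B(K,c;\Sigma_\Yy)$. Finally, $\Sigma_\Xx$ and $\Sigma_\Yy$ are both regular simplicial fans with support $K^\vee$ and rays based at points of $K^\vee_{(1)}$, so Theorem \ref{samecat} yields $D_B(K,c;\Sigma_\Xx) \cong D_B(K,c;\Sigma_\Yy)$. Composing the three equivalences gives $D^b(\Xx) \cong D^b(\Yy)$.

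Since the substance has already been discharged in Theorems \ref{samecat}, \ref{isikthm} and \ref{theorem: derived equivalence}, the only point requiring care is bookkeeping: one must verify that the symbol $D_B(K,c;\Sigma)$ occurring in each of those statements refers to literally the same object, namely the $\hat G$-equivariant derived category of matrix factorizations of the potential $C$ of Definition \ref{maincat}, so that the composite makes sense. I would therefore include a sentence making explicit that $D_B(K,c)$ is well defined up to equivalence and that the two large K\"ahler limits $\Xx$ and $\Yy$ are two incarnations of it. The genuine obstacle — and the reason the centrality and flatness hypotheses cannot be dropped — is upstream of this argument rather than in it: without centrality of $\Sigma_\Yy$ there may be no suitable fan at all, and the matrix-factorization description must be replaced by phenomena of exoflop type; without flatness the quadric fibration underlying $(\Ss,\Bb_0)$ degenerates and Theorem \ref{theorem: derived equivalence} fails. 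Granting both, Theorem \ref{dmthm} is a formal consequence of the three cited equivalences.
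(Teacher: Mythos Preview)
Your proof is correct and follows exactly the paper's own argument: invoke Theorem~\ref{isikthm} for the complete intersection side, Theorem~\ref{theorem: derived equivalence} for the Clifford side, and Theorem~\ref{samecat} to bridge the two factorization categories associated to different fans. The paper's proof is a two-sentence version of what you wrote; your additional remarks on bookkeeping and on where the centrality and flatness hypotheses actually enter are accurate and helpful but not logically required.
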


\begin{proof}
By Theorem \ref{theorem: derived equivalence} and Theorem \ref{isikthm}
 the derived categories in question are equivalent to two derived categories of factorizations
 defined by different fans in $K^\vee$. These are then equivalent by
Theorem \ref{samecat}.
\end{proof}

\section{Generalization to Clifford algebras over complete intersections.}\label{general}
The goal of this section is to indicate a generalization of the Clifford algebra construction
which would encompass both complete intersections and Clifford algebras over toric bases.

\medskip
As always, we consider a pair of reflexive Gorenstein cones $K$ and $K^\vee$
in lattices $M$ and $N$ respectively. Let us denote the degree elements by $\deg\in K$
and $\deg^\vee\in K^\vee$  and introduce the index $k=\la \deg,\deg^\vee\ra$.
In addition, we consider generic coefficient function
$$
c:K_{(1)}\to \Cc.
$$

\medskip
As explained in Section \ref{section2}, Calabi-Yau complete intersections in toric varieties appear
as a consequence of a decomposition of $\deg^\vee$ into a linear combination of elements
of $K^\vee_{(1)}$ with coefficients $1$. As we saw in Section \ref{defcliff} the Clifford algebras over
toric bases arise in the context of decomposition of $\deg^\vee$ into a linear combination with
coefficients $\frac 12$. We will now consider a more general case where both types of coefficients
may appear.

\medskip
Suppose that we have
$$
\deg^\vee =  \frac{1}{2}(s_1 + \cdots + s_{2r}) + t_1 + \cdots
+ t_{k-r}
$$
for some $0\leq r\leq k$. The $(k+r)$ elements $s_i$ and $t_j$ are supposed to be
linearly independent. In addition, there should exist (and be chosen) a
regular simplicial fan $\Sigma$ with support  $K^\vee$
such that the following centrality condition holds.
\begin{equation}\label{central3}
\emph{All maximum dimensional cones of $\Sigma$ contain
$\{s_i,t_j\}$ as ray generators.}
\end{equation}

\begin{remark}
The motivation for the above condition is our philosophy of large  K\"ahler limits of the
families of $N=(2,2)$ SCFTs. There is a large class of such limits given by different regular
simplicial cones $\Sigma$. We are interested in the situation where maximum cones of
$\Sigma$ contain $\deg^\vee$. While other limits may be of interest as well, they are likely
to lead to more complicated geometric descriptions of the triangulated category of boundary
conditions. Given such  fan $\Sigma$ it is natural to try to describe the
minimum cone that $\deg^\vee$ lies in, which is the intersection of all the maximum cones
of $\Sigma$. The element $\deg^\vee$ is a positive rational combination of the generators
of this cone, and in this paper we consider the case when these coefficients are $\frac 12$
or $1$.
\end{remark}

\begin{remark}
The case $r=k$ was considered in Section \ref{defcliff} and led to (sheaves of) Clifford algebras
over toric bases. The case $r=0$ is the usual complete intersection case reviewed in Section \ref{section2}.
\end{remark}

As in Section \ref{defcliff} we consider the open subset $U_\Sigma$ of $\Cc^{K^\vee_{(1)}}$
of functions
$$
{\bf z}:K^\vee_{(1)}\to \Cc
$$
such that the preimage of $0$ is a subset of $\Sigma$. Similarly, we consider the
subset
$$
U_{\overline\Sigma} \subset \Cc^{K^\vee_{(1)}-\{s_1,\ldots,s_{2r},t_1,\ldots,t_{k-r}\}}
$$
that corresponds to the stacky fan $\overline\Sigma$ for the group
$$
\overline N  = N /\Zz s_1 + \cdots + \Zz s_{2r}+ \Zz \deg^\vee+ \Zz
t_1+\cdots+ \Zz t_{k-r}. $$
We have
$$
U_\Sigma = U_{\overline\Sigma} \times \Cc^{2r}\times \Cc^{k-r}
$$
where the last coordinates correspond to values of ${\bf z}$ at $s_i$ and $t_i$.

\medskip
There is a group $\hat G$ defined by
$$
\hat G:=\{ { \bf \lambda}:K^\vee_{(1)}\to \Cc^*\Big\vert
\prod_{n\in K^\vee_{(1)}} \lambda(n)^{\la m,n\ra} = 1,~{\rm for~all~}m\in {\rm Ann}(\deg^\vee)\}
$$
as in Section \ref{defcliff}.
The analog of the subgroup $H$ of $\hat G$ which will be denoted by the same name is
given by $\Cc^*$ with
$$
\lambda(s_i) = t, \lambda(t_i) = t^2, \lambda(v)=1,{\rm ~for~all~}v\in K^\vee_{(1)}-\{s_1,\ldots,s_{2r},
t_1,\ldots,t_{k-r}\}.
$$
As in Section \ref{defcliff}, we see that the toric DM stack that corresponds to $(\overline N,\overline\Sigma)$ can be realized as the quotient of $U_{\overline\Sigma}$ by $\overline G= \hat G/H$.

\medskip
Note that the homogeneous polynomial
$$ C({\bf z})=\sum_{m\in K_{(1)} }c(m)  \prod_{n\in
K^\vee_{(1)}} z(n)^{\la m,n\ra}
$$
has total degree $2$ with respect to $H$. It has terms linear in
${\bf z}(t_i)$ and terms that have no ${\bf z}(t_i)$ but are
quadratic in ${\bf z}(s_i)$. We define the quadratic term by
$$ C_2({\bf z}) = \sum_{m\in K_{(1)} \cap {\rm
Ann}(t_1,\ldots,t_{k-r})} c(m)\prod_{n\in  K^\vee_{(1)}}
z(n)^{\la m,n\ra}.
$$
We use the linear terms to define the complete
intersection $Y\subset U_{\overline\Sigma}$ given by
$$ Y=\bigcap_{i=1}^{k-r} \Big\{\sum_{m\in K_{(1)}, \la m,
t_i\ra =1} c(m) \prod_{n\in  K^\vee_{(1)}-\{s_1,\ldots,s_{2r},
t_1,\ldots,t_{k-r}\}}z(n)^{\la m,n\ra}=0\Big\}
$$
where this intersection may assumed to be transversal if
the coefficient function $c$ is general. We then define the sheaf of
Clifford algebras  $\Bb_0$ on $\Ss=[Y/\overline G]$ as the pullback
of the sheaf of Clifford algebras on $[U_{\overline\Sigma}/\overline
G]$ under the natural inclusion. The aforementioned sheaf of
Clifford algebras is defined by using the quadratic part $C_2({\bf
z})$  of $C({\bf z})$. We will formulate this definition along the
lines of Remark \ref{explicitdefcliff}.

\begin{definition}\label{def.cliff.gen}
The category of coherent sheaves on $(\Ss,\Bb_0)$ is defined
as the category of $\overline G$-equivariant sheaves over the even part of the
(locally constant) sheaf of Clifford algebras over the (reduced) scheme
$Y\subseteq U_{\overline\Sigma}$ given by
$$
\Big(\Oo_{Y}\{y_1,\ldots,y_{2r}\}/\la (\sum_{i=1}^{2r} z_iy_i)^2+C_2({\bf z}),~~{\rm for~all~}z_1,\ldots,z_{2r} \ra\Big)_{even}
$$
where $y_1,\ldots, y_{2r}$ are free noncommuting variables.
\end{definition}

\begin{remark}
The subscript \emph{even} refers to the parity of the number of
$y_i$. Here the action of ${\overline G}=\hat G/H$ on
$y_1,\ldots,y_{2k}$ is defined as follows. We have the group $\hat
G$ act by scaling on all $z(n)$, in
particular on $z_1,\ldots,z_{2r}$. We will define its action on
products of even number of $y_i$ as follows. For ${\bf \lambda}\in
\hat G$ such that its image in $\hat G/G = \Cc^*$ is
$\varphi(\lambda)$ we define
$$
\lambda(y_iy_j) = \lambda_i^{-1}\lambda_j^{-1}\varphi(\lambda) y_iy_j
$$
to be the inverse of the action on corresponding $z_i$ twisted by
$\varphi(\lambda)$. This ensures that
$\lambda\Big((\sum_{i=1}^{2r}z_iy_i)^2\Big) = \varphi(\lambda)
(\sum_{i=1}^{2r}z_iy_i)^2$. On the other hand, $C_2({\bf z})$ is
also semi-invariant with respect to $\hat G$
with the character $\varphi(\lambda)$. Thus the ideal in Definition \ref{def.cliff.gen}
is preserved under
$\hat G$. Note that an element $t\in H=\Cc^*$ acts trivially on
 $\Oo_{U_{\overline\Sigma}}$ and on $y_iy_j$, since $\lambda_i=\lambda_j=t$
and
$\varphi(\lambda)=t^2$. Thus the action of $\hat G$ descends to the action
of $\hat G/H={\overline G}$.
\end{remark}

It is now reasonable to conjecture that these
 more sophisticated Clifford limits give the same triangulated
categories as any other limits we have considered.
\begin{conjecture}
Under the centrality and appropriate flatness assumptions, the bounded derived category of coherent sheaves on $(\Ss,\Bb_0)$ is equivalent to the category $D_B(K,c,\Sigma)$.
\end{conjecture}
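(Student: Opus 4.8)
The plan is to deduce the conjecture by factoring the desired equivalence through the two special cases already in hand: the complete intersection case $r=0$ (Theorem~\ref{isikthm}) and the toric Clifford case $r=k$ (Theorem~\ref{theorem: derived equivalence}). Concretely, one first ``integrates out'' the $t_i$-directions, passing from $D_B(K,c;\Sigma)$ to a category of matrix factorizations of a purely quadratic potential over the complete intersection $Y$, and then applies Kuznetsov's quadric-fibration formalism relative to this base to identify that category with $D^b(\Ss,\Bb_0)$. Theorem~\ref{samecat} makes the resulting statement insensitive to the choice of regular simplicial fan $\Sigma$.

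First I would analyze the potential. As noted in Section~\ref{general}, a lattice point $m\in K_{(1)}$ with $\la m,t_i\ra=1$ for some $i$ has $\la m,s_j\ra=0$ for all $j$, since $\la m,\deg^\vee\ra=1$ and $\deg^\vee=\tfrac12\sum_j s_j+\sum_i t_i$. Hence $C(\zz)=\sum_{i=1}^{k-r} z(t_i)\,\overline h_i(\zz)+C_2(\zz)$, where the $\overline h_i$ involve only the coordinates indexed by $K^\vee_{(1)}-\{s_1,\dots,s_{2r},t_1,\dots,t_{k-r}\}$ and $C_2$ is quadratic in $z(s_1),\dots,z(s_{2r})$ over $U_{\overline\Sigma}$. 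The first step is then the equivariant, relative form of the Isik--Shipman--Orlov equivalence with a residual potential (the same mechanism used to prove Theorem~\ref{isikthm}, see \cite{FK14,Isik12,Shi12}): because $C$ is linear in the affine coordinates $z(t_i)$, matrix factorizations of $C$ on $[U_\Sigma/\hat G]$ are equivalent to matrix factorizations of $C_2$ restricted to the rank-$2r$ vector bundle $E\to\Ss$ obtained by restricting the $\Cc^{2r}$-bundle $U_\Sigma\to U_{\overline\Sigma}$ to $Y$. Thus $D_B(K,c;\Sigma)\cong D^b\big(coh\,[E/\overline G],\,C_2\big)$, provided $c$ is generic enough that $Y$ is smooth of the expected dimension (a Bertini argument), with the $\Cc^*$-grading coming as usual from $\hat G/G$.

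The second step is precisely the content of the proof of Theorem~\ref{theorem: derived equivalence}, but carried out over the base $\Ss=[Y/\overline G]$ rather than over a toric stack. Under the flatness assumption of Remark~\ref{flatornot} the associated quadric fibration $\pi:\Xx\to\Ss$ (the hypersurface $\{C_2=0\}$ in the projectivization $\Pp_\Ss(E)$) is flat, so a functorial application of Kuznetsov's theorem \cite{Kuz08} gives a semiorthogonal decomposition $D^b(\Xx)=\la D^b(\Ss,\Bb_0),\,\pi^*D^b(\Ss)\otimes\Oo_{\Xx/\Ss}(1),\dots,\pi^*D^b(\Ss)\otimes\Oo_{\Xx/\Ss}(2r-2)\ra$, while the relative and equivariant Orlov theorem of \cite{BDFIK14} (resting on \cite{BFK12}) gives $D^b(\Xx)=\la \pi^*D^b(\Ss)\otimes\Oo_{\Xx/\Ss}(1),\dots,\pi^*D^b(\Ss)\otimes\Oo_{\Xx/\Ss}(2r-2),\,D^b\big(coh\,[E/\overline G],\,C_2\big)\ra$. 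Comparing the two orthogonal complements of the common admissible subcategory generated by the twisted pullbacks of $D^b(\Ss)$ yields $D^b(\Ss,\Bb_0)\cong D^b\big(coh\,[E/\overline G],\,C_2\big)\cong D_B(K,c;\Sigma)$.

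The main obstacle is that $\Ss=[Y/\overline G]$ is no longer toric: one must verify that Kuznetsov's quadric-fibration formalism and the relative Orlov equivalence of \cite{BDFIK14} genuinely extend to this equivariant/stacky base (with a possibly non-saturated $\overline N$), and that the bundle $E$, the twists $\Oo_{\Xx/\Ss}(i)$, and the functors and gradings involved are identified correctly after separating the roles of $\hat G$, $G$, $H$ and $\overline G$; since the relevant constructions are functorial this should be routine but requires care. One also needs a clean formulation of the ``appropriate flatness assumption'' --- guaranteeing simultaneously that $Y$ is smooth of expected dimension and that $C_2$ restricts to a nonzero quadratic form on every fiber of $E$ --- in terms of the combinatorics of $(K,K^\vee)$ and the genericity of $c$, along the lines of Sections~\ref{sectioncomb} and~\ref{noflatness}. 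A possible alternative that might streamline the bookkeeping of the two reductions is to realize the zero locus $\{C=0\}$ as a single intermediate stack carrying both semiorthogonal decompositions at once, bypassing the separate $t$-reduction; whether this is actually simpler than composing the two steps is itself worth investigating.
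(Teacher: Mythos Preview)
The statement you are attempting to prove is explicitly labeled a \emph{conjecture} in the paper and is left open there; the paper provides no proof and does not even sketch an approach beyond stating it. There is therefore nothing in the paper to compare your argument against directly.

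That said, your outline is the natural one and is fully in the spirit of the paper's methods. You are proposing to compose the two mechanisms the paper uses in the extreme cases: the Isik--Shipman reduction (Theorem~\ref{isikthm}, the $r=0$ case) to eliminate the linear variables $z(t_1),\ldots,z(t_{k-r})$ and land on a category of matrix factorizations of the residual quadratic potential $C_2$ over the complete intersection $Y$, followed by the Kuznetsov/BDFIK comparison (the proof of Theorem~\ref{theorem: derived equivalence}, the $r=k$ case) applied relative to the non-toric base $\Ss=[Y/\overline G]$. Your identification of the obstacles is accurate: the paper's own proof of Theorem~\ref{theorem: derived equivalence} already invokes the functoriality of Kuznetsov's construction to pass to a stacky base, so extending that step to $\Ss$ should indeed be routine; and the ``appropriate flatness assumption'' does need to be made precise, exactly as you say, as the simultaneous requirement that $Y$ be smooth of expected dimension and that $C_2$ be fiberwise nonzero on $E$.

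The one place where your sketch is genuinely incomplete rather than merely informal is the first reduction. What you need is not Theorem~\ref{isikthm} itself but a version with residual potential: an equivalence between $\hat G$-equivariant factorizations of $\sum_i z(t_i)\overline h_i + C_2$ on $U_\Sigma$ and factorizations of $C_2$ on the restriction of the $\Cc^{2r}$-bundle to $Y$, with the grading group correctly tracked (in particular, the subgroup of $\hat G$ ``used up'' by the Isik--Shipman step is not $H$, since $H$ scales $z(t_i)$ with weight~$2$ and $z(s_j)$ with weight~$1$). Such statements exist in the literature on relative Kn\"orrer periodicity and in \cite{BDFIK14}, but pinning down the exact equivariant form and checking that the output carries precisely the $\overline G$-structure used in Definition~\ref{def.cliff.gen} is real work that your proposal defers. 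This is a gap in completeness, not in strategy; the approach is sound, and filling it would constitute a proof of the conjecture rather than a reproduction of anything in the paper.
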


\section{Combinatorics of Clifford decompositions.}\label{sectioncomb}
In the first part, we explore the combinatorics
associated to a decomposition
\begin{equation}\label{decgen}
\deg^\vee = \frac{1}{2}(s_1 + \cdots + s_{2r}) + t_1 + \cdots
+ t_{k-r}.
\end{equation}
Large parts of this section can be read independently from the more technical parts of the paper
that discuss derived equivalences and Clifford algebras.

\medskip
Just as before, we have a pair of reflexive Gorenstein cones $K
\subset M_\Rr, K^\vee \subset N_\Rr$ of index $k$. Suppose there
exists a decomposition \eqref{decgen} where
$s_i, t_j \in K^\vee_{(1)}$ are linearly independent lattice
elements. We define a lattice
\[\overline{N}_{free} = N/{(N \cap (\sum_{i=1}^{2r} \Rr
\cdot s_i + \sum_{j=1}^{k-r}\Rr \cdot t_j))}.\]
Notice that $\overline{N}_{free}$ is the quotient of
$\overline N$ defined in Section \ref{section2} by its torsion subgroup. Induced from the
pairing between $M$ and $N$, the dual lattice of $\overline N_{free}$ is
\[
\begin{split}
\overline{M}&=\Ann(s_1, \cdots, s_{2r}, t_1,\cdots,t_{k-r})\\
&=\{m \in M \mid \langle m ,s_i \rangle =\langle m ,t_j
\rangle =0~ \forall~ 1 \leq i \leq 2r, 1 \leq j \leq k-r\}.
\end{split}
\]We also use $\langle -, - \rangle$ to denote the induced perfect pairing $\overline{M} \times {\overline{N}_{free}} \to \Zz$.

\medskip

Let $\Theta$ be the image of the convex hull of $K_{(1)}^\vee$ in
$\overline{N}$. It is a lattice polytope by the definition of
Gorenstein cone, and it contains the origin as an interior element.
Indeed, any linear function on ${\overline{N}_{free}}$
 lifts to a linear function on $N$
which is zero on $\deg^\vee$. Since $\deg^\vee$ is in the interior
of $K^\vee$, this  function takes positive and negative values on
some rays of $K^\vee$. Therefore, any linear function on
${\overline{N}_{free}}$ takes positive and negative values
on $\Theta$, which implies that the origin lies in the interior of
$\Theta$.

\medskip
We introduce a polytope in $\overline M_\Rr$ defined by
\begin{equation}\label{eq: T}
T: = \{x \in K \mid \langle x,s_i\rangle = \langle x, t_j
\rangle = 1~ \forall~ i, j \} -\deg .
\end{equation}
We should point out that $T$ may not be a lattice
polytope, however, we will show that its dual polytope is the
lattice polytope $\Theta$.

\begin{lemma}
The polytope $T$ contains origin in its interior. There holds
$$
T^\vee = \{y\in{\overline N_\Rr},{\rm such~that~} \la T,y\ra\geq -1\}=\Theta.
$$
\end{lemma}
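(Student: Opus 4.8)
The whole statement follows from the single identity $T=\Theta^\vee$, where $\Theta^\vee:=\{x\in\overline M_\Rr:\langle x,y\rangle\ge -1\ \text{for all }y\in\Theta\}$ is the polar dual of $\Theta$. Indeed, the text has already established that $\Theta$ is a lattice polytope with the origin in its interior; hence $\Theta^\vee$ is a bounded polytope with the origin in its interior and $(\Theta^\vee)^\vee=\Theta$ by biduality, so $T=\Theta^\vee$ immediately gives both that the origin lies in the interior of $T$ and that $T^\vee=\Theta$. Note that we never need to know that $T$ itself is a lattice polytope — and in general it is not.

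First I would rewrite $\Theta^\vee$ concretely. Let $\phi\colon N_\Rr\to\overline N_\Rr$ be the quotient map; its kernel is the real span of $s_1,\dots,s_{2r},t_1,\dots,t_{k-r}$, and $\overline M_\Rr=\Ann(s_1,\dots,s_{2r},t_1,\dots,t_{k-r})_\Rr$ is precisely the space of functionals vanishing on $\ker\phi$. Consequently $\langle x,\phi(z)\rangle=\langle x,z\rangle$ for all $x\in\overline M_\Rr$ and $z\in N_\Rr$. Since $\Theta=\phi(\Conv K^\vee_{(1)})$ and an affine-linear inequality holds on the convex hull of a finite set iff it holds at those points, this yields
$$\Theta^\vee=\{x\in\overline M_\Rr:\langle x,n\rangle\ge -1\ \text{for all }n\in K^\vee_{(1)}\}.$$

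The core step is to match this description with $T$ via the translation $x=p-\deg$ that defines $T$ out of $P:=\{p\in K:\langle p,s_i\rangle=\langle p,t_j\rangle=1\ \forall\,i,j\}$; concretely, I would show $p=x+\deg\in P$ if and only if $x\in\Theta^\vee$. On the one hand, every $s_i$ and $t_j$ lies in $K^\vee_{(1)}$, so $\langle\deg,s_i\rangle=\langle\deg,t_j\rangle=1$ and $\langle p,s_i\rangle=\langle x,s_i\rangle+1$ (and likewise for $t_j$); hence the degree-one conditions cutting out $P$ are equivalent to $x$ annihilating all the $s_i,t_j$, i.e.\ to $x\in\overline M_\Rr$. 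On the other hand, using $K=(K^\vee)^\vee$ it suffices to test $p\in K$ on the primitive ray generators of $K^\vee$, and because $K^\vee$ is Gorenstein with degree element $\deg$ those generators all lie in $\Delta^\vee\cap N=K^\vee_{(1)}$; then $\langle p,n\rangle=\langle x,n\rangle+\langle\deg,n\rangle=\langle x,n\rangle+1$ for $n\in K^\vee_{(1)}$, so $p\in K$ is equivalent to $\langle x,n\rangle\ge -1$ for all $n\in K^\vee_{(1)}$. Combining the two equivalences gives $p\in P\iff x\in\Theta^\vee$, hence $T=\Theta^\vee$, as desired.

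There is no genuinely difficult step: the argument is essentially bookkeeping around the translation by $\deg$, combined with biduality of polytopes. The one point deserving care is the use of the Gorenstein property of $K^\vee$ — it is exactly what forces the primitive ray generators of $K^\vee$ to be degree-one lattice points, and this is what allows the condition $p\in K$ (a priori an inequality against all of $K^\vee$) to be reduced to inequalities against the finite set $K^\vee_{(1)}$ appearing in the definition of $\Theta$.
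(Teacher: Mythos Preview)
Your proposal is correct and follows essentially the same approach as the paper: both arguments prove $T=\Theta^\vee$ by unwinding the definition of $\Theta^\vee$ to $\{x\in\overline M_\Rr:\langle x,K^\vee_{(1)}\rangle\ge -1\}$, then translating by $\deg$ to match the defining conditions of $T+\deg$, and finally invoking biduality together with the already-established fact that ${\bf 0}\in\Theta^\circ$. Your write-up is slightly more explicit about why the Gorenstein condition is needed (to reduce the cone inequality $p\in K$ to finitely many tests against $K^\vee_{(1)}$), but the logic is identical.
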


\begin{proof}
Let us investigate the dual of $\Theta$. By definition of the dual
polytope and of $\Theta$, the dual $\Theta^\vee$ is
 a subset of $\overline M_\Rr$ which consists of
$x$ such that $\la x, K^\vee_{(1)}\ra \geq -1$. In other words,
this is a subset of $M_\Rr$ such that
$\la x,K^\vee_{(1)}\ra \geq -1$ and
$\la x,s_i\ra= \la x,t_j\ra = 0$ for all $i$ and $j$.

\medskip
Equivalently, $x+\deg$ can be characterized by
$$
\la x+\deg, K^\vee_{(1)}\ra \geq 0,~
\la x+\deg,s_i\ra= \la x+\deg,t_j\ra = 1,~{\rm for~all~}i,j
$$
which is precisely the definition of $T+\deg$. This
shows that $T=\Theta^\vee$, which implies $T^\vee=\Theta$ and ${\bf 0}\in T^\circ$.
\end{proof}

\medskip
We will now define the following three sets of polytopes:
\[
A_i = \Conv\{x \in K_{(1)} \mid \langle x, t_i \rangle = 1\}, ~1 \leq i \leq k-r
\] and
\[
\begin{split}
T_{i,i} &= \Conv\{x \in K_{(1)} \mid \langle x, s_i \rangle= 2\}, ~1 \leq i \leq 2r\\
T_{i,j} &=   \Conv\{x \in K_{(1)} \mid \langle x, s_i \rangle= \langle x, s_j \rangle  =1\},~ 1 \leq i, j \leq 2r {\rm~and ~}i \neq j.
\end{split}
\] Because a lattice point $x \in K_{(1)}$ pairs with $\deg^\vee$ at
$1$, $x$ must uniquely lie in one of $A_i, T_{i,i} $ and $T_{i,j}$. In
other words, the primitive elements of rays of $K$ can be classified
according to the polytopes $A_i, T_{i,i} $ or $T_{i,j}$ they lie in.
Note that elements of these sets pair by $0$ with the other $s$ and $t$ points.

\medskip

We now define
\[\DT : = \Conv \left(\bigcup_{\sigma \in S_{2r}} \sum_{ 1
\leq i \leq 2r} T_{i, \sigma(i)}\right)\] be a lattice polytope,
where $S_{2r}$ denotes the $2r$-symmetric group. A priori, $\DT$
could be empty, but it is a consequence of Theorem \ref{dual
polytope} that this can never happen in our setting. Let
\[S: = \sum_{i = 1}^{2r} A_i + \frac{1}{2} \DT -\deg\] be a polytope.
By pairing with $s_i,t_j$, one can verify directly that $S \subset
\overline M$, and we will show in Theorem \ref{dual polytope} that it
coincides with $T$, that is $S = \Theta^\vee$.
In order to prove it we need to first recall the following so called Birkhoff--von
Neumann theorem.

\begin{definition}
A permutation matrix is the matrix with exactly one entry $1$ in
each row and column and $0$ elsewhere.
\end{definition}

\begin{lemma}(Birkhoff--von Neumann theorem)\label{BVN theorem}
Suppose $B_n$ is an $n\times n$ matrix whose entries are non-negative
real numbers and whose rows and columns each add up to $1$. Then
$B_n$ lies in the convex hull of the set of $n\times n$ permutation
matrices.
\end{lemma}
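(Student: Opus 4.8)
The plan is to prove Lemma \ref{BVN theorem} by induction on the number of strictly positive entries of $B_n$, with the inductive step powered by Hall's marriage theorem. First I would record the trivial lower bound: since every row of $B_n$ sums to $1$ and all entries are nonnegative, each row has at least one positive entry, so $B_n$ has at least $n$ positive entries; and if it has exactly $n$, then each row and each column has exactly one positive entry, which must therefore equal $1$, so $B_n$ is already a permutation matrix and there is nothing to prove.

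For the inductive step, the key claim is that $B_n$ always admits a \emph{positive transversal}: a permutation $\sigma\in S_n$ with $B_{i,\sigma(i)}>0$ for all $i$. I would prove this by forming the bipartite graph on the row-indices and column-indices, with an edge $(i,j)$ whenever $B_{i,j}>0$, and verifying Hall's condition. Given a set $R$ of rows, let $C=N(R)$ be the set of columns joined by an edge to some row of $R$. The total mass of $B_n$ carried by the rows in $R$ is exactly $|R|$, and that mass is supported entirely on the columns of $C$; since each column of $B_n$ has total mass $1$, the mass in the columns of $C$ is exactly $|C|$, forcing $|C|\ge |R|$. Hence Hall's condition holds, a perfect matching exists, and this matching is the desired $\sigma$. (One could instead invoke K\"onig's theorem or do a direct cycle-chasing argument, but the Hall formulation is the cleanest.)

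Given such a $\sigma$, let $P_\sigma$ be its permutation matrix and set $c:=\min_i B_{i,\sigma(i)}>0$. If $c=1$ then, since each row of $B_n$ sums to $1$, we get $B_n=P_\sigma$ and we are done. If $c<1$, put $B_n':=\tfrac{1}{1-c}(B_n-cP_\sigma)$. A direct check shows that $B_n'$ has nonnegative entries (by the choice of $c$) and that its rows and columns still sum to $1$, so it is again doubly stochastic; moreover every entry $B_{i,\sigma(i)}$ equal to $c$ has become $0$, while no previously zero entry has become positive, so $B_n'$ has strictly fewer positive entries than $B_n$. By the inductive hypothesis $B_n'$ is a convex combination of permutation matrices, and then $B_n=cP_\sigma+(1-c)B_n'$ exhibits $B_n$ as such a combination, completing the induction.

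The only genuinely nontrivial ingredient is the existence of the positive transversal, so I expect the verification of Hall's condition to be the heart of the argument, even though it is itself short; the rest is bookkeeping. An alternative streamlined write-up would appeal to Carath\'eodory's theorem applied to the Birkhoff polytope of doubly stochastic matrices, together with the fact that its vertices are precisely the permutation matrices — but identifying the vertices reduces again to the same transversal argument, so I would keep the elementary inductive proof above.
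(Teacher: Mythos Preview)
Your proof is correct and is the standard inductive argument via Hall's marriage theorem. The paper itself does not prove this lemma at all: it is merely \emph{recalled} as the well-known Birkhoff--von Neumann theorem, so there is no proof in the paper to compare against.
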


We now state the following key result:

\begin{theorem}\label{dual polytope}
The polytope \[S: =  \sum_{i = 1}^{k-r} A_i + \frac{1}{2}
\Conv \left(\bigcup_{\sigma \in S_{2r}} \sum_{ 1 \leq i \leq 2r}
T_{i, \sigma(i)}\right) -\deg\] is equal to  $T$. In particular, its
dual polytope is $\Theta$.
\end{theorem}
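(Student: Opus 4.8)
The plan is to prove the identity $S=T$ by two inclusions; the assertion about dual polytopes is then immediate, since the lemma immediately above shows $T^\vee=\Theta$. Throughout I will use the elementary observation that the primitive generator of every ray of $K$ lies in $K_{(1)}$ --- indeed $K$ is Gorenstein with degree element $\deg^\vee$, so every such generator pairs with $\deg^\vee$ to $1$ --- and hence, by the classification of degree-one lattice points recorded just before the statement, falls into exactly one of the sets $A_i$ ($1\le i\le k-r$), $T_{ii}$ ($1\le i\le 2r$), or $T_{ij}$ ($i\ne j$). Consequently any $v\in K$ admits a decomposition
\[
v = \sum_{i=1}^{k-r} w_i + \sum_{1\le i\le j\le 2r} v_{ij},\qquad w_i\in\Rr_{\ge 0}A_i,\quad v_{ij}\in\Rr_{\ge 0}T_{ij},
\]
obtained by grouping the terms of any expression of $v$ as a nonnegative combination of ray generators of $K$.

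For the inclusion $S\subseteq T$ I would argue by a direct pairing computation resting on one combinatorial identity: for every permutation $\sigma\in S_{2r}$ and every index $j$, the Minkowski sum $\sum_{i=1}^{2r}T_{i,\sigma(i)}$ pairs to $2$ with $s_j$ and to $0$ with each $t_l$. This holds because $s_j$ receives weight $1$ from the summand indexed by $i=j$ and a further weight $1$ from the summand indexed by $i=\sigma^{-1}(j)$ when $\sigma(j)\ne j$, while if $\sigma(j)=j$ the single summand $T_{jj}$ already contributes weight $2$; and the $T$-polytopes pair to $0$ with all $t$'s. Passing to convex hulls, $\DT$ itself pairs to $2$ with every $s_j$ and to $0$ with every $t_l$, while $A_i$ pairs to $\delta_{il}$ with $t_l$ and to $0$ with every $s_j$. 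Since all the $A_i$ and $T_{ij}$ lie in $K$ and $K$ is a convex cone, any $p=\sum_i a_i+\frac12 d$ with $a_i\in A_i$, $d\in\DT$ lies in $K$, and its pairing with each $s_j$ and each $t_l$ equals $1$; hence $p-\deg\in T$, i.e. $S\subseteq T$.

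The reverse inclusion $T\subseteq S$ is the heart of the matter and the place where Birkhoff--von Neumann enters. Given $x+\deg\in T+\deg$, write $v:=x+\deg$ in the decomposition above. Pairing $v$ with $t_j$ kills every summand except $w_j$ and yields $\langle w_j,t_j\rangle=1$; since $w_j$ is a nonnegative combination of lattice points of $A_j$, each pairing to $1$ with $t_j$, the coefficients must sum to $1$, so $w_j\in A_j$ (in particular $A_j\ne\varnothing$). Pairing $v$ with $s_k$ and writing $\mu_{ij}\ge 0$ for the total coefficient appearing on $T_{ij}$ (so $v_{ij}=\mu_{ij}\bar\tau_{ij}$ with $\bar\tau_{ij}\in T_{ij}$ whenever $\mu_{ij}>0$) gives $2\mu_{kk}+\sum_{l\ne k}\mu_{kl}=1$ for all $k$. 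Thus the symmetric matrix $P$ with $P_{kk}=2\mu_{kk}$ and $P_{kl}=\mu_{kl}$ for $k\ne l$ is doubly stochastic, so by Lemma \ref{BVN theorem} $P=\sum_\sigma d_\sigma P_\sigma$ with $d_\sigma\ge 0$ and $\sum_\sigma d_\sigma=1$. For each $\sigma$ with $d_\sigma>0$ and each $i$ put $p_i^\sigma:=\bar\tau_{\{i,\sigma(i)\}}\in T_{i,\sigma(i)}$ (well defined, since $\mu_{ij}=0$ forces $d_\sigma=0$ for every $\sigma$ with $\sigma(i)=j$); then the coefficient of $\bar\tau_{ab}$ in the convex combination $\sum_\sigma d_\sigma\sum_i p_i^\sigma$ is $P_{ab}+P_{ba}=2\mu_{ab}$ for $a\ne b$ and $P_{aa}=2\mu_{aa}$ for $a=b$, so this convex combination equals $2\sum_{i\le j}v_{ij}$. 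As each $\sum_i p_i^\sigma$ lies in $\DT$, convexity gives $2\sum_{i\le j}v_{ij}\in\DT$, hence $\sum_{i\le j}v_{ij}\in\frac12\DT$ and $v\in\sum_i A_i+\frac12\DT$, i.e. $x\in S$.

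I expect the only genuine obstacle to be the bookkeeping in the last paragraph: keeping the factors of $2$ straight between the diagonal entries $P_{aa}=2\mu_{aa}$ and the symmetrized off-diagonal entries $P_{ab}+P_{ba}=2\mu_{ab}$, and verifying that the chosen points $\bar\tau$ are defined exactly where they are needed. Everything else is formal. Note that applying the $T\subseteq S$ argument to $v=\deg\in T+\deg$ also shows $\DT\ne\varnothing$, which settles the "a priori could be empty" caveat; and once $S=T$ is proved, $S^\vee=T^\vee=\Theta$ by the lemma just cited. The degenerate cases $r=0$ (where $\DT=\{\mathbf 0\}$ and the statement reduces to Lemma \ref{above}) and $r=k$ (no $A_i$'s) are handled by the same argument with the obvious conventions for empty Minkowski sums and the trivial symmetric group.
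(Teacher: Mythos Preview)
Your proposal is correct and follows essentially the same route as the paper: both directions are proved by the same pairing computations, and the key inclusion $T\subseteq S$ is obtained by decomposing $x+\deg$ along the ray generators, separating the $A$-part from the $T$-part, and applying Birkhoff--von~Neumann to the matrix of $T$-coefficients. The only cosmetic difference is the normalization: the paper keeps the symmetric coefficient matrix with row sums $\tfrac12$ and writes it as $\sum_\sigma r_\sigma P_\sigma$ with $\sum r_\sigma=\tfrac12$, whereas you double the diagonal to get an honestly doubly stochastic $P$ and a convex combination with $\sum d_\sigma=1$; and you spell out the easy inclusion $S\subseteq T$ that the paper dismisses as ``by definition.''
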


\begin{proof}
By definition of $T$ (see equation \eqref{eq: T}), we only need to show
\[\begin{split}
&\sum_{i = 1}^{k-r} A_i + \frac{1}{2} \Conv \left(\bigcup_{\sigma \in S_{2r}} \sum_{ 1 \leq i \leq 2r} T_{i, \sigma(i)}\right) \\
&= \{x \in K \mid \langle x,s_i\rangle = \langle x, t_j \rangle = 1~
\forall~ i, j \}.
\end{split}
\] The inclusion $\subseteq$ is obtained by definition, thus we only need to show the inverse inclusion.

\medskip

Suppose $x \in \{x \in K \mid \langle x,s_i\rangle = \langle x, t_j
\rangle = 1~ \forall~ i, j \}$, then \[x = \sum_{v\in K_{(1)}} \lambda_v v,
\quad \lambda_v \geq 0.\] Because the generators of $K$
can be classified according to the polytopes $A_i, T_{i,i}$ or
$T_{i,j}$ they lie in, we can rewrite the summation as
\[
x = \sum_i \sum_{v \in A_{i}}\lambda_v v + \sum_{i,j}\sum_{v \in
T_{i,j}} \lambda_v v.
\] Notice that when $i \neq j$, we have $T_{i,j}=T_{j,i}$.

\medskip

Let
\begin{equation}\label{equation: x-t and x-s}
\begin{aligned}
&x_t = \sum_i \sum_{v \in A_{i}}\lambda_v v, \\
&x_s = \sum_{i,j}\sum_{v \in T_{i,j}} \lambda_v v =
\sum_{i,j}b_{i,j}y_{i,j},
\end{aligned}
\end{equation}
 with $y_{i,j} \in T_{i,j}$. Besides, we require $y_{i,j}=y_{j,i}$
and  $b_{i,j}=b_{j,i}$.

\medskip
We have $x = x_t + x_s$, moreover, by pairing with $t_i$, we
have \[ 1 = \langle x, t_i \rangle = \langle x_t, t_i \rangle
=\sum_{v \in A_{i}} \lambda_v.
\] This implies that $\sum_{v \in A_{i}} \lambda_v v \in A_{i}$,
thus $x_t \in \sum_{i=1}^{k-r} A_{i}$. Hence, all we need to show is
$x_s \in \frac{1}{2}\DT$.

\medskip

By pairing with $s_j$, we have
\begin{equation}\label{eq: 1}
1 = \langle x , s_j \rangle = \langle x_s, s_j \rangle = 2b_{j,j} +
\sum_{\substack{1 \leq i \leq 2s \\ i\neq j}} b_{i,j} + \sum_{\substack{1 \leq i \leq 2s \\
i\neq j}}b_{j,i} = 2 \sum_{1 \leq i \leq 2s}b_{i,j}.
\end{equation}

Let $B_{2r} = (b_{i,j})_{i,j}$ be a $2r \times 2r$-symmetric matrix.
Then by $\eqref{eq: 1}$, we have \[ \sum_{1 \leq i \leq 2r}b_{i,j}
=\sum_{1 \leq j \leq 2r}b_{i,j} = \frac{1}{2}.
\] According to the Birkhoff--von Neumann theorem (Lemma~\ref{BVN
theorem}), $B_{2r}$ is a convex linear combination of permutation
matrices.

\medskip

There is a 1-1 correspondence between elements of $2r$-symmetric
group $S_{2r}$ and $2r \times 2r$ permutation matrices under the
map\[ \sigma \mapsto (\delta_{j,\sigma{(i)}})_{i,j}, \] where
$\delta_{j,\sigma{(i)}}$ is the Kronecker delta. Because of this,
there exist $r_\sigma \geq 0$, such that
\[b_{i,j} = \sum_{\sigma \in S_{2r}}r_\sigma \delta_{j,\sigma{(i)}}.
\] For fixed $j$, because $\sum_{1 \leq i \leq 2r}b_{i,j} = \frac{1}{2}$,
we have
\begin{equation}\label{eq: 2}
\sum_{1 \leq i \leq 2r} \sum_{\sigma \in S_{2r}} r_\sigma
\delta_{j,\sigma{(i)}} = \sum_{\sigma \in S_{2r}}
\sum_{\substack{j=\sigma{(i)}\\1 \leq i \leq 2r}} r_\sigma  = \sum_{\sigma \in S_{2r}} r_\sigma =
\frac{1}{2}.
\end{equation} We  use the equation \eqref{equation: x-t and x-s} to get
\[\begin{split}
x_s &= \sum_{i,j}b_{i,j}y_{i,j}=\sum_{i,j}(\sum_{\sigma \in
S_{2r}}r_\sigma \delta_{j,\sigma{(i)}})y_{i,j}\\
&= \sum_{\sigma \in S_{2r}}r_\sigma
(\sum_{\substack{j=\sigma{(i)}\\i,j}}y_{i,j}) = \sum_{\sigma \in
S_{2r}}r_\sigma (\sum_{1 \leq i \leq 2r}y_{i,\sigma(i)})
\end{split}
\] Combining this with $\eqref{eq: 2}$, we have \[x_s \in \frac{1}{2}\Conv
\left(\bigcup_{\sigma \in S_{2r}} \sum_{ 1 \leq i \leq 2s} T_{i,
\sigma(i)}\right),\] which finishes the proof.
\end{proof}

\begin{corollary}
The polytope $2T$ has lattice vertices.
\end{corollary}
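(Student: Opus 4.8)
The plan is to deduce this corollary directly from Theorem~\ref{dual polytope}, which identifies $T$ with the polytope
$S = \sum_{i=1}^{k-r} A_i + \frac12 \DT - \deg$. Each $A_i$ and each $T_{i,j}$ is by definition a lattice polytope (a convex hull of lattice points in $K_{(1)}$), and $\DT$ is a lattice polytope as well, being a convex hull of Minkowski sums of the $T_{i,\sigma(i)}$. Hence $2S = 2\sum_{i=1}^{k-r} A_i + \DT - 2\deg$ is a Minkowski sum of lattice polytopes (translated by the lattice point $-2\deg$), and a Minkowski sum of lattice polytopes is again a lattice polytope whose vertices are sums of vertices of the summands. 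Since $T = S$, we get $2T = 2S$ has lattice vertices.

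The key steps, in order, are: (1)~invoke Theorem~\ref{dual polytope} to replace $T$ by $S$; (2)~observe that each vertex of a Minkowski sum $P_1 + \cdots + P_m$ of polytopes is a sum $p_1 + \cdots + p_m$ where each $p_i$ is a vertex of $P_i$ (this is the standard fact that the normal fan of a Minkowski sum is the common refinement of the normal fans of the summands, so a vertex is selected by a generic linear functional and decomposes accordingly); (3)~check that each summand appearing in $2S$ is a lattice polytope. For step~(3), the polytopes $2A_i$ are lattice polytopes since $A_i$ is, and $\DT$ is a lattice polytope since each $T_{i,j}$ is a lattice polytope and Minkowski sums and convex hulls of lattice polytopes are lattice polytopes; the translation by $-2\deg \in M$ preserves the lattice-vertex property since $\deg \in K^\circ \cap M$ is a lattice point. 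Combining, every vertex of $2T = 2S$ is a lattice point.

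I do not expect any serious obstacle here; this is essentially a formal consequence of Theorem~\ref{dual polytope}. The only point requiring a word of care is that $S$ as written involves the factor $\frac12$ in front of $\DT$, so it is $2T$ rather than $T$ that is manifestly integral — which is exactly what the corollary asserts. One should also note that $\DT$ is nonempty (so that $S$ is genuinely a Minkowski sum with this summand present), but this is guaranteed by Theorem~\ref{dual polytope} itself, as remarked in the text just before that theorem. No flatness or centrality hypotheses enter; this is purely combinatorial.
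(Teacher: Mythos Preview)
Your proposal is correct and follows essentially the same approach as the paper: invoke Theorem~\ref{dual polytope} to write $2T = 2S = \sum_i 2A_i + \DT - 2\deg$, then observe that Minkowski sums, convex hulls, and lattice translates of lattice polytopes are again lattice polytopes. Your additional remarks about how vertices of Minkowski sums decompose are accurate but more than is needed here.
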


\begin{proof}
Indeed, by Theorem \ref{dual polytope} we see that
\[2T=2S= \sum_{i = 1}^{k-r} 2A_i + \Conv
\left(\bigcup_{\sigma \in S_{2r}} \sum_{ 1 \leq i \leq 2r} T_{i,
\sigma(i)}\right) -2\deg\]
is a lattice polytope, since Minkowski sums, convex hulls
and lattice shifts of lattice polytopes are again lattice polytopes.
\end{proof}

\begin{remark}
The
geometric meaning of the above corollary is that the toric variety
$\Pp$ defined by $T$ is Fano $\Qq$-Gorenstein with $(-2K_{\Pp})$ an
ample Cartier divisor.
\end{remark}

The data of the coefficient function
$$
c:K_{(1)}\to \Cc
$$
may be equivalently encoded as a collection of $(k-r)$
 Laurent polynomials $f_i$ with Newton polytopes
$A_i$ and an $(2r)\times(2r)$ symmetric matrix $R$ of Laurent
polynomials with Newton polytopes $T_{i,j}$. These data allow us to
consider a double cover of the complete intersection of $\{f_i=0\}$
in the toric variety defined by $T$, ramified
over $\{\det(R)=0 \}$. The resulting variety is
Calabi-Yau. However, it has singularities other than the ones coming
from the ambient toric varieties due to the loci of corank two or
higher of the matrix $R$. One can view the Clifford variety of
Section \ref{defcliff} as a noncommutative crepant resolution of
this double cover (with a certain Brauer class).

\medskip
In many (though not all) cases we have the important technical centrality assumption \eqref{central3}
that there exists a regular simplicial fan $\Sigma$ all of whose maximum dimensional
cones contain all $s_i$ and all $t_j$.

\medskip
As a consequence of the assumption \eqref{central3}, there is a
(stacky) fan $\overline\Sigma$ on the quotient
$$ \overline
N = N /
\Zz s_1 + \cdots + \Zz s_{2r}+
\Zz \deg^\vee +
 \Zz t_1+\cdots+ \Zz t_{k-r}$$
obtained by removing $s_i$ and $t_j$ from the sets of
$\Sigma$. Then we have natural line bundles
$\Ll'_{1},\ldots,\Ll'_{2r}$ and $\Ll_{1},\ldots,\Ll_{k-r}$
on the stack $\Pp_{\overline\Sigma}$ with the property that
$$
\bigotimes_{i=1}^{2r} \Ll'_i \otimes \bigotimes_{j=1}^{k-r} \Ll^{\otimes 2}_{j}
$$
is the square of the anticanonical bundle on $\Pp_{\overline\Sigma}$, as considered
in Section \ref{general}.

\begin{remark}
While $\Ll_i$ are pullbacks of the Cartier divisors from the coarse moduli space,
the same can not be guaranteed for $\Ll'_j$.
The data of the coefficient function
$c:K_{(1)}\to\Cc$ amounts to a choice of $(k-r)$ sections
$f_1,\ldots, f_{k-r}$ of $\Ll'_1,\ldots,\Ll'_{k-r}$ respectively.
\end{remark}

\medskip
Then we can view $A_{i}$ as the Newton polytopes that support the
sections of $\Ll_i$, and the polytopes $T_{i,j}$
 are the ones that support sections of $\Ll'_i\otimes \Ll'_j$ under
appropriate linearizations.

\begin{remark}
In the absence of the centrality condition \eqref{central3}, we still obtain a singular Calabi-Yau variety
which is a double cover of the complete intersection in a toric $\Qq$-Gorenstein Fano variety
given by the polytope $\Theta$. It would be interesting to see if one can find noncommutative
resolutions of its singularities and whether there is still a derived equivalence statement. A priori,
one no longer has the vector bundle structure on $U_\Sigma$, which prevents one from directly
using the work of Kuznetsov.
\end{remark}

\begin{remark}
It would be interesting to investigate possible torsion in
$$
\overline N = N / \Zz s_1+\cdots+ \Zz s_{2r}+ \Zz \deg^\vee+\Zz t_1 + \cdots + \Zz
t_{k-r}
$$
If the sets $T_{ii}$ are nonempty, then we one can show that this is
at most $2$-torsion. However, we don't
know if $T_{ii}\neq \emptyset$ holds in general.
\end{remark}

\section{More examples.}\label{sectionex}
We will describe some examples of Clifford double mirrors  in the
literature as well as new examples. Each of these examples consists of a pair of Calabi-Yau
varieties, and the evidence for the double mirror property comes from equivalence
of derived categories.

\subsection{Example: $(2,2,2)$-complete intersections in $\Cc\Pp^5$.}
This example is given by Mukai in \cite{Muk88} (see Examples
(1.5)(1.6)(2.2)). Let $q_i, 0 \leq i \leq 2$ be quadratic equations
in $\Cc\Pp^5$, and $Q_i$ be the corresponding quadric hypersurfaces.
Suppose these $Q_i$ intersect transversally in $\Cc\Pp^5$, then their
complete intersection $X$ is a K3 surface. On the other hand, let
$A_i$ be the symmetric $6 \times 6$ matrix corresponding to the
quadratic form $q_i$. Then a quadric $\{a_0 q_0 + a_1 q_1 +a_2 q_2=0\}$ is
smooth if and only if the matrix $a_0 A_0 + a_1 A_1 +a_2 A_2$ is
regular. Hence, their singular members are parameterized by the
degree $6$ curve $D:=\{\det(a_0 A_0 + a_1 A_1 +a_2 A_2)=0\}$ in
$\Cc\Pp^2$ (where $a_0, a_1 ,a_2$ are variables). The double cover
ramified along $D$ is a K3 surface, and we denote it by $Y$.

\medskip

We further assume that every quadric containing $X$ is of rank $\geq
5$. Let $h \in H^2(X,\Zz)$ be the cohomology class of hyperplane
sections of $X$, then the moduli space of stable (with respect to
polarization $X \to \Cc\Pp^5$) rank $2$ vector bundle with $c_1 = h$
and $c_2 = 4$ is canonically isomorphic to $Y$. By  \cite[Section
5.5]{Cal00a}, there exists an $\alpha \in Br(Y)$ in the
Brauer group of $Y$ such that $D^b(X) \cong D^b(Y,\alpha)$.

\medskip

This example is a particular case of Kuznetsov's construction
for $k=4$ and can consequently be reconstructed by our method.
The reader can either follow the discussion of Section
\ref{2222}
or the description below along the lines of Section \ref{sectioncomb}.

\medskip
Let $\Delta$ be the polytope which is the convex hull of $\{e_1,
\ldots, e_5, e_6\}$, where $e_i, 1 \leq i \leq 5$ are standard basis
of $\Zz^5$, and $e_6 = -\sum_{i=1}^5 e_i$. The normal fan of
$\Delta$ is the fan of $\Cc\Pp^5$. Let $\Delta_j = \Conv\{e_{2j-1},
e_{2j}, \mathbf{0}\}~ \forall~ 1 \leq j \leq 3$. Then
$\{\Delta_i \mid 1 \leq i \leq 3\}$ is a nef-partition of
$\Delta$.

\medskip
We can define a reflexive Gorenstein cone associated to $\Delta_i$ by
\[K^\vee =: \{(a, b, c; a\Delta_1 + b \Delta_2 + c\Delta_3) \mid a, b, c \in  \Rr_{\geq 0}\}\subset  \Rr^8. \]  This is exactly how one can associate a reflexive Gorenstein cone to a nef-partition (see \cite{BB94}). The $(2,2,2)$-complete
intersection $X$ is just the complete intersection
defined by decomposition
$\deg^\vee = (1,0,0;\mathbf{0}) + (0,1,0;\mathbf{0})+
(0,0,1;\mathbf{0})$).

\medskip

On the other hand, $\deg^\vee$ can also be presented by
\[
\deg^\vee = \frac{1}{2}(s_1 + \cdots +s_6),
\] where
\[
\begin{split}
&s_1 = (1,0,0;e_1), \quad s_2 =(1,0,0;e_2)\\
&s_3 = (0,1,0;e_3), \quad s_4 =(0,1,0;e_4)\\
&s_5 = (0,0,1;e_5), \quad s_6 =(0,0,1;e_6).\\
\end{split}
\] Then $\Zz^8 \cap (\sum_{i=1}^6 \Rr  s_i) = \Zz  \deg^\vee
+ \sum_{i=1}^6 \Zz  s_i$, hence
\[\overline{N}_{free}=\overline{N} = \Zz^8/(\Zz \deg^\vee +
\sum_{i=1}^6 \Zz  s_i).\] Again, let $\Theta$ be the image of the
$K_{(1)}^\vee$ in $\overline{N} \cong \Zz^2$. Then $\Theta$ is a
convex hull of $\{v_1:=\overline{(1,0,0;\mathbf{0})},
v_2:=\overline{(0,1,0;\mathbf{0})},
v_3:=\overline{(0,0,1;\mathbf{0})}\}$, because all the other
vertices of $K^\vee_{(1)}$ is zero in $\overline{N}$. Moreover,
because $(1,0,0;\mathbf{0}) + (0,1,0;\mathbf{0}) +
(0,0,1;\mathbf{0}) = 2 \deg^\vee$, we have $v_1 + v_2 + v_3 =0$.
Thus, the normal fan of $\Theta$ is exactly the fan for $\Cc\Pp^2$.

\medskip

We can define $g =  \xx^{-2\deg}\det((g_{i,j})_{i,j}) $ as above,
where $g_{i,j}$ is the Laurent polynomial constructed from
$T_{i,j}$. By Theorem \ref{dual polytope}, $g$ is a global section
of $H^0(\Cc\Pp^2,
\Oo(-2K_{\Cc\Pp^2}))$, hence of degree $6$. The Calabi-Yau variety
$Y$ in the construction is exactly the double cover
ramified along the sextic ${D}: = \{g=0\}$.

\medskip
\subsection{Double mirrors of Enriques surfaces.}\label{222tau}
We recall that Enriques surfaces are quotients of certain K3
surfaces by a fixed point free involution. One of the many
constructions of these surfaces is provided by the quotients of
$(2,2,2)$ complete intersections in $\Cc\Pp^5$.
Specifically, we need to consider an action of an involution on
$V=\Cc^6$ that has trace $0$ and take a complete intersection of
three invariant quadrics on $\Pp V$. The involution fixes this
surface and acts freely on it. The resulting quotient surface is
Enriques.

\medskip
The corresponding Gorenstein cones are given as in Section
\ref{2222tau} as follows. We
have
\[
N=\Big((\bigoplus_{i=1}^6 \Zz s_i + \frac 12 \Zz (s_1+s_2+s_3))\oplus \bigoplus_{j=1}^3\Zz t_j\Big)/ \Zz(\sum_{i=1}^6s_i-2\sum_{j=1}^3t_j).
\]
The cone $K^\vee$  is the image of the nonnegative orthant.
The dual lattice $M$ is given by
\[
M=\{\sum_{i=1}^6 a_i s_i^\vee + \sum_{j=1}^3 b_j t_j^\vee \mid \sum_{i=1}^6a_i=2\sum_{j=1}^3b_j~{\rm and~} \sum_{i=1}^3a_i {\rm ~is~even}\}.
\]
The cone $K$ is the intersection of $M$ with the
nonnegative orthant.

\medskip
The usual Kuznetsov's double mirror is the sheaf of Clifford
algebras over $\Cc\Pp^2$ whose center is the double cover of
$\Cc\Pp^2$ ramified at the union of two elliptic curves $E_+$ and
$E_-$ which are written as determinants of symmetric $3\times 3$
matrices of linear forms. The action of the involution means that we
need to consider the corresponding Clifford algebra over the gerbe
$[\Cc\Pp^2/\Zz_2]$.

\medskip
As in Section \ref{2222tau}, we consider the semidirect product of
the Kuznetsov's sheaf of Clifford algebras  over
$\Cc\Pp^2$ and the group ring $\Cc[h]/\la h^2-1\ra$ of $\Zz_2$. Over
the generic point of $\Cc\Pp^2$, after diagonalization of the
quadratic forms, we get the even part of the quotient of the free
algebra over the field of rational functions $F$ on $\Cc\Pp^2$
$$
F\{y_1^+,y_2^+,y_3^+, y_1^-,y_2^-,y_3^-,h\}
$$
by the two-sided ideal generated by the relations
$$
h^2-1, hy_i^+ - y_i^+ h, hy_i^- + y_i^- h,
(y_i^+)^2+c_i^+,(y_i^-)^2+c_i^-,
y_i^+y_j^- + y_j^-y_i^+
$$
for all $i$ and $j$ and
$$
y_i^+y_j^+ + y_j^+y_i^+,y_i^-y_j^- + y_j^-y_i^-
$$
for $i\neq j$.
Again $c_i^\pm\in F$
may not be assumed to be $1$, since $F$ is not algebraically closed. In fact, up to
squares, the products $\prod_{i=1}^3c_i^\pm$ give equations of $E_\pm$.

\medskip
The calculation of the center of the above algebra is done analogously to Section \ref{2222tau}
but yields a different result. In fact, one simply gets $F$ as the center.

\begin{remark}\label{log}
Some more delicate preliminary calculations seem to indicate that the structure of the
double mirror of an  Enriques surface is that of (smooth) $\Zz_2\times \Zz_2$ root stack over $\Cc\Pp^2$, ramified over the union of two elliptic curves $E_+$ and $E_-$, presumably with a Brauer element.
The corresponding orbifold Euler characteristics calculation is as follows. The complement of
the union of two elliptic curves has $\chi=12$. The elliptic curves
contribute nothing, since the ages of the corresponding involutions are $\frac 12$ so the contribution
of the twisted sector cancels that of the untwisted sector. Similarly,  the contributions of the $9$
intersection points are zero, since the $4$ sectors cancel each other. Thus the Euler characteristics
of the root stack matches that of the Enriques surface. We thank Howard Nuer for pointing out a likely relationship
between these double mirrors and the construction of Enriques surfaces as logarithmic transformations of elliptic surfaces in \cite[p.599]{GH}.
\end{remark}

\subsection{Calabrese-Thomas' example.}

We state the construction of Calabrese and Thomas' first example in
\cite{CT14}.\footnote{The second example of \cite{CT14} falls into
the framework of Batyrev-Nill double mirrors from Section
\ref{section2}.} Let $V,W$ be complex vector spaces of dimension
$3$, and $\Pp(V \oplus W) = \Cc\Pp^5$. Let \[\pi: Z:= {\rm
Bl}_{\Pp(V)}(\Cc\Pp^5) \to \Cc\Pp^5\] be the blowup along $\Pp(V)$
with exceptional divisor $E$. Then one can show that
$\pi^*\Oo_{\Cc\Pp^5}(3)(-E)$ is a base point free line bundle. Let
$f_1, f_2$ be two general global sections of
$\pi^*\Oo_{\Cc\Pp^5}(3)(-E)$. Let \[X: = \{f_1 = f_2 = 0\} \subseteq
Z\] be their complete intersection. Since the anticanonical
bundle of $Z$ is equal to
$\pi^*\Oo_{\Cc\Pp^5}(6)(-2E)$, we see that $X$ is a Calabi-Yau
variety by the adjunction formula.

\medskip

Let $\rho: Z \to \Pp(W)$ be the projection from the plane $\Pp(V)
\subseteq \Cc\Pp^5$ to $\Pp(W)$, then one can show that the universal
hypersurface $\mathcal{H}:=\{x_1f_1 + x_2f_2 = 0\} \subseteq Z
\times \Cc\Pp^1$ is a quadric fibration over $Y = \Pp(W) \times \Cc\Pp^1$
under the morphism $\rho \times {\rm id}$. In particular, this quadric
fibration corresponds to an even Clifford algebra sheaf
$\Bb_0$.  As a consequence of the relative version of
homological projective duality, there is a derived equivalence
$D^b(X) \cong D^b(Y, \Bb_0).$

\medskip

One can modify the right hand side of the equivalence by considering
the relative Fano scheme of lines of $\rho \times id: \mathcal{H}
\to \Pp(W) \times \Cc\Pp^1$ (see~\cite{Kuz14}). Since $\rho \times {\rm id}$
is a quadric fibration, we can consider the loci $D'$ where the
rank of the quadratic form on the fibre is not of full rank (i.e.
$<4$). Then one can show that $D'$ is a $(6,4)$-bidegree divisor on
$Y = \Pp(W) \times \Cc\Pp^1 \cong \Cc\Pp^2 \times \Cc\Pp^1$. Let $F \to  \Pp(W)
\times \Cc\Pp^1$ be the relative Fano scheme of lines of $\rho \times
{\rm id}$, and $F \to  X' \to Y$ be its Stein
factorization, then $X' \to  Y$ is a double cover
ramified along $D'$. Finally, let $X'' \to X'$ be some small
resolution (in the analytic category), then there exists an $\alpha
\in Br(X'')$, such that \[D^b(X) \cong D^b(Y,
\Bb_0) \cong D^b(X'',\alpha).\]

\medskip

The reconstruction of this example is similar to the above cases, but a little bit
involved:

\medskip
We start by considering a $\Zz^5$ with the usual $\Cc\Pp^5$
fan on it, namely the one with
$e_1=(1,0,0,0,0),...,e_5=(0,0,0,0,1),e_6=(-1,...-1)$. Then the
blowup of $\Pp(V) \cong \Pp^2$ introduces an additional vertex
$e_0=(1,1,1,0,0)$. Let $\Delta$ be the convex hull of $\{e_0,
\cdots, e_6\}$, then it has a nef-partition with
\[
\Delta_1 = \Conv(\{{\bf 0},e_0,e_1,e_2,e_6\}), \quad \Delta_2 =
\Conv(\{{\bf 0},e_3,e_4,e_5\}).
\] Let $K^\vee \subseteq \Rr^7$ be the reflexive cone defined by
\[
K^\vee:= \{(a, b ; a\Delta_1 + b \Delta_2 \mid a, b \in \Rr_{\geq 0})\} \subseteq \Rr^7.
\]

\medskip

Again, $X$ is the complete intersection associated to $\deg^\vee =
(1,0;{\bf{0}}) + (0,1; {\bf{0}})$, and one can verify that the nef divisors
associated to $\Delta_i, i=1,2$ are both linearly equivalent to
$\pi^*\mathcal{O}_{\Cc\Pp^5}(3)(-E)$ in Calabrese-Thomas' example.

\medskip

Next, we write
\[
\deg^\vee = \frac{1}{2}(s_1 + s_2 + s_3 +s_4),
\] where
\[
s_1=(1,0;e_0), \quad s_2 = (1,0;e_6), \quad s_3 = (0,1;e_4), \quad s_4 = (0,1;e_5).
\] One can show that \[\Zz^7 \cap  \sum_{i=1}^4\Rr\cdot s_i  = \Zz \cdot \deg^\vee + \sum_{i=1}^4 \Zz \cdot s_i,\] hence
\[
\overline N_{free}=\overline{N} = \Zz^7/(\Zz \cdot
\deg^\vee + \sum_{i=1}^4 \Zz \cdot s_i).
\] Let $\Theta$ be the image of $K_{(1)}^\vee$ under the quotient map. Recall that $K_{(1)}^\vee$ has $9$ generators
\[
\begin{split}
&(1,0;\mathbf{0}), \quad (1,0;e_0),\quad (1,0;e_1),\quad (1,0;e_2),\quad (1,0;e_6)\\
&(0,1; \mathbf{0}),\quad (0,1; e_3),\quad (0,1;e_4),\quad (0,1;e_5).
\end{split}
\] After taking the quotient, $4$ of them disappear, and the other $5$ forms a fan of $Y = \Pp(W) \times \Cc\Pp^1 \cong \Cc\Pp^2 \times \Cc\Pp^1$. Specifically,
\[
\begin{split}
&(0,1;{\mathbf 0}) + (1,0;{\mathbf 0}) = \deg^\vee ~(= 0 \in \overline{N}),\\
&(1,0;e_1) + (1,0;e_2) + (0,1;e_3) = \deg^\vee + (0,1;e_0) ~(= 0 \in \overline{N}).
\end{split}
\]

By a computer search, there are $96$ lattice points in $K_{(1)}$
which fall into $10$ classes depending on the pairing with $s_i$. By
pairing with
$$\overline{(1,0;{\bf 0})}, \overline{(1,0; e_1)},
\overline{(1,0; e_2)} \in \overline{N},$$
we can find out the degree of
these classes in $\Cc[\overline{N}]$. It turns out that the
determinantal equation $g$ is exactly of bidegree $(6,4)$. Hence the
double cover of $Y$ ramified along $D'=\{g=0\}$
gives the Calabi-Yau variety $X'$ which coincides with the
construction in Calabrese-Thomas' example.

\begin{remark}
Calabrese and Thomas consider analytic small resolutions of the singular double
cover, similar to \cite{Add09}. In contrast, our construction gives a
noncommutative algebraic resolution
by a sheaf of Clifford algebras. We also remark that the flatness assumption is satisfied in
Calabrese-Thomas' example.
\end{remark}

\subsection{An example with $k=1$.}\label{subsection: a first non-trivial example}
As we mentioned before, even when the index $\langle \deg, \deg^\vee \rangle
$ is equal to $1$, there may exist double mirrors in our construction, in contrast
to the Batyrev's original construction~\cite{Bat94}. In the
following, we will give an example of such type in a $2$ dimensional
lattice. The resulting Calabi-Yau varieties are (elliptic) curves,
therefore the Brauer class has to be trivial.
This could be viewed as an almost trivial example of our
construction,  but it is informative to work it out in detail.

\medskip

We consider the $2$ dimensional reflexive polytope \[\Delta=\Conv\{(1,1),(1,-1),(-1,-1),(-1,1)\} ,\] whose dual polytope is
\[\Delta^\vee =
\Conv\{(1,0),(0,-1),(-1,0),(0,1)\}.\]

Then we have a pair of reflexive Gorenstein cones
\[
K = \{(a;a\cdot\Delta) \mid a \geq 0\} \subset M_\Rr,\quad K^\vee =
\{(b;b\cdot\Delta^\vee) \mid b \geq 0\} \subset N_\Rr.
\] The picture of these cones is given in Section \ref{subsection: Reflexive Gorenstein cones}. We can decompose $\deg^\vee$ in two different ways:
\[
\deg^\vee = (1;\mathbf{0}) = \frac{1}{2}(s_1 + s_2),
\] where $s_1 = (1,-1,0), s_2 = (1,1,0)$.

\medskip

Let $X$ be the elliptic curve defined by the decomposition
$\deg^\vee = (1;\mathbf{0})$. This is exactly the Batyrev-Borisov variety as we just take the complete intersection and do not need to take the double cover. It is a hypersurface associated to the
anticanonical divisor in the toric variety $\Cc\Pp^1 \times \Cc\Pp^1$. Hence, a global section $f$ of the
anticanonical divisor can be identified with $\{\sum a_v \xx^v \in
\Cc[M]\mid v \in \nabla\}$.
Since a smooth curve is uniquely determined up to birational
equivalence, $X$ is uniquely determined by its intersection with
$(\Cc^*)^2 \subset \Cc\Pp^1 \times \Cc\Pp^1$.
Let
\[\begin{split}
f ~=~ &a_{11}x^{-1}y + a_{12}y + a_{13}xy\\
&+ a_{21}x^{-1} + a_{22} + a_{13}x\\
&+ a_{31}x^{-1}y^{-1} + a_{32}y^{-1} + a_{33}xy^{-1}
\end{split}
\]
be the equation of $X$ in $(\Cc^*)^2$.

\medskip

The projection $(\Cc^*)^2 \to \Cc^*$ induces a projection $X \cap
(\Cc^*)^2 \to \Cc^*$ given by $(x,y) \mapsto y$. Because $f=0$ is
the same as $xf=0$ on $(\Cc^*)^2$, and
\[\begin{split}
xf ~=~ &(a_{13}y + a_{23} + a_{33}y^{-1})x^2\\
&+ (a_{12}y + a_{22} + a_{32}y^{-1})x\\
&+ (a_{11}y + a_{21} + a_{31}y^{-1}),
\end{split}
\] this projection is a degree $2$ morphism which is ramified along
the discriminant\[(a_{12}y + a_{22} + a_{32}y^{-1})^2 - 4(a_{11}y +
a_{21} + a_{31}y^{-1})(a_{13}y + a_{23} + a_{33}y^{-1})=0.\]

\medskip

Next, we will construct the double mirror $X'$ associated to
$\deg^\vee = \frac{1}{2}(s_1 + s_2)$.

\medskip

Because $\overline{N} = \Zz^2 / \Zz^2 \cap(\Rr s_1 + \Rr s_2)  \cong
\Zz$, and $\Theta  = \overline{K_{(1)}^{\vee}} = \Conv(-1,1)$, the
toric stack $\Pp_\Theta$ is actually the
smooth toric variety $\Cc\Pp^1$. One can find $T_{i,j}$ from
$K_{(1)}$ by pairing with $s_i$. Specifically,
\[\begin{split}
&T_{1,1}\cap M=\{(1,-1,1),(1,-1,0),(1.-1,-1)\},\\
&T_{1,2}\cap M= T_{2,1}\cap M = \{(1,0,1),(1,0,0),(1,0,-1)\},\\
&T_{2,2} \cap M= \{(1,1,1),(1,1,0),(1,1,-1)\}.
\end{split}
\] Hence a global section $g$ in $H^0(\Cc\Pp^1, \Oo(-2K_{\Cc\Pp^1}))$
is \[g=x^{-2} \cdot \det\left(
\begin{array}{cc}
 g_{11}& g_{12}  \\
g_{21}& g_{22}
 \end{array} \right)
\] where
\[\begin{split}
&g_{11} = a_{11}xy^{-1}z + a_{21}xy^{-1} +
a_{31}xy^{-1}z^{-1}\\
&g_{12} = \omega_{21}=\frac{1}{2}(a_{12}xz + a_{22}x
+a_{32}xz^{-1})\\
&g_{22} = a_{13}xyz + a_{23}xy + a_{33}xyz^{-1}.
\end{split}
\] It is straightforward to compute that \[g = (a_{11}z + a_{21} +
a_{31}z^{-1})(a_{13}z + a_{23} + a_{33}z^{-1}) - \frac{1}{4}(a_{12}z
+ a_{22} +a_{32}z^{-1}).\] Hence, the ramification loci (i.e.
$g=0$) on $\Cc\Pp^{1}$ is exactly the same as those of $X$ on
$\Cc^* \subseteq \Cc\Pp^1$. This shows that $X,X'$ are isomorphic
elliptic curves. In particular $D^b(X) \cong D^b(X')$.

\medskip
Of course,  this isomorphism is not unexpected and in fact follows
from the results of our paper as follows. As a consequence of
Theorem \ref{theorem: derived equivalence}, the double mirrors
should have equivalent derived categories. It is well known that derived equivalent smooth
projective curves are isomorphic \cite[Corollary 5.46]{Huy06}.
However, it is worthwhile to see this isomorphism explicitly.

\begin{remark}
One can view this example as the $k=2$ case of Kuznetsov's
construction for complete intersections
of two quadrics in $\Cc\Pp^3$, where we fix one of the quadrics and
view it as a toric variety $\Cc\Pp^1\times \Cc\Pp^1$.
\end{remark}

\subsection{An example of double mirrors without a central fan.}
Let us consider the reflexive Gorenstein cone $K^\vee$ in $\Rr^3$
with the usual lattice $\Zz^3$
generated by
$$
(-1,-1,1),(2,-1,1),(-1,2,1).
$$
The dual cone $K$ is generated by
$$
v_1=(1,0,1),v_2=(0,1,1),v_3=(-1,-1,1)
$$
in $\Zz^3$.
The degree elements are given by
$$
\deg=(0,0,1), ~\deg^\vee=(0,0,1).
$$
The coefficient function $c:K_{(1)}\to \Cc$ is determined by four values
$$
c_1=c(v_1),~c_2=c(v_2),~c_3=c(v_3),~c_0=c(\deg).
$$
The corresponding hypersurface is the elliptic curve whose open part (in the big torus)
is given by the equation
\begin{equation}\label{ell1}
c_1x + c_2y + c_3x^{-1}y^{-1}+c_0 = 0.
\end{equation}
We can think of these curves as the mirrors to the usual elliptic curves in $\Cc\Pp^2$.

\medskip
Consider now the decomposition
$$
\deg^\vee = \frac 12 (0,-1,1) + \frac 12(0,1,1).
$$
On the one hand, the points $s_1=(0,-1,1)$ and $s_2=(0,1,1)$ are in $K^\vee_{(1)}$. On the other
hand, there is no simplicial fan in $K^\vee$ which has all maximum dimensional cones that
contain both of these points. As such, the construction of Section \ref{defcliff} does not apply.

\medskip
Nevertheless, we may consider the construction of Section \ref{sectioncomb}.
We sort the points of $K_{(1)}$ according to their pairings with $s_1$ and $s_2$
and take their convex hulls to get polytopes $T_{ij}$. We get
$$
T_{1,1}=  \{v_3\},~
T_{1,2}=T_{2,1} = {\rm Conv} \{v_1,\deg\} ,~
T_{2,2} =\{v_2\}.
$$
The polytope $S=T$ is then given by
$$
\frac 12 {\rm Conv}\Big( v_3+v_2, 2v_1, v_1+\deg, 2\deg\Big)-\deg
={\rm Conv}\Big((-\frac 12,0,0), (1,0,0)\Big).
$$
The corresponding Calabi-Yau double cover of the toric variety $\Cc\Pp^1$
that corresponds to $2T$ is  given by the determinant of the symmetric  matrix
$$
\det
\left(
\begin{array}{cc}
c_3 x^{-1}y^{-1} & \frac 12 (c_1 x + c_0) \\
 \frac 12 (c_1 x + c_0)& c_2 y
\end{array}
\right)
$$
which is, up to an invertible element, equal to
$$
-4c_2c_3x^{-1} + (c_1x+c_0)^2.
$$
It remains to observe that the elliptic curve
$$
z^2= - 4c_2c_3x^{-1} + (c_1x+c_0)^2
$$
is isomorphic to the one given in \eqref{ell1} under a change of
variables $z=2c_2 y  +c_1x +c_0$.

\begin{remark}
It appears that in $k=1$ case the commutative variety that underlies
the Clifford double mirror is always birational to the original
toric hypersurface. Indeed, the set $K_{(1)}$ has
``width two" and one can look at the corresponding double cover
structure on the hypersurface. Moreover, in $k=1$ (or more generally
$r=1$ case) the even part of Clifford algebra of a two-dimensional
space is commutative, so one simply gets a double cover.
\end{remark}

\subsection{An example without flatness assumption.}\label{noflatness}
In this subsection we discuss a case where the flatness assumption of Remark \ref{flatornot}
does not hold.

\medskip
The variety in question is the Clifford double mirror of the $n$-dimensional
Calabi-Yau hypersurface $H$ of bidegree $(2,n+1)$ in $\Cc\Pp^1\times \Cc\Pp^n$ for $n\geq 3$.
The corresponding reflexive Gorenstein cone $K^\vee$ in the lattice
$$\Zz^2\oplus (\bigoplus_{i=1}^{n+1} \Zz e_i/\Zz \sum_{i=1}^{n+1}e_i)
$$
is generated by
$$
(1,-1;\mathbf{0}),~(1,1;\mathbf{0}),~(1,0;e_i),~ i=1,\ldots,n+1.
$$
The degree element is given by
$\deg^\vee=(1,0;\mathbf{0})$ and is the only
non-vertex lattice point in $K^\vee_{(1)}$. The dual cone $K$ in the
lattice
$$
\Zz^2\oplus \{\sum_{i=1}^{n+1}a_i e_i^\vee, \sum_i a_i=0\}
$$
is generated by
$$(1,-1;(n+1)e_i^\vee-\sum_{j=1}^{n+1}e_j^\vee),~(1,1;(n+1)e_i^\vee-\sum_{j=1}^{n+1}e_j^\vee)
$$
for $i=1,\ldots, n+1$. The degree element is $\deg ={(1,0;\mathbf{0})}$ so the index of the pair of
cones is $k=1$. The lattice points in $K_{(1)}$
$$
(1, j; \sum_{l=1}^{n+1} a_l e_l^\vee
-\sum_{l=1}^{n+1}e_l^\vee),~-1\leq j\leq 1, a_l\geq 0,
\sum_{l=1}^{n+1}a_l= n+1
$$
correspond to monomials $u_1^{j+1}u_2^{1-j} \prod_{l=1}^{n+1} v_l^{a_l}$
for the homogeneous coordinates $(u_1:u_2,v_1:\cdots:v_{n+1})$ on $\Cc\Pp^1\times\Cc\Pp^n$.
The coefficient function $c:K_{(1)}\to \Cc$ encodes the coefficients of the defining equation
of $H$.

\medskip
The Clifford limit corresponds to the decomposition
$$
\deg^\vee = \frac 12 (1,-1;\mathbf{0}) + \frac 12 (1,1;\mathbf{0}).
$$
There is a fan $\Sigma$ which satisfies \eqref{central2} whose
maximum dimensional cones are spanned by $(1,-1;\mathbf 0), (1,1;\mathbf{0})$ and all but
one of $(1,0;e_i)$. The construction of $(\Ss,\Bb_0)$ shows that
$\Ss=\Cc\Pp^n$.

\medskip
To describe $\Bb_0$ observe that $H\to \Cc\Pp^n$ has fibers that are not two disjoint points
over the locus which is the determinant of a symmetric $2\times 2$
matrix of degree $n+1$ forms on $\Cc\Pp^n$. This is a hypersurface $D\subset\Cc\Pp^n$
of degree $(2n+2)$ which is singular at the locus of codimension $3$ in $\Cc\Pp^n$.
The sheaf of algebras
$\Bb_0$ is in this case commutative and is a pushforward of the structure sheaf of the
double cover $H_1$ of $\Cc\Pp^n$ ramified at $D$.
While $H$ is a small resolution of this cover, the map $H\to H_1$ is not an isomorphism
under our assumption $n\geq 3$.

\medskip
In this case, the Clifford double mirror construction produces the
derived category of coherent sheaves on $H_1$. It is not equivalent
to that of $H$ because of the singularities of $H_1$. The reason for
this failure is rooted in the absence of flatness condition of
Remark \ref{flatornot}. It fails precisely over the locus where the
$2\times 2$ matrix is identically zero, which is a complete
intersection of three hypersurfaces of degree $n+1$ in $\Cc\Pp^n$.
The Kuznetsov's theorem is thus inapplicable, so the argument of
Section \ref{sectionderived} falls apart.

\begin{remark}
We believe that even without the flatness or the central fan assumption, there is some
kind of Calabi-Yau geometry associated to the decomposition of $\deg^\vee$ with
coefficients $1$ and $\frac 12$. However, its precise nature appears more complicated.
This is another reason why the primary object of interest is the derived category
$D_B(K,c;\Sigma)$ for a regular simplicial fan $\Sigma$ in $K^\vee$.
\end{remark}

\section{Concluding remarks and open questions.}\label{sectionrem}
There are non-toric double mirror examples which this paper does not address. For example the Pfaffian-Grassmannian double mirrors in \cite{Rod00}, whose derived equivalence is established in \cite{BC09}; the Hosono-Takagi's construction of Reye congruence \cite{HT14}, whose derived equivalence is established in \cite{HT13}. Note also the construction of Bak and Schnell \cite{Bak,Schnell}
related to the Gross-Popescu Calabi-Yau varieties \cite{GP}.
Besides, our method may not be able to cover some examples on derived equivalence between elliptic fibration and its relative Jacobian, see \cite{Cal00a} for discussions on this direction.

\begin{remark}\label{toextend}
Much of the string-theoretic machinery that exists for commutative varieties and even for
DM stacks does not exist in the general setting of noncommutative varieties. This paper
is an indication that it should be possible to define a class of such mildly noncommutative
smooth varieties, that would include smooth DM stacks and to try to extend the following
definitions to this class.
\begin{itemize}
\item
Non-linear sigma models with noncommutative targets.
\item
Gromov-Witten and Donaldson-Thomas invariants.
\item
String-theoretic Hodge numbers (to generalize orbifold Hodge numbers)
\item
Chiral de Rham complexes.
\end{itemize}
\end{remark}

\begin{remark}
Perhaps the most important takeaway from our paper could be the definition of the derived
category associated to reflexive Gorenstein cones. It clearly indicates the need for
better understanding of the theories with potentials. One can pose and try to answer
the same questions as in Remark \ref{toextend} in this setting.
\end{remark}

\begin{remark}
It would be interesting to see if there is a Berglund-H\"ubsch-Krawitz analog of our construction.
\end{remark}

\begin{remark}
We believe that the double mirror construction for Enriques surfaces
in Section \ref{222tau} needs to be investigated further along the lines of Remark \ref{log}.
\end{remark}

\end{document}